\documentclass[12pt,twoside,reqno]{amsart}
\pdfoutput=1

\usepackage[main=english]{babel}
\usepackage[autostyle,english=american]{csquotes}
\MakeOuterQuote{"}

\usepackage{amssymb,mathtools}
\usepackage{bm}
\usepackage{xcolor}
\usepackage{dirtytalk}

\usepackage[
  top=1.4in,
  bottom=1.4in,
  left=2.5cm,
  right=2.5cm,
  headheight=0pt,
  headsep=0pt
]{geometry}
\usepackage{amsthm,thmtools,amsmath}

\usepackage[style=numeric-comp,url=false,isbn=false,maxnames=6, backend=biber]{biblatex}
\usepackage[final]{graphicx}
\graphicspath{{/img}}
\usepackage{subcaption}
\usepackage{tikz}
\usetikzlibrary{math}
\usetikzlibrary{babel,cd,shapes,3d,arrows,decorations.markings}
\tikzcdset{arrow style=math font}
\usetikzlibrary{decorations.pathmorphing,shapes}
\usetikzlibrary{decorations.pathreplacing}
\usepackage[dvipsnames]{xcolor}

\tikzset{
    cross/.style=
    {cross out, draw, solid, thin, 
    minimum size=1.4*(#1-\pgflinewidth), 
    inner sep=0pt, outer sep=0pt, rotate=20},
    cross/.default={3}
} 

\tikzset{
    circlecross/.style= 
    {circle, minimum width=8pt, draw, inner sep=0pt, path picture={\draw (path picture bounding box.south east) -- (path picture bounding box.north west) (path picture bounding box.south west) -- (path picture bounding box.north east);}},
    circlecross/.default={3}
} 

\tikzset{
    mid arrow/.style=
    {postaction={decorate,decoration={markings,mark=at position .5 with {\arrow[xshift=2pt,#1]{stealth}}}}},
} 

\tikzset{
    crosshor/.style=
    {cross out, draw, solid, thin, 
    minimum size=1.4*(#1-\pgflinewidth), 
    inner sep=0pt, outer sep=0pt, rotate=0},
    crosshor/.default={3}
}

\usepackage[shortlabels]{enumitem}

\usepackage[final]{hyperref}
\usepackage[noabbrev,capitalize]{cleveref}
\creflabelformat{equation}{#2\textup{#1}#3}
\hypersetup{
  pdfcreator = {},
  pdfproducer = {}
}

\declaretheorem[numberwithin=section]{theorem}
\declaretheorem[sibling=theorem]{lemma, proposition, corollary}
\declaretheorem[sibling=theorem,style=definition]{definition}
\declaretheorem[sibling=theorem,style=remark]{remark, example}

\newtheorem{thmINTRO}{Theorem}

\newcommand\pmat[1]{\begin{pmatrix}#1\end{pmatrix}}

\def\C{\mathbb{C}}

\def\N{\mathbb{N}}

\def\R{\mathbb{R}}

\def\Z{\mathbb{Z}}

\def\ca{{\mathcal A}}

\def\cc{{\mathcal C}}
\def\cd{{\mathcal D}}
\def\ce{{\mathcal E}}
\def\cf{{\mathcal F}}

\def\co{{\mathcal O}}

\def\ct{{\mathcal T}}

\newcommand{\symp}{\operatorname{Symp}}
\newcommand{\Interior}{\operatorname{Int}}
\newcommand{\ham}{\operatorname{Ham}}
\newcommand{\Stair}{\operatorname{Stair}}

\newcommand{\Sp}{\operatorname{Sp}}
\newcommand{\vol}{\operatorname{Vol}}

\newcommand{\vis}{\mathrm{vis}}
\newcommand{\sect}{\mathrm{sec}}
\newcommand{\can}{\mathrm{can}}
\newcommand{\std}{\mathrm{st}}
\newcommand{\sing}{\mathrm{sing}}
\newcommand{\eq}{\mathrm{eq}}

\newcommand{\ATF}{\mathfrak{A}}

\begin{document}

\author[]{Nikolas Adaloglou}
\address{Nikolas Adaloglou, 
    imj-prg, 
    Sorbonne Université et Université Paris Cité, CNRS}
\email{adaloglou@imj-prg.fr} 

\author[]{Johannes Hauber}
\address{Johannes Hauber,
    imj-prg, 
    Sorbonne Université et Université Paris Cité, CNRS}
\email{johannes.hauber@unine.ch} 

\date{\today}

\title{Quantitative symplectic topology of Katok's examples and equivariant symplectic embeddings}

\begin{abstract}
    The Katok examples on $S^2$ are induced by Randers metrics obtained by perturbing the round metric by the standard rotational Killing field:
    allowing a scaling factor of the round metric gives a two-parameter family of Randers metrics $F_{\alpha,\beta}$, parametrised by positive real numbers $(\alpha,\beta)$.
    We compute the set of all $(\alpha,\beta)\in (0,2+\sqrt{3})^2$ for which $D^*(S^2,F^*_{\alpha,\beta})$, the unit codisc bundle with respect to $F^*_{\alpha,\beta}$, symplectically embeds into the round codisc bundle $D^*S^2$.
    This set has the structure of an infinite staircase.
    We also establish a dictionary between embeddings of these codisc bundles, singular $A_1$-ellipsoid embeddings, and $\Z_2$-equivariant ellipsoid embeddings, showing that these embedding problems are equivalent.
    Furthermore, we prove the analogous results for the $\R P^2$ case and discuss a broader family of examples for which this dictionary applies.
\end{abstract}

\maketitle
\pagestyle{plain}

\section{Introduction}
\label{sec:intro}

Symplectic embedding problems have been one of the central themes in symplectic geometry ever since Gromov's seminal paper \cite{Gro85}.
Already for seemingly very simple set-ups, such as computing the set of all symplectic ellipsoids that embed into a ball, these problems are very rich in structure.
For a thorough discussion and the relevant background see \cite{Sch18}.
The purpose of this paper is to show that the same phenomenon appears naturally in a different geometric setting: embedding problems for unit codisc bundles of Finsler metrics that induce Katok's examples \cite{Ka74}, and equivariant symplectic embeddings of ellipsoids into balls.

\vspace{4pt}
More concretely, from the Finsler perspective we study the two-parameter family of Randers metrics on $S^2$, obtained by perturbing a scaled round metric by the standard rotational Killing field.
These are the classical examples introduced by Katok.
From the symplectic viewpoint, a Finsler metric is encoded by its unit codisc bundle, and the co-geodesic flow is the Reeb flow on the boundary.
For two positive real numbers $(\alpha,\beta) \in \R_{>0}^2$ we define the Finsler metric $F_{\alpha,\beta}$ of Katok's examples.
Here, the parameters $\alpha$ and $\beta$ measure the two oriented equatorial widths of the dual unit ball, as explicated in \cref{rmk:ab_meaning} and shown in \cref{fig:codisc_Katok}.
In particular $F^*_{\alpha,\alpha}=\tau / \alpha \lvert\cdot\rvert_{g_0^*}$, where $\tau=2\pi$ and $g_0^*$ denotes the dual of the round metric on the sphere $S^2 \subseteq \R^3$ of radius~$1$.

\begin{figure}[htb]
  \begin{center}   
    \begin{tikzpicture}[scale=0.8]
    
    \begin{scope}[shift={(-8,0)}]
        \node at (-1.9,2.4) {(a)};
        \node at (4.6,2) {\small $D^*_{q}\left(S^2,F_{\alpha,\beta}^*\right)$};
        \draw[opacity=0.5,->]
            (0,-2.5) -- (0,2.5);
        \draw[opacity=0.5,->]
            (-2,0) -- (5.5,0);
        \filldraw[fill=lightgray, fill opacity=0.75, draw=black] (3/2,0) ellipse (5/2 and 2);
        \node at (0,2.5) [left, opacity=0.5] {\footnotesize $p_\theta$};
        \node at (5.5,0) [above, opacity=0.5] {\footnotesize $p_\phi$};
        \node[xshift=-7, yshift=-5] at (-1,0) {\tiny $-\alpha$};
        \node[xshift=4, yshift=-6] at (4,0) {\tiny $\beta$};
        \draw [decorate, decoration={brace, amplitude=5pt, raise=0.2ex}] (0.02,0.01) -- (3/2-0.02,0.01) node[midway, yshift=2.8ex]{\footnotesize $\frac{\beta-\alpha}{2}$};
        \draw [decorate, decoration={brace, amplitude=5pt, raise=0.2ex}] (3/2-0.02,-0.01) -- (-1+0.02,-0.01) node[midway, yshift=-2.8ex]{\footnotesize $\frac{\beta+\alpha}{2}$};
        \draw [decorate, decoration={brace, amplitude=5pt, raise=0.2ex}] (3/2+0.01,2-0.02) -- (3/2+0.01,0.02) node[midway, xshift=3.2ex]{\tiny $\sqrt{\beta\alpha}$};
        \draw[thick] (-1,-0.07) -- (-1,0.07);
        \draw[thick] (4,-0.07) -- (4,0.07);
        \draw (-1,0) -- (4,0);
        \draw (3/2,2) -- (3/2,-2);
        \fill (3/2,0) circle[radius=1pt];
    \end{scope}

    \begin{scope}[shift={(4,0)}]
        \node at (-2.4,2.4) {(b)};
        \node at (2.5,1.7) {\small $D^*_{q}\Big(S^2,F_{\alpha,\beta}^*\Big)$};
        \filldraw[fill=lightgray, fill opacity=0.75, draw=black] (0,0) ellipse (8/5 and 8/5);
        \node at (0,2.5) [left, opacity=0.5] {};
        \node at (2.5,0) [above, opacity=0.5] {};
        \draw (0,-8/5) -- (0,8/5);
        \draw [decorate, decoration={brace, amplitude=5pt, raise=0.2ex}] (0.01,8/5-0.02) -- (0.01,-8/5+0.02) node[midway, xshift=4.2ex, yshift=-.1ex]{\footnotesize $\frac{4\alpha\beta}{\tau(\alpha+\beta)}$};
        \fill (0,0) circle[radius=1pt];
    \end{scope}
    
    \end{tikzpicture}
    \caption{(a) The codisc $D^*_{q}(S^2,F_{\alpha,\beta}^*)$ at a point $q\in S^2$ on the equator, where $F_{\alpha,\beta}$ is the Finsler metric inducing Katok's example. Here $p_\theta$ and $p_\phi$ are the cotangent coordinates to the volume normalised colatitude $\theta$ and longitude $\phi$. (b) The codisc $D^*_{q}(S^2,F_{\alpha,\beta}^*)$, where $q$ is either the north or the south pole. Note that the factor $\tau^{-1}$ appears because the figure is shown in an $g_0^*$-orthonormal frame. For the exact formulas see \cref{rmk:ab_meaning}}
    \label{fig:codisc_Katok}
  \end{center}
\end{figure}

\vspace{4pt}
Recall that $T^*S^2$ carries a canonical exact symplectic form $\omega_\can=d\lambda_\can$.
Therefore, the unit codisc bundles $D^*\left(S^2,F^*_{\alpha,\beta}\right) \subseteq T^*S^2$ are Liouville domains.
We ask whether 
$$
    D^*\left(S^2,F^*_{\alpha,\beta}\right) \xhookrightarrow{\raisebox{-0.5ex}{\( \hspace{0.5em}\scriptstyle s \hspace{0.5em} \)}} D^*\left(S^2,\tau \lvert\cdot\rvert_{g_0^*}\right),
$$ 
i.e.\ we wish to determine the set:
\begin{equation}\label{eq:E211_Intro}
    \ce_{2;1,1}:=\left\{(\alpha,\beta) \in \R_{>0}^2 \,\middle|\, D^*\left(S^2,F^*_{\alpha,\beta}\right) \xhookrightarrow{\raisebox{-0.5ex}{\( \hspace{0.5em}\scriptstyle s \hspace{0.5em} \)}} D^*\left(S^2,\tau \lvert\cdot\rvert_{g_0^*}\right) \right \}.
\end{equation}
Here and in the sequel, $\xhookrightarrow{\raisebox{-0.5ex}{\( \hspace{0.5em}\scriptstyle s \hspace{0.5em} \)}}$ means "there exists a symplectic embedding".\footnote{The convention that we will use is that for a closed domain $K \subseteq (M,\omega)$ of a symplectic manifold, a symplectic embedding $\iota:K \hookrightarrow (X,\sigma)$ means that it is the restriction of a symplectic extension $\hat{\iota}$ of $\iota$, i.e.\ that there exists a symplectic embedding $\hat{\iota}:U \hookrightarrow (X,\sigma)$, where $K \subseteq U$ is an open neighbourhood such that $\hat{\iota}|_K=\iota$.}
We will show in \cref{sec:Background} that $D^*\left(S^2,F^*_{\alpha,\beta}\right)$ is symplectomorphic to a natural almost toric domain $E_{2;1,1}(\alpha,\beta)$, a "$(2;1,1)$-ellipsoid", and then show  in \cref{sec:A211=E211} that $\ce_{2;1,1}$ coincides with the set 
$$
\ca_{2;1,1}:=\left\{(\alpha,\beta) \in \R_{>0}^2 \,\middle|\, E_{2;1,1}(\alpha,\beta) \xhookrightarrow{\raisebox{-0.5ex}{\( \hspace{0.5em}\scriptstyle s \hspace{0.5em} \)}} \C P^1 (1) \times \C P^1(1)\right \} \subseteq \R_{>0}^2,
$$
where $\C P^1 (1)$ is the $2$-sphere of area $1$.

\begin{thmINTRO}[\cref{sec:A211=E211}]
\label{thm:Intro_E211=A211}
    We have $\ce_{2;1,1}=\ca_{2;1,1}$.
\end{thmINTRO}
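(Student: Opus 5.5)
The plan is to compare both sides through a single geometric fact: the round codisc bundle $D^*(S^2,\tau\lvert\cdot\rvert_{g_0^*})$ is the complement of the diagonal-type divisor in $\C P^1(1)\times\C P^1(1)$. Concretely, $\C P^1(1)\times\C P^1(1)$ minus the antidiagonal $\bar\Delta=\{(x,-x)\}$ is symplectomorphic to the open round codisc bundle of the appropriate size. The standard map sends $(x,y)\mapsto$ (midpoint direction, covector determined by $x-y$). Equivalently, in almost toric language, the moment polytope of $\C P^1\times\C P^1$ after a nodal trade at one corner and a mutation is exactly the base diagram of $D^*S^2$ with the antidiagonal as the boundary at infinity. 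I will fix the scaling so that the open codisc bundle $D^*_{<1}(S^2,\tau\lvert\cdot\rvert_{g_0^*})$ embeds with full volume, i.e.\ with complement the sphere $\bar\Delta$.

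With this, the inclusion $\ce_{2;1,1}\subseteq\ca_{2;1,1}$ is immediate. Using the symplectomorphism $D^*(S^2,F^*_{\alpha,\beta})\cong E_{2;1,1}(\alpha,\beta)$ from \cref{sec:Background}, any embedding into the round codisc bundle composes with the embedding of the round codisc bundle into $\C P^1(1)\times\C P^1(1)$. A compact subset of the closed bundle lands in a slightly larger open one, so I first shrink $(\alpha,\beta)$ slightly and then use that both sets are open, or more precisely defined by conditions stable under scaling and with the closed-domain convention from the footnote.

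The reverse inclusion $\ca_{2;1,1}\subseteq\ce_{2;1,1}$ is the main obstacle. Given an embedding $\iota:E_{2;1,1}(\alpha,\beta)\hookrightarrow\C P^1(1)\times\C P^1(1)$, I need to isotope it off a symplectic sphere in the antidiagonal class $A-B$. Since $A-B$ has zero area, I instead use the diagonal class $A+B$; the correct model is the complement of a $(1,1)$-curve, i.e.\ a sphere of self-intersection $2$. The strategy has three steps.

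\begin{enumerate}
\item Show that the image $\iota(E_{2;1,1})$ misses some embedded $J$-holomorphic sphere in class $A+B$ for a suitable $J$. Choose $J$ that is standard near the image after stretching the neck along the boundary of a slightly smaller copy. The moduli space of $(1,1)$-spheres through a generic point is nonempty for every tame $J$, since the Gromov–Witten invariant counting them through $3$ points is $1$. Neck-stretching along $\partial E_{2;1,1}$, which has a Reeb flow whose orbits are known from the Katok dynamics, shows that such a sphere either avoids the domain or breaks. Breaking would force a holomorphic building with positive-index negative ends, and index/area bookkeeping rules this out because the area of $A+B$ is $2$ while the action of the relevant Reeb orbits exceeds the remaining budget.
\item Once a symplectic $(1,1)$-sphere $\Sigma$ disjoint from the image is found, use Gromov–McDuff uniqueness of symplectic $(1,1)$-spheres in $S^2\times S^2$ up to symplectic isotopy. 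Extending the isotopy to an ambient Hamiltonian isotopy moves $\Sigma$ to the standard diagonal, whose complement is the round codisc bundle.
\item Conclude that $\iota$ composed with this isotopy lands in the open round codisc bundle, giving $(\alpha,\beta)\in\ce_{2;1,1}$.
\end{enumerate}

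The delicate point is step 1. If the neck-stretching bookkeeping fails, I would instead first use the toric/almost toric structure: $E_{2;1,1}(\alpha,\beta)$ contains a visible Lagrangian or divisor structure, and one can try a "packing-relative" argument in which the embedding is first made to avoid a generic $(1,1)$-curve by positivity of intersections and area considerations.
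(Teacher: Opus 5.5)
Your outline has the same architecture as the paper. For $\ca_{2;1,1}\subseteq\ce_{2;1,1}$ you produce a symplectic $(+2)$-sphere in $X\setminus\iota(E_{2;1,1}(\alpha,\beta))$, move it to the diagonal by Gromov's uniqueness, and use $X\setminus\Delta\cong\Interior D^*(S^2,\tau\lvert\cdot\rvert_{g_0^*})$. The converse is Liouville rescaling. But your step 1 is the whole content of the theorem (\cref{thm:quadric_complement}), and the mechanism you give for it does not work.

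\emph{Area does not help.} The simple Reeb orbits on $\partial E_{2;1,1}(\alpha,\beta)$ are the two oriented equators, of actions $\alpha$ and $\beta$. These can be arbitrarily small compared with $\omega(A+B)=2$, so a top level with several negative ends can have positive area.

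\emph{Fredholm index does not help either.} This is unlike $\C P^2$ minus a standard ellipsoid, where every orbit has $\mu_{\mathrm{CZ}}\ge 3$. Use a trivialization extending over $T^*S^2$, which has $c_1=0$. In it, the equatorial orbit of action $\beta$ has transverse rotation number $\tfrac12(1+\beta/\alpha)$, so for $\beta<\alpha$ its Conley--Zehnder index is $1$. Consider a genus-zero top-level curve carrying all of $c_1(A+B)=4$ with $s$ negative ends on this orbit. Its index is $-(2-s)+8-s=6$ for every $s$, exactly what is needed to pass through your three points. The planes in $T^*S^2$ capping it off have index $0$. So neither area nor index excludes the breaking you need to exclude.

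What is missing is an intersection-theoretic input that sees how the curve meets the core. In the minimal-resolution picture (with $F$ the fibre class), such a top level compactifies to the class $2F+(1-s/2)\Sigma_-$. Its arithmetic genus is $-s^2/4$, which is \eqref{eq:adj_C} with $d_0=1$, so $s=0$. Making this rigorous needs a relative adjunction/writhe analysis or the pavilion compactification of \cref{sec:A211_staircase}, plus control of multi-component limits. Your fallback via positivity and area does not supply it.

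The paper does not stretch $(1,1)$-curves at all. It proceeds as follows:
\begin{enumerate}
\item It stretches class-$A$ curves in the pavilion blow-up and uses adjunction to force $\bm{\xi}=\bm{e}_{i_\sect}$.
\item This yields the regulation with broken fibres $\cf_1,\cf_2$ (\cref{thm:existence_ruling} and \cref{thm:structure_regulation}).
\item It pulls back $F_1+\Sigma_-+F_2$ and smooths it inside $\nu(\ct)$, away from $\cc$, by \cref{thm:neigh_divisor_smoothing}.
\item Connectedness and adjunction ($S^2=2$, $c_1\cdot S=4$) then make this a $(+2)$-sphere in $V$.
\end{enumerate}

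Two minor points. First, your opening paragraph swaps diagonal and antidiagonal. The set $\{(x,-x)\}$ is the Lagrangian zero section, of class $A-B$; it is the complement of the diagonal that is the codisc bundle. Second, for $\ce\subseteq\ca$, shrinking $(\alpha,\beta)$ goes the wrong way. Instead, extend to $E_{2;1,1}(\alpha+\varepsilon,\beta+\varepsilon)$ and conjugate by the Liouville flow into $B_{2;1,1}(\lambda)$ with $\lambda<1$, as the paper does.
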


Recall that performing the symplectic cut that collapses the characteristics on the contact boundary $\partial D^*(S^2,\tau \lvert\cdot\rvert_{g_0^*})$ yields $\C P^1 (1) \times \C P^1(1)$.
This means that in order to show the equality $\ca_{2;1,1}=\ce_{2;1,1}$ it suffices to show that in the complement of a symplectic embedding $E_{2;1,1}(\alpha,\beta) \hookrightarrow \C P^1 (1) \times \C P^1(1)$ there always exists an embedded symplectic  $(+2)$-sphere.
This is shown by neck-stretching.
Classical arguments of Gromov \cite{Gro85} then show the uniqueness of such spheres in $\C P^1 (1) \times \C P^1(1)$ up to symplectic isotopy, which implies that we obtain an embedding 
$$
    D^*\left(S^2,F^*_{\alpha,\beta}\right) \xhookrightarrow{\raisebox{-0.5ex}{\( \hspace{0.5em}\scriptstyle s \hspace{0.5em} \)}} D^*\left(S^2,\tau \lvert\cdot\rvert_{g_0^*}\right).
$$

In order to formulate the symplectic staircase result we define the sequence
\begin{equation}
\label{eq:211_seq}
    \{m_i\}_{i \in \Z} \vcentcolon=\{\ldots,153,41,11,3,1,1,3,11,41,153,\ldots\}
\end{equation}
by the recursion $m_{i+2}=4m_{i+1}-m_i$ with initial conditions $m_0=1$ and $m_1=1$.
In particular, we have 
\begin{equation*}
    \lim_{i \to +\infty} \frac{m_{i+1}}{ m_i}= \lim_{i \to -\infty} \frac{m_{i}}{ m_{i+1}}=2+\sqrt{3}.
\end{equation*}
Define
\begin{equation*}
    \square_i(2;1,1):= \left(0,\frac{m_{i+1}}{m_i}\right) \times \left(0,\frac{m_{i}}{m_{i+1}}\right)
    \quad\text{and}\quad 
    \text{Stair}(2;1,1):=\bigcup_{i\in \Z} \square_i(2;1,1).
\end{equation*}

\begin{thmINTRO}[\cref{sec:A211_staircase}]
\label{thm:Intro_staircase}
    We have $\ca_{2;1,1} \cap (0,2+\sqrt{3})^2=\Stair(2;1,1)$.
\end{thmINTRO}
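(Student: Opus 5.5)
The plan is to prove the two inclusions $\Stair(2;1,1)\subseteq\ca_{2;1,1}$ (constructions) and $\ca_{2;1,1}\cap(0,2+\sqrt{3})^2\subseteq\Stair(2;1,1)$ (obstructions) separately. Both will be worked in the almost toric picture of $E_{2;1,1}(\alpha,\beta)$ from \cref{sec:Background}. As a preliminary reduction, I will use the elementary monotonicity $E_{2;1,1}(\alpha',\beta')\subseteq E_{2;1,1}(\alpha,\beta)$ for $\alpha'\le\alpha$, $\beta'\le\beta$. This reduces the construction side to showing that, for each $i\in\Z$, every $(\alpha,\beta)$ in the open box $\square_i(2;1,1)$ lies in $\ca_{2;1,1}$, i.e. to embedding a slightly shrunk copy of $E_{2;1,1}(m_{i+1}/m_i,\,m_i/m_{i+1})$. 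By the $\alpha\leftrightarrow\beta$ symmetry (which reverses the index $i\mapsto 1-i$, since $m_{1-i}=m_i$) it suffices to treat $i\ge 0$.

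\emph{Constructions.} I will start from the standard toric picture of $\C P^1(1)\times\C P^1(1)$, a unit square, and introduce nodal trades at two corners. Then I will perform a sequence of nodal slides and mutations. One checks inductively that the mutated almost toric bases are quadrilaterals whose edge data satisfy the recursion $m_{i+2}=4m_{i+1}-m_i$: the monodromy around a $(2;1,1)$-type corner acts by a matrix of trace $4$. The key claim is that after $i$ mutations, one vertex of the base is a corner with a nodal ray whose local model is exactly the base of $E_{2;1,1}(\alpha,\beta)$, with $\alpha=m_{i+1}/m_i$ and $\beta=m_i/m_{i+1}$. Note that these two edge lengths have product $1$, which matches the volume constraint $\alpha\beta<1$. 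Applying the almost toric embedding lemma (a region of the base minus a neighbourhood of the rest of the singular fibres lifts to a symplectic embedding) then gives the embedding of every slightly smaller $E_{2;1,1}$, hence all of $\square_i(2;1,1)$.

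\emph{Obstructions.} It remains to show that no point of $(0,2+\sqrt{3})^2$ outside the staircase is realised. Such a point lies above one of the inner corners $(m_{i+1}/m_i,\,m_{i+1}/m_{i+2})$, i.e. it satisfies $\alpha\ge m_{i+1}/m_i$ and $\beta\ge m_{i+1}/m_{i+2}$ for some $i$. I will use the dictionary announced in the abstract: an embedding $E_{2;1,1}(\alpha,\beta)\hookrightarrow\C P^1(1)\times\C P^1(1)$ lifts via the double branched cover to a $\Z_2$-equivariant ellipsoid embedding. Then I will neck-stretch along the boundary of the image. For each $i$ I will produce a rational curve $C_i$ in the complement whose homology class $(a_i,b_i)$ has $a_i,b_i$ built from consecutive $m_j$, and whose negative ends are asymptotic to multiples of the two short Reeb orbits (of actions $\alpha$ and $\beta$). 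The curve $C_i$ should come from a visible symplectic sphere or exceptional curve in the $i$-th mutated almost toric fibration (equivalently, an equivariant exceptional class in a blow-up). Positivity of symplectic area, namely $a_i+b_i > k_i\alpha + l_i\beta$, combined with the Fredholm index condition, will then show that $\alpha\ge m_{i+1}/m_i$ and $\beta\ge m_{i+1}/m_{i+2}$ cannot hold simultaneously. As a check, the resulting inequalities should cut out exactly the inner corners; this uses the identity $m_{i+1}^2-4m_im_{i+1}+m_i^2=-2$, which follows from the recursion and the initial values.

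The main obstacle is the existence of the obstructing curves $C_i$ for all $i$, with controlled asymptotics and genericity, in the presence of the orbifold/$A_1$ point. My first attempt would be to work equivariantly in the double cover and use an equivariant version of the ECH or exceptional-class machinery for ellipsoids in $\C P^1\times\C P^1$. There I would show inductively, using the mutation (Cremona-type) transformation that generates the $m_i$, that the classes remain represented by embedded exceptional spheres. The restriction to $(0,2+\sqrt{3})^2$ enters precisely here: it keeps us in the regime where these curves are the only relevant obstructions.
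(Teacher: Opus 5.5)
Your construction half is the paper's route: visible embeddings in mutated almost toric diagrams of $X=\C P^1(1)\times\C P^1(1)$, whose $A_1$-corner (both nodes on the $(1,1)$-cut) has edges of affine lengths $m_{i+1}/m_i$ and $m_i/m_{i+1}$, together with monotonicity and openness of $\ca_{2;1,1}$. There are two slips. The mutated bases are triangles with corners of determinant $2$, $m_i^2$, $m_{i+1}^2$ (see \cref{fig:first_mutations}), not quadrilaterals. And $\alpha\leftrightarrow\beta$ sends $\square_i(2;1,1)$ to $\square_{-i}(2;1,1)$, not to $\square_{1-i}(2;1,1)$.

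The obstruction half has a genuine gap.

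\textbf{The lifting step fails as stated.} $E_{2;1,1}(\alpha,\beta)$ is a smooth Liouville domain containing the Lagrangian sphere $0_{S^2}$. It is not a $\Z_2$-quotient of an ellipsoid, so an embedding into the smooth manifold $X$ does not simply lift to an equivariant ellipsoid embedding. You would first have to replace both sides by orbifold models (e.g.\ $E^\sing_{2;1,1}(\alpha,\beta)$ and $\C P(1,1,2)$), which is a further equivalence you do not prove. The paper's dictionary concerns the affine target $B_{2;1,1}(1)$. Reaching it from $X$ needs \cref{thm:Intro_E211=A211}, whose proof already uses \cref{thm:structure_regulation}. Even then it only trades one embedding problem for another. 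Your classes $(a_i,b_i)$ also live in $H_2(X)$, so it is unclear in which space you would stretch.

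\textbf{The obstructing curves $C_i$ are never pinned down.} You give neither their classes, nor the multiplicities $k_i,l_i$, nor the range of $\alpha/\beta$ on which they exist. That range is essential. A curve existing for all $(\alpha,\beta)$ gives a half-plane obstruction. A half-plane whose boundary passes through the inner corner $(m_{i+1}/m_i,\,m_{i+1}/m_{i+2})$ necessarily cuts into $\square_i(2;1,1)\cup\square_{i+1}(2;1,1)$. Proving existence for every $J$ is exactly the hard step, and "equivariant ECH or exceptional-class machinery" cannot just be invoked: $\Z_2$-invariant almost complex structures are in general not regular, so the usual counts do not run equivariantly.

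\textbf{The missing idea is the paper's key input:} uniqueness of embeddings $E_{2;1,1}(\alpha,\beta)\hookrightarrow X$ up to $\symp(X)$.
\begin{enumerate}
\item After a pavilion blow-up, one stretches curves in the class $[\C P^1\times\{\mathrm{pt}\}]$.
\item Adjunction, with $0<d_0\le 1/2$, shows the resulting square-zero curve $\widetilde C$ is embedded with $\chi=1$ and all $\mu_\nu=0$. Hence it meets only $C_{i_\sect}$ (\cref{thm:existence_ruling}).
\item $\widetilde C$ defines a regulation with section $C_{i_\sect}$ and two broken fibres (\cref{thm:structure_regulation}).
\item The gluing results of \cite{ABEHS25} then give $\Phi\in\symp(X)$ intertwining any two embeddings.
\end{enumerate}
Consequently, an embedding beyond the $i$th inner corner may be assumed to restrict to a visible embedding on a smaller ellipsoid. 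Visible symplectic curves in the spirit of \cite{McDSie25} then yield \cref{cor:obstruction_211_stair}, with no genericity or equivariant counting required.
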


This theorem and its proof build heavily on the collaboration of the authors with Brendel, Evans and Schlenk \cite{ABEHS25}.
To show \cref{thm:Intro_staircase} one shows the uniqueness of symplectic embeddings $E_{2;1,1}(\alpha,\beta) \hookrightarrow \C P^1 (1) \times \C P^1(1)$ up to symplectomorphism and then uses visible symplectic curves to obstruct embeddings, in the spirit of \cite{McDSie25}.

\vspace{4pt}
The second novel aspect of this paper is that the two embedding problems $\ce_{2;1,1}$ and $\ca_{2;1,1}$, equivalent due to \cref{thm:Intro_E211=A211}, admit two further equivalent interpretations.
The almost toric domain $E_{2;1,1}(\alpha,\beta)$ is associated to the smoothing of the $A_1$-surface singularity $\mathbb C^2/\Z_2$, which carries a natural moment map.
Thus one obtains three parallel embedding problems for
\begin{equation*}
    \text{Katok's ellipsoids} 
    \quad\longleftrightarrow\quad
    \text{$A_1$-singular ellipsoids}
    \quad\longleftrightarrow\quad
    \text{$\Z_2$-equivariant ellipsoids}.
\end{equation*}

To make this dictionary precise we denote by $\ce^\sing_{2;1,1}$ the orbifold embedding problem for the $A_1$-surface singularity, and denote by $\ce^\eq_{2;1,1}$ the corresponding $\Z_2$-equivariant ellipsoid embedding problem in $\mathbb C^2$, that are defined as follows: recall that for $(\alpha,\beta) \in \R_{>0}^2$ the capacity normalised ellipsoid $E(\alpha,\beta)\subseteq \C^2$ is defined by
\begin{equation}\label{eq:capacity_normalised_ellipsoid}
    E(\alpha,\beta)\vcentcolon=\left\{(z_1,z_2) \in \C^2\, \middle|\, \frac{\pi}{\alpha} \lvert z_1\rvert^2 + \frac{\pi}{\beta} \lvert z_2 \rvert^2\leq 1\right\} \subseteq \C^2.
\end{equation}
The singular ellipsoid $E^\sing_{2;1,1}(\alpha,\beta)$ is then defined to be the quotient of $E(2\beta,2\alpha)$ by the diagonal $\Z_2$-action, i.e.\ $(z_1,z_2)\mapsto (-z_1,-z_2)$.
Since the standard symplectic form $\omega_\std$ on $\C^2$ descends to the singular quotient, there is a well-defined notion of symplectic orbifold embeddings $E^\sing_{2;1,1}(\alpha,\beta) \hookrightarrow B^\sing_{2;1,1}(1)\vcentcolon=E^\sing_{2;1,1}(1,1)$, which roughly means that the orbifold point goes to the orbifold point and the map is a symplectic embedding on the smooth part.
We will give a precise definition in \cref{subse:orbifold_objects}.

It follows essentially from the definitions that $\ce^\sing_{2;1,1}=\ce^\eq_{2;1,1}$, see \cref{lma:eq=sing}.
Both $\ce^\eq_{2;1,1}$ and $\ce^\sing_{2;1,1}$ are introduced in \cref{sec:Background}, see \cref{subse:orbifold_objects}.

\begin{thmINTRO}[\cref{sec:E211=E211sing}]
\label{thm:Intro_equality_211}
    We have $\ce_{2;1,1}=\ca_{2;1,1}=\ce^\sing_{2;1,1}=\ce^\eq_{2;1,1}$.
\end{thmINTRO}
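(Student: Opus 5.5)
Since $\ce_{2;1,1}=\ca_{2;1,1}$ is \cref{thm:Intro_E211=A211} and $\ce^\sing_{2;1,1}=\ce^\eq_{2;1,1}$ is \cref{lma:eq=sing}, it remains to prove $\ca_{2;1,1}=\ce^\sing_{2;1,1}$. The bridge is the relation between the $A_1$-singularity and its smoothing. The quotient $\C^2/\Z_2$ carries the toric moment map induced from $(\pi|z_1|^2,\pi|z_2|^2)$. Its moment image is a cone with a non-Delzant corner of type $\tfrac12(1,1)$. The singular ellipsoid $E^\sing_{2;1,1}(\alpha,\beta)$ is the preimage of a triangle in this cone. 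The domain $E_{2;1,1}(\alpha,\beta)$ is the almost toric domain obtained by a nodal trade/rational blow-down at that corner, replacing the orbifold point by a Lagrangian $\R P^2$ neighbourhood, equivalently by a codisc bundle region of the $A_1$ Milnor fibre. Likewise $B^\sing_{2;1,1}(1)$ compactifies to the weighted projective plane $\C P(1,1,2)$. Smoothing its singular point gives $\C P^1(1)\times\C P^1(1)$: the quadric cone degenerates from the smooth quadric, and the vanishing cycle is the antidiagonal Lagrangian sphere. So I would first verify that $\C P^1(1)\times \C P^1(1)$ is obtained from the compactification of $B^\sing_{2;1,1}(1)$ by exactly the same almost toric operation that produces $E_{2;1,1}(\alpha,\beta)$ from $E^\sing_{2;1,1}(\alpha,\beta)$. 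Volume and capacity normalisations must match here, and I would check that the base triangle of $E(2\beta,2\alpha)/\Z_2$ becomes the $(2;1,1)$-base.

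\emph{Inclusion $\ce^\sing_{2;1,1}\subseteq\ca_{2;1,1}$.} Given an orbifold embedding $E^\sing_{2;1,1}(\alpha,\beta)\hookrightarrow B^\sing_{2;1,1}(1)$, I would use that it sends the orbifold point to the orbifold point. It is then a symplectomorphism near it, after shrinking slightly and using a local equivariant Darboux/normal form for the $\Z_2$ action. So one may arrange that near the singular point the map is the identity on a small standard $\Z_2$-quotient ball $B^\sing(\epsilon)$. Performing the smoothing (replacing $B^\sing(\epsilon)$ by a small neighbourhood of a Lagrangian $S^2$-type codisc, i.e.\ a rational blow-up of the $\tfrac12(1,1)$ point) simultaneously in source and target yields an embedding of $E_{2;1,1}(\alpha,\beta)$ into the smoothing of $B^\sing_{2;1,1}(1)$. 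That smoothing sits in $\C P^1(1)\times\C P^1(1)$ as the complement of the diagonal-type $(+2)$-sphere at infinity. Since the smoothing is local, and taking a limit over $\epsilon\to 0$ to absorb the shrinking (the sets are open in the relevant sense, or we use that embeddings of slightly smaller domains suffice), we obtain $(\alpha,\beta)\in\ca_{2;1,1}$.

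\emph{Inclusion $\ca_{2;1,1}\subseteq\ce^\sing_{2;1,1}$, the main obstacle.} Here I would reverse the procedure, using the zero section $L\cong S^2$ (or a Lagrangian core) of $E_{2;1,1}(\alpha,\beta)$. Given $E_{2;1,1}(\alpha,\beta)\hookrightarrow\C P^1(1)\times\C P^1(1)$, its image contains a Lagrangian sphere $\iota(L)$ with a Weinstein neighbourhood, and this neighbourhood equals the image of the small codisc region. By the neck-stretching argument from \cref{sec:A211=E211}, there is an embedded symplectic $(+2)$-sphere $\Sigma$ in the complement. By Gromov's uniqueness, there is also a symplectomorphism of $\C P^1\times\C P^1$ sending $\Sigma$ to the standard diagonal sphere at infinity. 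For the Lagrangian sphere I would invoke Hind's uniqueness of Lagrangian spheres in $S^2\times S^2$ up to Hamiltonian isotopy, relative to $\Sigma$, or in the complement $T^*S^2$ the Evans/Hind result. This normalises $\iota(L)$ to be the antidiagonal. Collapsing the antidiagonal, i.e.\ degenerating to $\C P(1,1,2)$ by the inverse of the smoothing, is a symplectic operation on the complement of $L$. Applied to both source and target, it produces an orbifold embedding of $E^\sing_{2;1,1}(\alpha',\beta')$ for all $\alpha'<\alpha$, $\beta'<\beta$. The delicate points are matching the Weinstein neighbourhood sizes, which I would handle by shrinking and exploiting openness or monotonicity in $(\alpha,\beta)$, and having a relative version of the Lagrangian uniqueness in the complement of $\Sigma$. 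Combining both inclusions with the equalities cited above gives the theorem.
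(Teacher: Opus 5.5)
Your proposal is correct and is essentially the paper's argument. The paper proves $\ce_{2;1,1}=\ce^\sing_{2;1,1}$ in three steps: it normalises orbifold embeddings to be the inclusion near the singular point (\cref{lma:normalise_sing_emb}); it normalises embeddings $E_{2;1,1}(\alpha,\beta)\hookrightarrow B_{2;1,1}(1)$ along the zero section via uniqueness of Lagrangian spheres in $D^*(S^2,\tau\lvert\cdot\rvert_{g_0^*})$ (Hind, Li--Wu, which is exactly the relative uniqueness in the complement of $\Sigma$ you flag) plus a Weinstein--Moser step (\cref{lma:normalise_cotangent}); and it then exchanges the cores $B_{2;1,1}(\varepsilon)\leftrightarrow B^\sing_{2;1,1}(\varepsilon)$, using that the moment maps agree on the complements. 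Your route through $\ca_{2;1,1}$ and the compactifications $\C P(1,1,2)$ and $\C P^1\times\C P^1$ just re-runs the $(+2)$-sphere and Gromov step of \cref{thm:Intro_E211=A211} inline before doing the same.

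Two minor slips: the core of $E_{2;1,1}(\alpha,\beta)$ is a Lagrangian $S^2$, not $\R P^2$ (that is the $\tfrac14(1,1)$ case). Also, before the Weinstein--Moser step you must fix the parametrisation of $\iota(L)$, which the paper does by pre-composing with a symplectomorphism of the domain.
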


Therefore, the infinite staircase of \cref{thm:Intro_staircase} simultaneously computes the Katok ellipsoid embedding problem, the singular embedding problem for the $A_1$-surface singularity, and the $\Z_2$-equivariant ellipsoid embedding problem.
To prove \cref{thm:Intro_equality_211}, it is enough to show the equality $\ce_{2;1,1}=\ce^\sing_{2;1,1}$. 
For this we use the nearby Lagrangian conjecture for $S^2$ to normalise symplectic embeddings $D^*(S^2,F^*_{\alpha,\beta}) \hookrightarrow D^*(S^2,\tau \lvert\cdot\rvert_{g_0^*})$ on the zero section and do the same by a Moser trick for singular embeddings. 
Having normalised the embeddings, the "cores" of the embeddings can be exchanged, proving the claim.
This is carried out in \cref{sec:E211=E211sing}.

\begin{figure}[htb]
    \centering
    \begin{tikzpicture}[xscale=1,yscale=4,scale=1.5]
            \draw[very thick, red, line cap=rect, line join=round] (8,1) -- (8.2,1) 
                node[right] {\small \textcolor{black}{$\partial \ce$}};
            \draw[very thick, Violet, line cap=rect, line join=round] (8,0.9) -- (8.2,0.9) 
                node[right] {\small \textcolor{black}{$\partial \ce^{\eq}_{2;1,1}$}};
            \draw[very thick, purple, line cap=rect, line join=round] (8,0.8) -- (8.2,0.8) 
                node[right] {\small \textcolor{black}{$\partial \ce^{\eq}_{2,1}$}};
        
            \draw[->] (0,-0.05) -- (0,1.3) node[left] {\small $\beta$};
            \draw[->] (-0.2,0) -- (9.2,0) node[below] {\small $\alpha$};

            \draw[dashed,samples=100,domain=0.8:9,variable=\x] plot ({\x},{1/\x});
            \node[anchor=south] at (4/5,5/4) {\footnotesize $\{\alpha\beta=1\}$};
        
            \foreach \a/\b/\c in {
                1/1/5,
                1/5/29,
                5/29/169,
                29/169/985,
                169/985/5741
                }{
                \draw[very thick, Purple, line cap=rect, line join=round]
                  ({\b/\a},{\a/\b}) -- ({\b/\a},{\b/\c}) -- ({\c/\b},{\b/\c});
                }
                
                \draw (5,-0.012) -- (5,0.012);
                \node[below] at (5,0) {\footnotesize $\frac{5}{1}$};
                \draw (5.82843,-0.012) -- (5.82843,0.012);
                \node[below] at (5.82843,0) {\footnotesize $3+2\sqrt{2}$};
            
                \draw (-0.04,1/5) -- (0.04,1/5);
                \node[left] at (0,1/5) {\footnotesize $\frac{1}{5}$};
                \draw (-0.04,1/5.82843) -- (0.04,1/5.82843);
        
                \draw[draw opacity=0.2] (0,1/5) -- (5,1/5) -- (5,0);
                \draw[draw opacity=0.8] (0,1/5.82843) -- (5.82843,1/5.82843) -- (5.82843,0);
            
            \foreach \a/\b/\c in {
                1/1/3,
                1/3/11,
                3/11/41,
                11/41/153,
                41/153/571
                }{
                \draw[very thick, Violet, line cap=rect, line join=round]
                  ({\b/\a},{\a/\b}) -- ({\b/\a},{\b/\c}) -- ({\c/\b},{\b/\c});
                }
                
                \draw (3,-0.012) -- (3,0.012);
                \node[below] at (3,0) {\footnotesize $\frac{3}{1}$};
                \draw (3.73205,-0.012) -- (3.73205,0.012);
                \node[below] at (3.73205,0) {\footnotesize $2+\sqrt{3}$};
            
                \draw (-0.04,1/3) -- (0.04,1/3);
                \node[left] at (0,1/3) {\footnotesize $\frac{1}{3}$};
                \draw (-0.04,1/3.73205) -- (0.04,1/3.73205);
    
                \draw[draw opacity=0.2] (0,1/3) -- (3,1/3) -- (3,0);
                \draw[draw opacity=0.8] (0,1/3.73205) -- (3.73205,1/3.73205) -- (3.73205,0);          
                    
                \draw (1,-0.012) -- (1,0.012);
                \node[below] at (1,0) {\footnotesize $\frac{1}{1}$};
                \draw (2,-0.012) -- (2,0.012);
                \node[below] at (2,0) {\footnotesize $\frac{2}{1}$};
                \draw (2.618,-0.012) -- (2.618,0.012);
                \node[below] at (2.618,0) {\footnotesize $\tau^2$};
            
                \draw (-0.04,1) -- (0.04,1);
                \node[left] at (0,1) {\footnotesize $\frac{1}{1}$};
                \draw (-0.04,1/2) -- (0.04,1/2);
                \node[left] at (0,1/2) {\footnotesize $\frac{1}{2}$};

                \draw[draw opacity=0.2] (0,1) -- (1,1) -- (1,0);
                \draw[draw opacity=0.2] (0,1/2) -- (2,1/2) -- (2,0);
                \draw[draw opacity=0.8] (0,1/2.618) -- (2.618,1/2.618) -- (2.618,0);

            \foreach \a/\b/\c in {
                1/1/2,
                1/2/5,
                2/5/13,
                5/13/34,
                13/34/89,
                34/89/233,
                89/233/610,
                233/610/1597,
                610/1597/4181,
                1597/4181/10946
                }{
                \draw[very thick, red, line cap=rect, line join=round]
                  ({\b/\a},{\a/\b}) -- ({\b/\a},{\b/\c}) -- ({\c/\b},{\b/\c});
                }
            \draw[very thick, red, line cap=rect, line join=round] (0.8,1) -- (1,1);

            \draw[very thick, red, line cap=rect, line join=round] ({(3+sqrt(5))/2},{(3-sqrt(5))/2}) -- (21/8,3/8);
            \draw[very thick, red, line cap=rect, line join=round] (21/8,3/8) -- (8/3,3/8);

            \foreach \u/\v in {
                2.666666667/2.669268137,
                2.669270992/2.670826196,
                2.670837901/2.672603940,
                2.672620947/2.674217158,
                2.675000000/2.677032961,
                2.687500000/2.692307692,
                2.692907105/2.707106781,
                2.750000000/2.822875656,
                2.833333333/9
                }{
                \draw[very thick, red, line cap=rect, line join=round,samples=50,domain=\u:\v,variable=\x] plot ({\x},{1/\x});
                }
                
            \foreach \xL/\yL/\xC/\yC/\xR/\yR in {
                2.669268137/0.374634525/2.669268293/0.374634146/2.669270992/0.374634124,
                2.670826196/0.374415977/2.670826833/0.374414977/2.670837901/0.374414336,
                2.672603940/0.374166926/2.672605791/0.374164811/2.672620947/0.374164545,
                2.674217158/0.373941210/2.674335011/0.373831776/2.675000000/0.373831776,
                2.677032961/0.373547885/2.679069767/0.372093023/2.687500000/0.372093023,
                2.692307692/0.371428571/2.692307692/0.371352785/2.692907105/0.371345895,
                2.707106781/0.369398063/2.727272727/0.363636364/2.750000000/0.363636364,
                2.822875656/0.354248689/2.823529412/0.352941176/2.833333333/0.352941176
                }{
                \draw[very thick, red, line cap=rect, line join=round] (\xL,\yL) -- (\xC,\yC) -- (\xR,\yR);
                }
    \end{tikzpicture}
    \caption{Shown are the boundary curves of the sets of all $(\alpha,\beta)$ such that a) $E(\alpha,\beta)$ symplectically embeds into $B^4(1)$, which is the well-known Fibonacci staircase \cite{McDSch12} (here $\tau$ denotes the golden ratio); b) $E(\alpha,\beta)$ embeds $\Z_2$-equivariantly into $B^4(1)$ and c) $E(\alpha,\beta)$ embeds $\Z_4$-equivariantly into $B^4(1)$.}
    \label{fig:eq_embeddings}
\end{figure}

In analogy to the $S^2$-case, all of the above can also be shown for the $\R P^2$-case.
Recall that the symplectic cut construction on the contact boundary $\partial D^*(\R P^2,\tau \lvert\cdot\rvert_{g_0^*})$ yields $\C P^2 (1)$, where $\C P^2 (1)$ denotes the complex projective plane $\C P^2$ equipped with the Fubini--Study form $\omega_\text{FS}$ that gives a complex line area $1$.
Hence, we define in analogy to the $S^2$-case
$$
\ca_{2,1}:=\left\{(\alpha,\beta) \in \R_{>0}^2 \,\middle|\, E_{2,1}(\alpha,\beta) \xhookrightarrow{\raisebox{-0.5ex}{\( \hspace{0.5em}\scriptstyle s \hspace{0.5em} \)}} \C P^2 (2)\right \} \subseteq \R_{>0}^2,
$$
where $E_{2,1}(\alpha,\beta)$ is the almost toric domain that is symplectomorphic to the Katok ellipsoid $D^*\left(\R P^2,G^*_{\alpha,\beta}\right) \subseteq T^*\R P^2$, where $G^*_{\alpha,\beta}$ is induced by $F^*_{2\alpha,2\beta}$ on the quotient.
The target of the embedding problem is chosen to be $\C P^2 (2)$ in order to accommodate a global scaling factor, because $B_{2,1}(1) \cong D^*(\R P^2, \tau/2 \lvert \cdot \rvert^*_{g_0})$, where $B_{2,1}(1)\vcentcolon=E_{2,1}(1,1)$.

The staircase structure is governed by the numerology of the Pell-branch in the Markov tree, i.e.\ by the sequence 
\begin{equation*}
    \{n_i\}_{i \in \Z} \vcentcolon=\{\ldots,169,29,5,1,1,5,29,169,\ldots\},
\end{equation*}
where the recursion is given by $n_{i+2}=6n_{i+1} - n_i$, with initial conditions $n_0=1$ and $n_1=1$, which results in
\begin{equation*}
    \lim_{i \to +\infty} \frac{n_{i+1}}{n_i}= \lim_{i \to -\infty} \frac{n_{i}}{n_{i+1}}=3+2\sqrt{2}.
\end{equation*}
Defining
\begin{equation*}
    \square_i(2,1):= \left(0,\frac{n_{i+1}}{n_i}\right) \times \left(0,\frac{n_{i}}{n_{i+1}}\right)
    \quad\text{and}\quad 
    \text{Stair}(2,1):=\bigcup_{i\in \Z} \square_i(2,1)
\end{equation*}
we can state the results for the $\R P^2$-case.

\begin{thmINTRO}
\label{thm:intro_121}
    We have $\ca_{2,1} \cap (0,3+2\sqrt{2})^2=\Stair(2,1)$ and $\ce_{2,1}=\ca_{2,1}=\ce^\sing_{2,1}=\ce^\eq_{2,1}$.
\end{thmINTRO}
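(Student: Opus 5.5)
The proof will follow the $S^2$-case step by step, replacing $\C P^1(1)\times\C P^1(1)$ by $\C P^2(2)$, the $(+2)$-sphere by a symplectic line, and the $\Z_2$-quotient by the $\Z_4$-quotient.

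\textbf{Step 1: $\ce_{2,1}=\ca_{2,1}$.} As in \cref{sec:A211=E211}, I first identify $D^*(\R P^2,G^*_{\alpha,\beta})$ with the almost toric domain $E_{2,1}(\alpha,\beta)$. This uses the $\Z_2$-quotient of the action-angle description of $D^*(S^2,F^*_{2\alpha,2\beta})$. Collapsing the characteristics on $\partial D^*(\R P^2,\tau/2\lvert\cdot\rvert_{g_0^*})$ gives $\C P^2(2)$; the complement of the image is a conic, i.e.\ a symplectic $(+4)$-sphere in the class $2[L]$. So an embedding into the codisc bundle produces an embedding into $\C P^2(2)$.

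For the converse, given $E_{2,1}(\alpha,\beta)\hookrightarrow\C P^2(2)$, I neck-stretch along the boundary of the image. The aim is an embedded symplectic sphere in class $2[L]$ in the complement. I would get it as a limit of degree-two curves through five generic points placed far from the domain, and use positivity of intersections and index arguments to exclude breaking into the domain. By Gromov's and McDuff's results, symplectic conics in $\C P^2$ are unique up to symplectic isotopy. Removing the conic then leaves a subset of $D^*(\R P^2,\tau/2\lvert\cdot\rvert_{g_0^*})$, which is what we need.

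\textbf{Step 2: the staircase.} Following \cref{sec:A211_staircase} and \cite{ABEHS25}, I would first prove uniqueness of embeddings $E_{2,1}(\alpha,\beta)\hookrightarrow\C P^2(2)$ up to symplectomorphism of the target. Then come two directions.
\begin{itemize}
\item \emph{Constructions.} For each $i$, the box $\square_i(2,1)$ is realised by an almost toric base of $\C P^2$ obtained by mutation along the Pell branch $(1,1,n)$-type triangles, $n_{i+2}=6n_{i+1}-n_i$. It contains a visible copy of $E_{2,1}(n_{i+1}/n_i,n_i/n_{i+1})$ near a pair of nodal rays.
\item \emph{Obstructions.} Outside $\Stair(2,1)$ I would use visible symplectic curves in the sense of \cite{McDSie25} at the outer corners. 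These are exceptional-type spheres that persist under the stretch and whose intersection with the domain forces area bounds. The bounds are $\beta\le n_i/n_{i+1}$ for $\alpha\ge n_{i+1}/n_i$, combined with the volume constraint, up to the accumulation point $3+2\sqrt2$.
\end{itemize}

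\textbf{Step 3: $\ce_{2,1}=\ce^\sing_{2,1}=\ce^\eq_{2,1}$.} The identity $\ce^\sing_{2,1}=\ce^\eq_{2,1}$ is the analogue of \cref{lma:eq=sing}, now with the $\Z_4$-action. For $\ce_{2,1}=\ce^\sing_{2,1}$ I mimic \cref{sec:E211=E211sing}:
\begin{itemize}
\item use the nearby Lagrangian conjecture for $\R P^2$ (known by Hind--Pinsonnault--Wu and Adaloglou) to isotope an embedding so that it is the identity on the zero section;
\item in the singular picture, normalise near the orbifold point by a Moser argument;
\item exchange the cores: a neighbourhood of $\R P^2$ versus a neighbourhood of the $\C^2/\Z_4$ point, both complements being the same $\Z_4$-quotient of a punctured ellipsoid region.
\end{itemize}

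\textbf{Main obstacle.} I expect the hardest part to be the obstruction half of Step 2. It needs the right visible curves for every step of the Pell staircase, and it needs the uniqueness of embeddings, so that these curves can be assumed to exist in the complement for every embedding, not only for the toric one.
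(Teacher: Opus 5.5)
Your Steps 2 and 3 match the paper. The staircase for $\ca_{2,1}$ is exactly \cite[Theorem 1.5.2]{ABEHS25}, which the paper simply cites, so the obstacle you anticipate is already settled. The equalities $\ce_{2,1}=\ce^\sing_{2,1}=\ce^\eq_{2,1}$ come from uniqueness of Lagrangian $\R P^2$s, \cref{lma:normalise_sing_emb} and \cref{lma:eq=sing}; both lemmas are already stated for general $(d;p,q)$.

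The genuine difference is Step 1, the conic in the complement of $E_{2,1}(\alpha,\beta)\subset \C P^2(2)$. The paper does no new curve analysis there:
\begin{enumerate}
\item It reuses the regulation of the pavilion blow-up from \cite{ABEHS25}: the $(-4)$-curve of the minimal resolution as a section, plus broken rulings.
\item This configuration blows down to a fibre--section--fibre chain in the Hirzebruch surface $\mathbb{F}_4$.
\item A divisor of square $4$ supported on that chain (e.g.\ $\Sigma_-+2F_1+2F_2$, the analogue of $F_1+\Sigma_-+F_2$) satisfies the Mu\~noz--Rojo criterion and survives the blow-ups by pullback.
\item It smooths to a connected surface disjoint from $\cc$, which adjunction forces to be a $(+4)$-sphere.
\end{enumerate}

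Stretching conics through five points instead is a legitimate route. The paper remarks that a classical neck-stretching proof also works, and your route avoids pavilions. But your phrase ``positivity of intersections and index arguments'' is the entire proof, and it is not routine:
\begin{enumerate}
\item The negative ends live on $L(4,1)=\partial E_{2,1}$. Most Reeb orbits there are non-contractible, so you need rational Conley--Zehnder indices, and these depend on $\alpha/\beta$.
\item Every line meets the Lagrangian $\R P^2$, since $[L]\cdot[\R P^2]=1$ mod $2$. So stretched lines must break, and they already produce bottom-level curves in $T^*\R P^2$ of total index $\le 0$. Index counting alone therefore cannot exclude breaking.
\end{enumerate}
You would have to combine the index count with the genus-zero and $H_1$ constraints on the building, the five-point count (to rule out line-pair type limits), a relative adjunction argument in the top level, and control of multiply covered bottom components.

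There are also three corrections.
\begin{enumerate}
\item Your opening sentence replaces the $(+2)$-sphere by a line. By the mod-$2$ intersection above, no line avoids $E_{2,1}$, so it must be the conic you actually use in Step 1.
\item In Step 2 the obstruction is mis-indexed. The analogue of \cref{cor:obstruction_211_stair} is that no embedding exists if $\alpha>n_{i+1}/n_i$ and $\beta>n_{i+1}/n_{i+2}$. Your bound ``$\beta\le n_i/n_{i+1}$ for $\alpha\ge n_{i+1}/n_i$'', even together with $\alpha\beta\le 1$, does not cut out $\Stair(2,1)$: for example, $(5.01,0.19)$ passes both but lies outside. The inner-corner obstructions alone do cut it out; the volume bound only touches the outer corners.
\item The relevant almost toric bases come from the Markov triples $(2,n_i,n_{i+1})$ on the Pell branch, with $E_{2,1}$ visible at the $\frac14(1,1)$-vertex, not from ``$(1,1,n)$'' triangles.
\end{enumerate}
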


Here the sets $\ce_{2,1}$, $\ce^\sing_{2,1}$ and $\ce^\eq_{2,1}$ are also defined in analogy to what was discussed in the $S^2$-case.
The first part of this theorem, which establishes the symplectic staircase, was proven in \cite[Theorem 1.5.2]{ABEHS25}.\footnote{Note that \cite{ABEHS25} uses a slightly different normalisation. There the target is normalised to be $\C P^2(1)$, which means that the affine problem picks up a factor $2$.}
In \cref{sec:RP2_case}, we elaborate on the differences of the $S^2$-case to the $\R P^2$-case.
\cref{fig:eq_embeddings} shows the two staircases $\ca_{2,1}$ and $\ca_{2;1,1}$ together with the Fibonacci staircase $\ce$ by McDuff and Schlenk \cite{McDSch12}.

\subsection*{Open questions}
a) These results give rise to the following natural question.
Define the generalised equivariant embedding problems
$$
    \ce^\eq_{d;p,q}:=\left\{(\alpha,\beta) \in \R_{>0}^2 \,\middle|\, E(dp^2 \beta,dp^2 \alpha) \xhookrightarrow[\smash{\raisebox{0.6ex}{$\scriptstyle \eq$}}]{\hspace{0.5em} (d;p,q) \hspace{0.5em}} B^4(dp^2) \right \},
$$
for $d\in \N_{>0}$ and $0<q\leq p$ coprime integers.
For a detailed definition see \cref{sec:Background}.
Is it true that these sets exhibit infinite staircases? 

b) The staircases that are proven in this paper fit into the family of scalar $\Z_n$-actions on $\C^2$, meaning the action $\mu \cdot (z_1,z_2)=(\mu z_1, \mu z_2)$ for $\mu \in \bm{\mu}_n$ a $n$th root of unity. 
Is this so for all $n \in \N_{>0}$, i.e.\ do all $\Z_n$-equivariant embedding problems exhibit staircase behaviour and are they governed by the recursion 
$l_{i+2}=(n+2)l_{i+1}-l_i$?
The $n=1$ case is the Fibonacci staircase \cite{McDSch12} and the cases $n=2$ and $n=4$ are proven in \cref{thm:Intro_staircase} and \cref{thm:intro_121}.

c) The staircases proven in this paper do not address embeddings beyond the accumulation point.
Hence it would be interesting to see if there are "exceptional steps" in these staircases beyond the accumulation point.
See also \cite[Remark 1.5.3.(d)]{ABEHS25}.

\subsection*{Acknowledgments}
The authors would like to thank Ziad Chaoui, Jonny Evans and especially Felix Schlenk for helpful discussions.

JH acknowledges support by the Swiss National Science Foundation Postdoc.Mobility fellowship 242282.

\section{Background}
\label{sec:Background}

\subsection{Notation}

We abbreviate $\tau:=2 \pi$. 
$\C P^2(\lambda)$ for $\lambda \in \R_{>0}$ denotes $\C P^2$ equipped with the Fubini--Study form that gives a line area $\lambda$.
$\C P^1(\lambda)$ denotes $\C P^1$ equipped with the Fubini--Study form such that the area of $\C P^1$ is equal to $\lambda$. 
We will always assume that $(\alpha,\beta)\in \R^2_{>0}$ are two strictly positive real numbers.

\subsection{Orbifold objects}
\label{subse:orbifold_objects}

Given coprime integers $0<q\leq p$ and $d\in \N_{>0}$, we denote by $\bm{\mu}_{dp^2}$ the group of $dp^2$th roots of unity and by $\Gamma_{d;p,q}$ the Kähler action of $\bm{\mu}_{dp^2}$ on $\C^2$ with weights $(1,dpq-1)$, i.e.\ the roots $\mu \in \bm{\mu}_{dp^2}$ act on $(z_1,z_2) \in \C^2$ by 
\begin{equation}\label{eq:action_weights}
    \mu\cdot (z_1,z_2) = (\mu z_1, \mu^{dpq-1} z_2).
\end{equation}
The thereby defined cyclic quotient singularity $\C^2 /\Gamma_{d;p,q}$ is called a T-singularity and is usually denoted by $\frac{1}{dp^2}(1,dpq-1)$.
We adopt the convention to drop $d$ from the notation whenever it is equal to $1$ and that, whenever not mentioned otherwise, we are in the non-trivial case, i.e.\ $dp^2\neq 1$.

In \cite{Ev23:book,Ev24:KIAS,Sym03:four_two} it is explained how the toric geometry of the Milnor fibre $B_{d;p,q}$ of a T-singularity and of the T-singularity itself are related.
We will heavily exploit the following conceptual diagram
\begin{equation}\label{eq:diagram_relations}
    \text{objects in $B_{d;p,q}$} 
    \quad\longleftrightarrow\quad
    \text{objects in $ \C^2/\Gamma_{d;p,q}$}
    \quad\longleftrightarrow\quad
    \text{$\Gamma_{d;p,q}$-equivariant objects}.
\end{equation}

\begin{remark}
\label{rmk:moment_image_singular}
    Recall from \cite[Section 3.4]{Ev23:book} that there is moment map on $\frac{1}{dp^2}(1,dpq-1)$, whose moment image $\Delta_{d;p,q}$ is the wedge spanned by the vectors $(0,1)$ and $(dp^2,dpq-1)$.
    The usual moment map $\mu:\C^2 \to \R^2$ is given by
    $$
    \mu(z_1,z_2)=(\mu_1(z_1,z_2),\mu_2(z_1,z_2))=(\pi \lvert z_1\rvert^2, \pi \lvert z_2\rvert^2).
    $$
    This map descends to the quotient, since both Hamiltonians are invariant under $\Gamma_{d;p,q}$.
    However, the period lattice generated by this map on the quotient is no longer standard.
    In order to obtain a moment map on $\frac{1}{dp^2}(1,dpq-1)$ one needs to use the map induced by
    \begin{equation}\label{eq:Momentmap_cyclic}
        \left(\mu_2,\frac{1}{dp^2}\left(\mu_1+(dpq-1)\mu_2\right)\right)
    \end{equation}
    on the quotient.
    For $\alpha,\beta \in \R_{>0}$ define the singular ellipsoids $E^\sing_{d;p,q}(\alpha,\beta)$ via their toric base diagram 
    \begin{equation}\label{eq:def_Delta}
        \Delta_{d;p,q}(\alpha,\beta):=\text{Conv}\{(0,0),\alpha(dp^2,dpq-1),\beta(0,1)\}.
    \end{equation}
    This means that $E^\sing_{d;p,q}(\alpha,\beta)=E(dp^2 \beta, dp^2 \alpha) / \Gamma_{d;p,q}$, where $E(dp^2 \beta, dp^2 \alpha)$ denotes the capacity normalised ellipsoid in $\C^2$, as defined in \eqref{eq:capacity_normalised_ellipsoid}, because $E(dp^2 \beta, dp^2 \alpha)$ is mapped to $\Delta_{d;p,q}(\alpha,\beta)$ under the map defined in \eqref{eq:Momentmap_cyclic}.
\end{remark}

\begin{example}
    The two cases that we are going to consider in this paper are (a) $(d;p,q)=(2;1,1)$ and (b) $(d;p,q)=(1;2,1)$.
    In the case (a) the T-singularity is the $A_1$-surface singularity and its smoothing $B_{2;1,1}$ is well known to be symplectomorphic to $T^*S^2$.
    Therefore, the conceptual diagram \eqref{eq:diagram_relations} takes the form
    $$
        \text{objects in $T^*S^2$} 
        \quad\longleftrightarrow\quad
        \text{objects "in" the $A_1$-singularity}
        \quad\longleftrightarrow\quad
        \text{$\Z_2$-equivariant objects}.
    $$
    In the case (b) we have that $B_{2,1}$ is symplectomorphic to $T^*\R P^2$.
\end{example}

For an orbifold $X$ we denote its underlying topological space by $\lvert X \rvert$. 
If $Y$ is another orbifold, a \textit{reduced orbifold map} $f:X \to Y$ is given by a continuous map $\lvert f \rvert:\lvert X \rvert \to \lvert Y \rvert$ which admits a smooth local lift at each $x \in \lvert X \rvert$, meaning that there are:
\begin{enumerate}
    \item 
        orbifold charts $(\widetilde{U},\Lambda_x,\phi)$ and $(\widetilde{V},\Lambda_{\lvert f \rvert (x)},\psi)$ around $x$ and $\lvert f \rvert (x)$ such that $\rvert f \lvert(U)\subseteq V$,
    \item 
        a homomorphism $\overline{f}_x:\Lambda_x \to \Lambda_{\lvert f \rvert (x)}$
    \item 
        a smooth $\overline{f}_x$-equivariant map $\widetilde{f}_x: \widetilde{U} \to \widetilde{V}$ making the obvious diagram commute.
\end{enumerate}
Here $\Lambda_x$ denotes the local group at $x$, i.e.\ the isomorphism class of the isotropy subgroup; when necessary, we use the same notation
for a representative of this isomorphism class.

These definitions follow \cite{Car19}, which builds on the classical \cite{AdLeRu07}.
In the reduced category, the particular choices of local lifts and isotropy homomorphisms are not retained as part of the data.
An \textit{unreduced} orbifold map is a reduced one that remembers this data.
A reduced orbifold map $f:X \to Y$ is an \textit{orbifold embedding} if $\lvert f \rvert: \lvert X \rvert \to \lvert Y \rvert$ is a topological embedding and it admits smooth local lifts which are smooth embeddings.
The induced isotropy homomorphisms are then automatically injective.

A \textit{symplectic orbifold embedding} is an orbifold embedding which
admits symplectic local lifts.
 Thus it restricts to an ordinary
symplectic embedding on the regular loci and is locally represented near
every orbifold point by a symplectic equivariant embedding between
uniformising charts.
For a setting very similar to ours, see \cite{HiPiWu16}.
With this notion of symplectic orbifold embedding understood we define the set:
\begin{equation}\label{eq:set_singular_ellipsoid}
    \ce^\sing_{d;p,q}:=\left\{(\alpha,\beta) \in \R_{>0}^2 \,\middle|\, E^\sing_{d;p,q}(\alpha,\beta)  \xhookrightarrow{\raisebox{-0.5ex}{\( \hspace{0.5em}\scriptstyle s \hspace{0.5em} \)}} B^\sing_{d;p,q}(1) \right \},
\end{equation}
where $\xhookrightarrow{\raisebox{-0.5ex}{\( \hspace{0.5em}\scriptstyle s \hspace{0.5em} \)}}$ means "there exists a symplectic orbifold embedding" and, as usual, the singular ball is $B^\sing_{d;p,q}(1):=E^\sing_{d;p,q}(1,1)$.

By an equivariant symplectic embedding 
\begin{equation*}
    f:U \subseteq \C^2 \xhookrightarrow[\smash{\raisebox{0.6ex}{$\scriptstyle \eq$}}]{\hspace{0.5em} (d;p,q) \hspace{0.5em}} \C^2
\end{equation*}
we mean a symplectic $\Gamma_{d;p,q}$-equivariant embedding of a closed domain $U \subseteq \C^2$ that is invariant under the action $\Gamma_{d;p,q}$, i.e.\ $f$ satisfies $f(\mu z)=\mu f(z)$ for $\mu \in \bm{\mu}_{dp^2}$ and $z \in U$, where the action $\Gamma_{d;p,q}$ is the action of $\bm{\mu}_{dp^2}$ on $\C^2$ with weights $(1,dpq-1)$, introduced in \eqref{eq:action_weights}.
Set
\begin{equation}\label{eq:set_eq_ellipsoid}
    \ce^\eq_{d;p,q}:=\left\{(\alpha,\beta) \in \R_{>0}^2 \,\middle|\, E(dp^2 \beta,dp^2 \alpha) \xhookrightarrow[\smash{\raisebox{0.6ex}{$\scriptstyle \eq$}}]{\hspace{0.5em} (d;p,q) \hspace{0.5em}} B^4(dp^2) \right \},
\end{equation}
where the ellipsoid and the ball are just the usual, capacity normalised, domains in $\C^2$, as defined in \eqref{eq:capacity_normalised_ellipsoid}.
The following lemma is a direct consequence of the definitions.

\begin{lemma}
\label{lma:eq=sing}
    We have $\ce^\sing_{d;p,q}=\ce^\eq_{d;p,q}$.
\end{lemma}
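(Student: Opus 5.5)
We prove the two inclusions separately, using the identifications $E^\sing_{d;p,q}(\alpha,\beta)=E(dp^2\beta,dp^2\alpha)/\Gamma_{d;p,q}$ and $B^\sing_{d;p,q}(1)=B^4(dp^2)/\Gamma_{d;p,q}$ from \cref{rmk:moment_image_singular}. Write $\pi:\C^2\to\C^2/\Gamma_{d;p,q}$ for the quotient map. The action $\Gamma_{d;p,q}$ is free away from the origin: $\mu\cdot(z_1,z_2)=(z_1,z_2)$ with $z_1\neq 0$ forces $\mu=1$, and with $z_2\ne0$ it forces $\mu^{dpq-1}=1$, and $\gcd(dpq-1,dp^2)=1$. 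Hence the only orbifold point of either singular domain is $[0]$, with local group $\bm{\mu}_{dp^2}$, and the uniformising chart around $[0]$ is a $\Gamma_{d;p,q}$-invariant neighbourhood of $0\in\C^2$.

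\emph{From $\ce^\eq$ to $\ce^\sing$.} Let $f$ be a $\Gamma_{d;p,q}$-equivariant symplectic embedding of a $\Gamma_{d;p,q}$-invariant neighbourhood of $E(dp^2\beta,dp^2\alpha)$ into $B^4(dp^2)$. (If the given neighbourhood is not invariant, intersect it with its translates.) By equivariance, $f$ descends to a continuous map $|\bar f|$ on the quotients. This map is injective because $f(x)=\mu f(y)=f(\mu y)$ implies $x=\mu y$, and it is a topological embedding because the quotient map is closed for a finite group. The map $f$ itself serves as the local lift at every point, with isotropy homomorphism the identity. In particular $f(0)$ is a fixed point, so $f(0)=0$ and the orbifold point goes to the orbifold point. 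So $\bar f$ is a symplectic orbifold embedding.

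\emph{From $\ce^\sing$ to $\ce^\eq$.} Let $g$ be a symplectic orbifold embedding of $E^\sing_{d;p,q}(\alpha,\beta)$ into $B^\sing_{d;p,q}(1)$.

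First we show $g([0])=[0]$. The isotropy homomorphism $\bm{\mu}_{dp^2}\to\Lambda_{g([0])}$ is injective, and the target's only nontrivial local group is at $[0]$. Since we assume $dp^2\neq1$, the local group at $[0]$ is nontrivial, so $g([0])$ must be $[0]$.

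Next we lift $g$ off the origin. Away from the origin, $\pi$ restricted to $E\setminus\{0\}$ and to $B^4\setminus\{0\}$ is a regular covering with deck group $\Gamma$. The punctured domains are homotopy equivalent to $S^3$, hence simply connected. So $g\circ\pi$ lifts to a map $\tilde g:E\setminus 0\to B^4\setminus 0$. The lift is a symplectic embedding since $\pi$ is a local symplectomorphism. It is equivariant up to an automorphism $\phi$ of $\Gamma$, meaning $\tilde g(\mu z)=\phi(\mu)\tilde g(z)$.

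Finally we handle the origin. Near $0$, the orbifold chart provides a local lift $\tilde g_0$ with homomorphism $\bar g_0$. Since $\pi$ has degree $dp^2$, both $\bar g_0$ and $\phi$ are isomorphisms. By uniqueness of lifts on the connected set $U\setminus0$, $\tilde g$ agrees with $\nu\circ\tilde g_0$ for a fixed $\nu\in\Gamma$. Therefore $\tilde g$ extends smoothly over $0$, and $\phi=\bar g_0$.

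\textbf{Main obstacle.} The remaining issue is that $\phi$ may be a nontrivial automorphism of $\bm{\mu}_{dp^2}$, whereas equivariance in \eqref{eq:set_eq_ellipsoid} means $\phi=\mathrm{id}$. We use that the symplectic local lift at $0$ intertwines two linear actions. The differential $d\tilde g_0(0)\in\Sp(4)$ is an equivariant isomorphism from $\C^2$ with weights $(1,dpq-1)$ to $\C^2$ with weights twisted by $\phi$. Hence the two representations are isomorphic. If $\phi$ is the automorphism $\mu\mapsto\mu^k$, this forces the weight multiset $\{k,k(dpq-1)\}$ to equal $\{1,dpq-1\}$ modulo $dp^2$. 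Either $k=1$, or $k=dpq-1$ with $(dpq-1)^2\equiv1$.

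In the case $(2;1,1)$ this is automatic, since $\bm{\mu}_2$ has no nontrivial automorphisms. In general, the second possibility corresponds to swapping the two factors $z_1\leftrightarrow z_2$. Composing $\tilde g$ with that swap, which is a unitary map preserving $B^4(dp^2)$ and intertwining $\phi$ with the identity, yields a genuinely $\Gamma_{d;p,q}$-equivariant embedding. The obstacle step is to check carefully that every automorphism realised in this way is covered by this correction.
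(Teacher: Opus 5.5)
Your inclusion $\ce^\eq_{d;p,q}\subseteq\ce^\sing_{d;p,q}$ and your lifting step match the paper: the origin goes to the origin, you lift over the simply connected punctured ellipsoid, obtain the twisted equivariance $\tilde g(\mu z)=\phi(\mu)\tilde g(z)$, and extend over $0$. The gap is in how you remove the twist. The differential $L=d\tilde g_0(0)$ is only real-linear, so it gives an isomorphism of \emph{real} representations. Real representations do not see the sign of a weight: on $\C$, $z\mapsto\bar z$ intertwines $\mu\mapsto\mu^{w}$ with $\mu\mapsto\mu^{-w}$. Hence $\{k,k(dpq-1)\}=\{1,dpq-1\}$ mod $dp^2$ does not follow from what you wrote. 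In particular, real representation theory never excludes $k\equiv-1$, since $(z_1,z_2)\mapsto(\bar z_1,\bar z_2)$ intertwines the standard action with the one twisted by $\mu\mapsto\mu^{-1}$.

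This already fails in the paper's second case $(1;2,1)$. There $\bm{\mu}_4$ acts by scalars and its only non-trivial automorphism is $\mu\mapsto\mu^{-1}$. The swap commutes with the scalar action, so it cannot correct this twist. What actually excludes it is that $L\in\Sp(4;\R)$: such an $L$ would be anti-complex-linear, and an anti-complex-linear map cannot preserve $\omega_\std$, because it would send the positive definite metric $\omega_\std(\cdot,i\,\cdot)$ to its negative. You name $\Sp(4)$ but never use symplecticity.

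The missing ingredient is exactly the paper's key step. Both actions are unitary, so $L\mu=\phi(\mu)L$ gives $\mu^*L^*L\mu=L^*\phi(\mu)^*\phi(\mu)L=L^*L$, i.e.\ $L^*L$ commutes with $\Gamma_{d;p,q}$. Therefore the unitary factor $U=L(L^*L)^{-1/2}\in U(2)$ of the symplectic polar decomposition satisfies $U\mu=\phi(\mu)U$. Once you have this complex-linear intertwiner, your weight-multiset argument becomes valid. The check you flagged as the remaining obstacle is then fine: either $k\equiv 1$, or $k\equiv dpq-1$ with $(dpq-1)^2\equiv 1$, and in the latter case the swap works. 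But the case analysis also becomes superfluous, because $U^{-1}\circ\tilde g$ is already a $\Gamma_{d;p,q}$-equivariant symplectic embedding into $B^4(dp^2)$. That is how the paper concludes, without classifying automorphisms.
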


\begin{proof}
    Abbreviate $N:=dp^2$ and $\Gamma=\Gamma_{d;p,q}$.
    Since every symplectic $\Gamma$-equivariant embedding $f:E(N\beta,N\alpha) \hookrightarrow B^4(N)$ descends to a symplectic orbifold embedding of the corresponding quotients by definition we have $\ce^\eq_{d;p,q} \subseteq \ce^\sing_{d;p,q}$.
    
    For the opposite inclusion assume that $f\colon E^\sing_{d;p,q}(\alpha,\beta) \hookrightarrow B^\sing_{d;p,q}(1)$ is a symplectic orbifold embedding.
    Since the orbifold point at the origin is the unique point with non-trivial isotropy, $f$ maps the singular origin to the origin. 
    Denote the ordinary covering maps induced by $\Gamma$ on the regular loci by 
    $$
        \pi_E:E(N\beta,N\alpha)\setminus \{0\} \to E_{d;p,q}^\sing(\alpha,\beta)\setminus \{0\}
        \quad\text{and}\quad 
        \pi_B:B(N)\setminus \{0\} \to B_{d;p,q}^\sing(1)\setminus \{0\}.
    $$
    Since the punctured ellipsoid $E(N\beta,N\alpha) \setminus \{0\}$ is simply connected the map $f \circ \pi_E$, admits a global lift:
    $$
    \widetilde{f}: E(N\beta,N\alpha) \setminus \{0\} \to B^4(N) \setminus \{0\}.
    $$
    By the definition of a symplectic orbifold embedding, there is also a symplectic local lift $\widetilde{f}_0$ of $f$ near the origin and an injective isotropy homomorphism $\rho:\Gamma \to \Gamma$ such that $\widetilde{f}_0(\mu z)=\rho(\mu)\widetilde{f}_0(z)$ for $\mu \in \bm{\mu}_N$, which implies that $\rho$ is an automorphism because the order of $\Gamma$ is finite.
    After composing $\widetilde{f}$ with a deck transformation, we may assume that it agrees with $\widetilde{f}_0$ on a punctured neighbourhood of the origin, and therefore it extends smoothly over the origin to a symplectic map, which we denote again by 
    $$
    \widetilde{f}: E(N\beta,N\alpha) \to B^4(N).
    $$
    Moreover, it is clear that $\widetilde{f}$ satisfies 
    \begin{equation}\label{eq:equivariance_origin}
        \widetilde{f}(\mu z)=\rho(\mu) \widetilde{f}(z)
    \end{equation}
    for $\mu \in \bm{\mu}_N$ and that $\widetilde{f}$ is a symplectic embedding because $\Gamma$ acts freely away from the origin and $\rho$ is injective.
    It remains to show that the twisting by the automorphism $\rho$ can be removed.
    Differentiating \eqref{eq:equivariance_origin} at the origin we obtain $L\mu=\rho(\mu)L$, where 
    $$L:=D\widetilde{f}(0) \in \mathrm{Sp}(4;\R).$$
    Now, we use the symplectic polar decomposition of $L$, see \cite[Remark 2.2.5]{McDSal16}.
    Since the action $\Gamma$ is unitary, both $\mu$ and $\rho(\mu)$ are unitary. 
    Define $U:=L (L^*L)^{-1/2} \in U(2)$.
    Then 
    $$
    \mu^*L^*L\mu=(L\mu)^*(L\mu)=(\rho(\mu)L)^* (\rho(\mu)L)=L^* \rho(\mu)^* \rho(\mu)L = L^*L
    $$
    so $L^*L$, and hence $(L^*L)^{-1/2}$, commutes with $\mu$, which implies
    \begin{equation}\label{eq:U_commutes}
        U\mu=L (L^*L)^{-1/2} \mu = L \mu (L^*L)^{-1/2}= \rho(\mu)U.
    \end{equation}
    Moreover, $U$ is the unitary factor in the symplectic polar decomposition of $L$, and therefore in particular preserves $B^4(N)$, and hence
    $$
    \widetilde{g}:= U^{-1} \circ \widetilde{f}: E(N\beta,N\alpha) \hookrightarrow B^4(N)
    $$
    is a well defined symplectic embedding that satisfies
    $$
    \widetilde{g}(\mu z)=U^{-1}(\widetilde{f}(\mu z))=U^{-1}\rho(\mu) \widetilde{f}(z)=\mu U^{-1} \widetilde{f}(z)= \mu\widetilde{g}(z).
    $$
    Therefore, $\ce^\sing_{d;p,q}\subseteq \ce^\eq_{d;p,q}$, finishing the proof.
\end{proof}

\begin{lemma}
\label{lma:normalise_sing_emb}
    Suppose that $(\alpha,\beta) \in \ce_{d;p,q}^\sing$.
    Then there exists a symplectic orbifold embedding $E^\sing_{d;p,q}(\alpha,\beta)  \xhookrightarrow{\raisebox{-0.5ex}{\( \hspace{0.5em}\scriptstyle s \hspace{0.5em} \)}} B^\sing_{d;p,q}(1)$ which agrees with the standard inclusion on $B^\sing_{d;p,q}(\varepsilon)$ for some small $\varepsilon > 0$.
\end{lemma}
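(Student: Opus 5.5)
By \cref{lma:eq=sing} it suffices to work equivariantly upstairs. Write $N:=dp^2$ and $\Gamma:=\Gamma_{d;p,q}$. From $(\alpha,\beta)\in\ce^\sing_{d;p,q}=\ce^\eq_{d;p,q}$ we get a $\Gamma$-equivariant symplectic embedding $\widetilde f\colon E(N\beta,N\alpha)\hookrightarrow B^4(N)$, defined on a $\Gamma$-invariant open neighbourhood. The proof of \cref{lma:eq=sing} shows $\widetilde f(0)=0$, since $0$ is the unique fixed point of $\Gamma$ on both sides. The plan has three steps: make the derivative at the origin the identity; deform $\widetilde f$ equivariantly to the identity near $0$; and check that the deformation stays inside $B^4(N)$ and descends to the quotient.

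\textbf{Step 1 (linear normalisation).} Let $L:=D\widetilde f(0)\in\Sp(4;\R)$. Equivariance gives $L\mu=\mu L$ for all $\mu\in\bm\mu_N$. As in the proof of \cref{lma:eq=sing}, the unitary polar factor $U=L(L^*L)^{-1/2}$ also commutes with $\Gamma$ and preserves $B^4(N)$. Replacing $\widetilde f$ by $U^{-1}\circ\widetilde f$, we may assume $L=P:=(L^*L)^{1/2}$. This $P$ is positive definite, symplectic, and $\Gamma$-commuting. The path $P_t:=P^{t}$, $t\in[0,1]$, consists of symplectic $\Gamma$-commuting matrices joining $\mathrm{id}$ to $P$, because $\log P$ is a $\Gamma$-invariant element of $\mathfrak{sp}$.

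\textbf{Step 2 (local equivariant isotopy).} Consider the rescalings $\widetilde f_s(z):=s^{-1}\widetilde f(sz)$ for $s\in(0,1]$, extended by $\widetilde f_0:=L$. This gives a smooth path of $\Gamma$-equivariant symplectic embeddings on a fixed small ball $B^4(2\varepsilon_0)$. Concatenate it with $P_t$ to get an equivariant symplectic isotopy from $\mathrm{id}$ to $\widetilde f$ on a small ball. The generating vector fields are $\Gamma$-invariant Hamiltonian, and on the simply connected ball the Hamiltonians $H_t$ can be chosen $\Gamma$-invariant by averaging over the finite group $\Gamma$, which acts symplectically.

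Now cut off: take a $\Gamma$-invariant bump function $\chi$ equal to $1$ on $B^4(\varepsilon)$ and supported in a slightly larger ball. The flow $\psi$ of $\chi H_t$ is a compactly supported, $\Gamma$-equivariant Hamiltonian diffeomorphism that agrees with the local isotopy, hence with $\widetilde f$, on $B^4(\varepsilon)$. Define $\widetilde g:=\psi^{-1}\circ\widetilde f$, which equals the identity on a small ball; equivalently one can use the inverse isotopy on the target side. A cleaner alternative is the standard Alexander-trick construction: glue $\widetilde f$ outside a shell to the identity inside, via an equivariant version of the extension-after-restriction principle.

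\textbf{Step 3 (main obstacle: staying inside the target).} The hard part is ensuring the modified map still lands in $B^4(N)$. For the rescaling part this is harmless, since all maps involved send the shrunken ball into a small neighbourhood of $0$. The concern is the linear part $P_t$: although $P$ need not preserve $B^4(N)$, the cutoff Hamiltonian is supported in an arbitrarily small ball around $0$, and $\widetilde f$ maps a neighbourhood of $0$ into a compact subset of $\Interior B^4(N)$. So the modification takes place entirely in a small region near the origin, well inside the interior of the target, and $\widetilde g$ remains a $\Gamma$-equivariant embedding into $B^4(N)$ that is the identity on $B^4(\varepsilon)$. Finally, $\widetilde g$ descends to a symplectic orbifold embedding $E^\sing_{d;p,q}(\alpha,\beta)\hookrightarrow B^\sing_{d;p,q}(1)$ agreeing with the inclusion on $B^\sing_{d;p,q}(\varepsilon/N)$, or on whatever radius the normalisation of the moment map \eqref{eq:Momentmap_cyclic} dictates.
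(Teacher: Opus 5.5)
Your proposal is correct and follows essentially the paper's route: polar decomposition of $D\widetilde f(0)$ with both factors commuting with $\Gamma$, the path $P^{t}$ in $\Sp(4;\R)^\Gamma$, and an equivariant Alexander trick. You make that trick explicit via the rescalings $s^{-1}\widetilde f(sz)$ and a cut-off $\Gamma$-invariant Hamiltonian, whose support in a small ball inside $B^4(N)$ ensures that $\psi^{-1}\circ\widetilde f$ still lands in $B^4(N)$. The only real difference is that you remove the unitary factor by post-composing with $U^{-1}$, which preserves $B^4(N)$, whereas the paper connects $U$ to $I$ inside $U(2)^\Gamma$; also, your bump function must equal $1$ on the images of $B^4(\varepsilon)$ under the whole local isotopy, not merely on $B^4(\varepsilon)$, which you can arrange by shrinking $\varepsilon$.
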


\begin{proof}
    By \cref{lma:eq=sing} we have $(\alpha,\beta) \in \ce_{d;p,q}^\eq$, i.e.\ there exists a $\Gamma$-equivariant symplectic embedding $f:E(N\beta,N\alpha) \hookrightarrow B^4(N)$.
    In particular, $f(\mu z)=\mu f(z)$ and $f(0)=0$ since the origin is the unique fixed point of the action.
    We again consider the symplectic polar decomposition of $L\vcentcolon= Df(0) \in \Sp(4;\R)$. 
    Since $f$ is equivariant we have $\mu L=L \mu$.\\
    The first step of the argument is to construct an isotopy in 
    $$
    \Sp(4;\R)^\Gamma\vcentcolon= \left\{A\in\Sp(4;\R) \,\mid\, A \mu = \mu A \text{ for all } \mu \in \bm{\mu}_N \right\},
    $$
    connecting $L$ to the identity $I$.
    The polar composition yields
    $$
    L=UP,\quad
    U\vcentcolon=L (L^*L)^{-1/2}\in U(2), \quad
    P=P^* >0\quad
    \text{and} \quad P\in \Sp(4;\R).
    $$
    From the proof of \cref{lma:eq=sing}, specifically \eqref{eq:U_commutes}, we also have that both $U$ and $P$ commute with $\Gamma$, i.e.\ $U \in U(2)^\Gamma$ and $P \in \Sp(4;\R)^\Gamma$.
    To connect $P$ to the identity we define $P_t\vcentcolon=P^{1-t}$ for $t \in [0,1]$, which implies $P_t\in \Sp(4;\R)^\Gamma$.
    In order to connect $U$ to the identity in $U(2)^\Gamma$ one can just use the spectral theorem so diagonalise $U$ by a unitary matrix in order to connect $U$ to $I$ in $U(2)^\Gamma$, via $U_t$.
    Now, define $L_t=U_tP_t \in \Sp(4;\R)^\Gamma$, which is the desired equivariant isotopy from $L$ to $I$.
    The second step is now to use this isotopy to normalise $f$ at the origin in an equivariant manner, using Alexander's trick.
    In this way we find a $\Gamma$-equivariant symplectic embedding
    $E(N\beta, N\alpha) \hookrightarrow B^4(N)$ that agrees with the
    inclusion on $B^4(\varepsilon)$ for some $\varepsilon >0$.
    This embedding descends to the required orbifold embedding.
\end{proof}

\subsection{Ellipsoidal domains in cotangent bundles}

From now on we specialise to two cases: (a) $(d;p,q)=(2;1,1)$ and (b) $(d;p,q)=(1;2,1)$.
Moreover, we will concentrate on the case (a) whenever the discussion for (b) is analogous and elaborate on the differences only briefly.
As in \cite{ABEHS25, AdHa25} we define ellipsoid domains in $T^*S^2$ and $T^*\R P^2$ via their almost toric base diagrams. 
We assume working familiarity with almost toric geometry as explained in \cite{Sym03:four_two,Ev23:book,Ev24:KIAS}.
Recall that the Milnor fibre of the $A_1$-surface singularity $B_{2;1,1}$ admits an almost toric base diagram $\ATF_{2;1,1}$ and similarly the Milnor fibre of the $\frac{1}{4}(1,1)$-singularity, as explained in \cite[Chapter 7]{Ev23:book} and \cite{Ev24:KIAS}.
See \cref{fig:ATF_moment_map} for the almost toric base diagrams.

\begin{definition}
    For $\alpha,\beta \in \R_{>0}$ define the $(2;1,1)$-ellipsoid $E_{2;1,1}(\alpha,\beta) \subseteq B_{2;1,1}$ as the domain associated to the almost toric base diagram $\ATF_{2;1,1}(\alpha,\beta)$, as shown in \cref{fig:ATF_Bdpq}. 
    Analogously define $E_{2,1}(\alpha,\beta) \subseteq B_{2,1}$.
\end{definition}

\begin{figure}[htb]
  \begin{center}   
    \begin{tikzpicture}[scale=0.9]
    
    \begin{scope}[shift={(-8,0)}]
        \filldraw[lightgray,opacity=0.75] (0,1.5) -- (0,0) -- (3,1.5) -- cycle;
        \draw[thick] (0,1.5) -- (0,0) -- (3,1.5);
        \draw[dashed] (0.8,0.8) node[cross] {} -- (0.4,0.4) node[cross] {} -- (0,0);
        \node at (0.3,-1) {(a)};
    \end{scope}
    
    \begin{scope}[shift={(-5.3,-1)}]
        \filldraw[lightgray,opacity=0.75] (0,1) -- (0,0) -- (3,1.5) -- cycle;
        \draw[thick] (0,1) -- (0,0) -- (3,1.5);
        \draw [decorate,decoration={brace,amplitude=5pt,raise=1ex}] (0,0) -- (0,1) node[midway,xshift=-3ex]{\footnotesize $\beta$};
        \draw [decorate,decoration={brace,amplitude=5pt,raise=1ex}] (3,1.5) -- (0,0) node[midway,xshift=1.5ex,yshift=-2.5ex]{\footnotesize $\alpha$};
        \node at (3,1.5) [right] {\footnotesize $(2 \alpha,\alpha)$};
        \draw[dashed] (0.8,0.8) node[cross] {} -- (0.4,0.4) node[cross] {} -- (0,0);
        \draw[thick,dash pattern=on 7pt off 3pt] (0,1) -- (3,1.5);
    \end{scope}
    
    \begin{scope}[shift={(0.3,0)}]
        \node at (0.3,-1) {(b)};
        \filldraw[lightgray,opacity=0.75] (0,1.5) -- (0,0) -- (6,1.5) -- cycle;
        \draw[thick] (0,1.5) -- (0,0) -- (6,1.5);
        \draw[dashed] (1,0.5) node[cross] {} -- (0,0);
    \end{scope}
    
    \begin{scope}[shift={(2.6,-1)}]
        \filldraw[lightgray,opacity=0.75] (0,1) -- (0,0) -- (6,1.5) -- cycle;
        \draw[thick] (0,1) -- (0,0) -- (6,1.5);
        \draw [decorate,decoration={brace,amplitude=5pt,raise=1ex}] (0,0) -- (0,1) node[midway,xshift=-3ex]{\footnotesize $\beta$};
        \draw [decorate,decoration={brace,amplitude=5pt,raise=1ex}] (6,1.5) -- (0,0) node[midway,xshift=1.5ex,yshift=-2.5ex]{\footnotesize $\alpha$};
        \node at (6,1.5) [right] {\footnotesize $(4\alpha,\alpha)$};
        \draw[dashed] (1,0.5) node[cross] {} -- (0,0);
        \draw[thick,dash pattern=on 7pt off 3pt] (0,1) -- (6,1.5);
    \end{scope}
    
    \end{tikzpicture}
    \caption{(a) The almost toric base diagram $\ATF_{2;1,1}$ of $B_{2;1,1}$ and $\ATF_{2;1,1}(\alpha,\beta)$ of $E_{2;1,1}(\alpha,\beta)$. The branch cut points into the $(1,1)$-direction. The almost toric base diagram $\ATF_{2;1,1}$ is unbounded, whereas $\ATF_{2;1,1}(\alpha,\beta) \subseteq \ATF_{2;1,1}$ is the closed compact subset. The contact boundary $\partial E_{2;1,1}(\alpha,\beta)$, the lens space $L(2,1)$, i.e.\ $\R P^3$, of $E_{2;1,1}(\alpha,\beta)$ lives over the thickly dashed line.  (b) The analogous diagrams in the $(2,1)$-case.}
    \label{fig:ATF_Bdpq}
  \end{center}
\end{figure}

\begin{remark}
    These ellipsoids were defined for general $(p,q)$ by the authors in \cite{AdHa25}.
    A non-squeezing theorem for them was established in \cite{AdaBaHau26} and staircase theorems for them were proven in \cite{ABEHS25}.
    In \cref{cor:211_ellipsoids} we give a very concrete description of the $(2;1,1)$-ellipsoids in terms of the Finsler metrics that define Katok's examples. 
\end{remark}

\begin{remark}
    The placement of the nodes in the almost toric base diagrams $\ATF_{2;1,1}$ and $\ATF_{2,1}$ is not important for our purposes, since the symplectic manifolds associated to almost toric base diagrams related by nodal slides are symplectomorphic, as explained in \cite{Ev23:book,Sym03:four_two}.
\end{remark}

\begin{remark}
\label{rmk:moment_map_S2}
    In volume normalised spherical coordinates, i.e.\ colatitude $\theta \in (0,1/2)$, longitude $\phi \in \R / \Z$ and $\varphi(\theta,\phi)=(\sin(\tau \theta) \cos (\tau \phi), \sin(\tau \theta) \sin(\tau \phi), \cos(\tau \theta))$, the round metric and its dual are given by
    $$
    g_0 = \tau^2(d\theta^2 + \sin^2(\tau\theta)d\phi^2) 
    \qquad\text{and}\qquad
    g_0^* = \frac{1}{\tau^2}\left( \partial_\theta^2 + \frac{1}{\sin^2(\tau \theta)} \partial_\phi^2\right).
    $$
    Therefore, the length-normalised geodesic Hamiltonian $H$, as opposed to the kinetic-energy Hamiltonian, and the angular momentum $J$ are given by 
    \begin{equation}\label{eq:H_and_J}
        H= \tau \lvert \cdot \rvert_{g_0^*}
        \qquad\text{and}\qquad
        J= p_\phi
    \end{equation}
    and the map $\mu_{S^2}=(H,J)$ defines a moment map on $T^*S^2\setminus 0_{S^2}$.
    Its moment image is given by $\mu_{S^2}(T^*S^2\setminus 0_{S^2})=\{(x,y)\mid x > 0 \text{ and } \lvert y \rvert \leq x\}$.
\end{remark}

\begin{definition}
\label{def:Fab_metric}
    For $\alpha,\beta \in \R_{>0}$ define the Randers metric $F_{\alpha,\beta}$ on $TS^2$ via its dual
    \begin{equation}\label{eq:Randers_metric}
        F_{\alpha,\beta}^*:=\frac{\alpha+\beta}{2\alpha\beta}\left(H-\frac{\beta-\alpha}{\alpha+\beta} J\right)=\frac{\alpha+\beta}{2\alpha\beta}H - \frac{\beta-\alpha}{2\alpha\beta} J,
    \end{equation}
    where $H$ is the geodesic Hamiltonian and $J$ is the angular momentum introduced in \eqref{eq:H_and_J}.
\end{definition}

\begin{remark}\label{rmk:ab_meaning}
    Notice that the parameters $\alpha$ and $\beta$ measure how far the dual unit disc extends in the two oriented equatorial directions.
    Indeed, on the boundary ray $J=H$, away from the zero section, we have
    $$
    0=p_\theta^2 + p_\phi^2\left(\frac{1}{\sin^2(\tau \theta)}-1\right),
    $$
    which implies $p_\theta=0$ and $\sin(\tau \theta)=1$, i.e.\ $\theta=1/4$, means that on the equator we have
    $F_{\alpha,\beta}^*=H/\beta$,
    while on the boundary ray $J=-H$ we have $F_{\alpha,\beta}^*=H/\alpha$.
    In particular the unit codisc intersects the angular momentum axis in the interval $-\alpha\leq p_\phi \leq \beta$.
    
    Now, fix a point $q=(1/4,\phi_0)$ on the equator.
    This means that
    $$
    H(q,p)=\sqrt{p_\theta^2 + p_\phi^2} \quad\text{and}\quad J(p,q)=p_\phi.
    $$
    The unit codisc at $q$ is therefore given by
    $$
    D_q^*(S^2, F_{\alpha,\beta}^*)=\left\{ (p_\theta,p_\phi) \,\middle|\, (\alpha + \beta) \sqrt{p_\theta^2 + p_\phi^2} - (\beta-\alpha) p_\phi \leq 2 \alpha\beta \right\}
    $$
    and rewriting the defining inequality we obtain
    $$
    \frac{p_\theta^2}{\alpha\beta} + \frac{\big(2 p_\phi-(\beta-\alpha)\big)^2}{(\alpha+\beta)^2} \leq 1,
    $$
    which means that the unit codisc is an ellipse with centre $(0,(\beta-\alpha)/2)$, $p_\theta$-semiaxis equal to $\sqrt{\alpha \beta}$ and $p_\phi$-semiaxis equal to $(\alpha+\beta)/2$.
    
    Also notice that $F_{\alpha,\beta}^*$ is a drift perturbation of a scaling of the round cometric by the rotational Killing field.
\end{remark}

\begin{remark}
    The cogeodesic flow of $F_{\alpha,\beta}$ has exactly two geometrically distinct prime closed orbits if and only if the ratio $\alpha/\beta$ is irrational, since the flow is the round cogeodesic flow composed with a constant rotation.\footnote{Note that the Reeb flow on $\partial D^*(S^2,F_{\alpha,\beta}^*)$ is the cogeodesic flow of $F_{\alpha,\beta}$ and hence this is also clear from the moment map, see \cref{fig:ATF_Bdpq} (a), in analogy to the Reeb flow on irrational ellipsoids.}
    These two closed orbits project to the equator. 
    This is why these well-known Finsler metrics were originally introduced by Katok \cite{Ka74}.
    They have since then been studied from several different perspectives, including closed geodesics \cite{Zi83,Ra04}, flag curvature \cite{Sh02}, contact geometry \cite{HaPa08}, embedded contact homology (ECH) \cite{Fer24}, and Zermelo navigation \cite{BaRoSh04}.\footnote{For a general introduction to Finsler geometry see \cite{BaChSh00} and for some of the rich and interesting history see \cite{Ber03,Ch96} and \cite[Appendix B]{AbSaSch23}.}
\end{remark}

Denote by $\widetilde{\mu}_{S^2}$ the moment map on $T^*S^2\setminus 0_{S^2}$ that is obtained from $\mu_{S^2}$, which was defined in \cref{rmk:moment_map_S2}, by applying the integral affine transformation $M$ to its moment image as shown in \cref{fig:ATF_moment_map} (a), i.e.\ $\widetilde{\mu}_{S^2}=(H-J,H)$.
Then the shape of the moment image of $\widetilde{\mu}_{S^2}$ coincides with that of $\ATF_{2;1,1}$.

\begin{figure}[htb]
  \begin{center}   
    \begin{tikzpicture}[scale=0.8]
    
    \begin{scope}[shift={(-9.5,0)}]
        \node at (-0.8,1.7) {(a)};
        \fill[lightgray,opacity=0.75] 
            (2,2) -- (0,0) -- (2,-2);
        \draw[opacity=0.5,->]
            (0,-2) -- (0,2);
        \draw[opacity=0.5,->]
            (-0.5,0) -- (2,0);
        \draw[thick, mid arrow] 
            (0,0) -- node[above left, xshift=4pt, yshift=-4pt]{\tiny $\pmat{1 \\ 1}$} (2,2);
        \draw[thick, mid arrow]
            (0,0) --  node[below left, xshift=4pt, yshift=4pt]{\tiny $\pmat{1 \\ -1}$} (2,-2);
        \draw[fill=white, line width=1pt] (0,0) circle (2pt);
    \end{scope}

    \draw[->] (-7.25,0) to[out=-20,in=200] (-4.75,0);
    \node at (-6,-0.2) [below] {\tiny $M=\pmat{1 & -1 \\ 1 & 0}$};

    \begin{scope}[shift={(-4,0)}]
        \fill[lightgray,opacity=0.75] 
            (0,2) -- (0,0) -- (2,1) -- (2,2);
        \draw[opacity=0.5,->]
            (0,-2) -- (0,2);
        \draw[opacity=0.5,->]
            (-0.5,0) -- (2,0);
        \draw[thick, mid arrow] 
            (0,0) -- node[left, xshift=2pt]{\tiny $\pmat{0 \\ 1}$} (0,2);
        \draw[thick, mid arrow]
            (0,0) --  node[below right, xshift=-4pt, yshift=2pt]{\tiny $\pmat{2 \\ 1}$} (2,1);
        \draw[fill=white, line width=1pt] (0,0) circle (2pt);
    \end{scope}

    \begin{scope}[shift={(2,0)}]
        \node at (-1.4,1.7) {(b)};
        \fill[lightgray,opacity=0.75] 
            (0,1) -- (0,-1.5) -- (6,1.5);
        \draw[opacity=0.5,->]
            (0,-2) -- (0,2);
        \draw[opacity=0.5,->]
            (-0.5,-1.5) -- (6,-1.5);
        \draw[thick] 
            (0,-1.5) -- (0,1);
        \draw[thick]
            (0,-1.5) -- (6,1.5);
        \draw[fill=white, line width=1pt] (0,-1.5) circle (2pt);
        \draw[thick,dash pattern=on 7pt off 3pt] (0,1) --node[sloped, midway, above]{\tiny $\{(\beta+\alpha)H-(\beta-\alpha)J= 2\alpha\beta\}$} (6,1.5);
        \node at (6,1.5) [right] {\tiny $(2\alpha,\alpha)$};
        \node at (0,1) [left] {\tiny $(0,\beta)$};
        \node at (0,2) [left, opacity=0.5] {\tiny $H$};
        \node at (6,-1.5) [above, opacity=0.5] {\tiny $H-J$};
    \end{scope}
    \end{tikzpicture}
    \caption{(a) The moment image of $\mu_{S^2}$ and the moment image of $\widetilde{\mu}_{S^2}$. (b) The moment image of $E_{T^*S^2}(\alpha,\beta)$ under $\widetilde{\mu}_{S^2}$.}
    \label{fig:ATF_moment_map}
  \end{center}
\end{figure}

\begin{definition}
    Define $E_{T^*S^2}(\alpha,\beta):=\widetilde{\mu}_{S^2}^{-1}(\Delta_{2;1,1}(\alpha,\beta)) \cup 0_{S^2}$, where the triangle $\Delta_{2;1,1}(\alpha,\beta)$ was defined in \eqref{eq:def_Delta}.
    As before we abbreviate $B_{T^*S^2}(\lambda)=E_{T^*S^2}(\lambda,\lambda)$ for $\lambda \in \R_{>0}$.
\end{definition}

\begin{remark}\label{rmk:BTS2}
    By definition we have that $B_{T^*S^2}(\lambda)=D^*\left(S^2,\tau / \lambda \lvert\cdot\rvert_{g_0^*} \right)$, as shown in \cref{fig:ATF_moment_map} (b).
\end{remark}

\begin{proposition}\label{prop:ETS2_E211}
    $E_{T^*S^2}(\alpha,\beta)$ and $E_{2;1,1}(\alpha,\beta)$ are symplectomorphic.
\end{proposition}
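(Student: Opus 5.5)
The plan is to compare the two sides through their almost toric fibrations. Both live inside symplectic manifolds whose integral affine bases, away from a compact core, coincide with the unbounded wedge spanned by $(0,1)$ and $(2,1)$. On $T^*S^2\setminus 0_{S^2}$ the map $\widetilde{\mu}_{S^2}=(H-J,H)$ is a genuine toric moment map with image this wedge minus its apex. On $B_{2;1,1}$ the diagram $\ATF_{2;1,1}$ has the same wedge shape, with a branch cut in the $(1,1)$-direction.

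\textbf{Step 1: identify the fibrations away from the core.} I will show there is a neighbourhood $K$ of the nodes and branch cut in $B_{2;1,1}$, and a neighbourhood $K'$ of $0_{S^2}$ in $T^*S^2$, with the following property: the complements $B_{2;1,1}\setminus K$ and $T^*S^2\setminus K'$ fibre over the same region of $\R^2$ with the same integral affine structure. By the classification of toric (Lagrangian torus fibrations with no monodromy) over a region with trivial Chern class, this gives a fibred symplectomorphism between the complements. Since both sides have contractible torus-fibred regions, Duistermaat's action–angle theorem applies and the Chern class obstruction vanishes.

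To check that the affine structures agree, I plan to nodal-slide both nodes of $\ATF_{2;1,1}$ toward the corner. Nodal slides do not change the symplectomorphism type, by the remark on node placement above. After sliding, everything outside a small neighbourhood of the apex is literally the toric wedge.

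\textbf{Step 2: extend over the core.} Here I will use the standard fact (\cite{Ev23:book}, Chapter 7) that $B_{2;1,1}$, the Milnor fibre of the $A_1$ singularity, is symplectomorphic to $T^*S^2$. This symplectomorphism can be chosen to intertwine the $S^1$-actions at infinity; concretely, $J$ corresponds to the circle action generated by the $(0,1)$-direction in $\ATF_{2;1,1}$.

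Both manifolds are then Liouville completions of the same contact boundary $\R P^3$, carrying compatible toric structure near infinity. I would apply the following argument.
\begin{enumerate}
\item Use the global identification $B_{2;1,1}\cong T^*S^2$ as a starting point.
\item Correct it at infinity so that it intertwines the $T^2$-fibrations there. The space of such corrections is governed by toric automorphisms of the wedge, which are trivial up to translation along fibres.
\item Invoke a Moser/Gromov-type uniqueness argument to isotope the result so that it restricts to the fibred map on the region $\{$the level set of the slanted edge at most some constant$\}$.
\end{enumerate}

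Alternatively, and more concretely, Symington's theorem identifies a nodal trade of the $A_1$ corner with the toric model $\C^2/\Z_2$ smoothing. The toric picture of $\widetilde{\mu}_{S^2}$ on $T^*S^2\setminus 0$ is exactly the wedge with the apex collapsed to the zero section. This is the same local model as a node at the corner with an $S^2$ vanishing cycle of two nodes, so the fibrations are isomorphic as almost toric fibrations after nodal slides.

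\textbf{Step 3: restrict to the triangles.} Once there is a symplectomorphism $\Phi\colon B_{2;1,1}\to T^*S^2$ covering the identity of the base outside a small neighbourhood of the apex, both domains are recovered as preimages. $E_{2;1,1}(\alpha,\beta)$ is the preimage of $\ATF_{2;1,1}(\alpha,\beta)$, and $E_{T^*S^2}(\alpha,\beta)$ is the preimage of $\Delta_{2;1,1}(\alpha,\beta)$ together with the zero section. These coincide outside the apex neighbourhood, which lies in the interior of both triangles, so $\Phi$ maps one domain onto the other.

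The main obstacle is Step 2: making precise that the nodal (almost toric) model near the corner is fibred-isomorphic to a neighbourhood of $0_{S^2}$ with the fibration $\widetilde{\mu}_{S^2}$. I would first try the cleanest route. Both are exact symplectic fillings containing a Lagrangian sphere, with fibrations agreeing near the boundary. I would cite the Symington/Evans identification of the $A_1$ Milnor fibre's ATF with the geodesic-flow toric structure on $T^*S^2$, and let the nodal slide remark absorb the node positions.
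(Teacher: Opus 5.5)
Your Steps 1 and 3 are fine and agree with the paper. The paper also starts from a fibred symplectomorphism $\psi$ between the complements of small cores $B_{T^*S^2}(\varepsilon_1)$ and $B_{2;1,1}(\varepsilon_1)$, obtained from action--angle coordinates \cite[Chapter 2]{Ev23:book}. The gap is Step 2, which carries the entire content of the proposition, and neither of your two routes closes it.

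The ``concrete'' route is incorrect as stated. $\widetilde{\mu}_{S^2}=(H-J,H)$ is not an almost toric fibration of $T^*S^2$: $H=\tau\lvert\cdot\rvert_{g_0^*}$ is not smooth along $0_{S^2}$, and the whole zero section is collapsed to the apex. So there are no nodes or vanishing cycles on the $T^*S^2$ side that could be matched with those of $\ATF_{2;1,1}$ by nodal slides. The two fibrations can only be compared away from a core, which is just Step 1 again.

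The other route starts from an abstract identification $B_{2;1,1}\cong T^*S^2$, which is essentially the completed version of the claim. Such a map need not respect the torus fibrations anywhere. Arranging that it is fibred near the ends requires a rigidity input, for instance control of the contactomorphism of $\R P^3$ it induces at infinity up to isotopy. Your remark that the corrections are ``governed by toric automorphisms of the wedge'' presupposes exactly this, and the ``Moser/Gromov-type uniqueness argument'' in (3) is never specified.

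What you need is a \emph{relative} statement: a symplectomorphism $\phi\colon B_{T^*S^2}(\varepsilon_2)\to B_{2;1,1}(\varepsilon_2)$, with $\varepsilon_1<\varepsilon_2$, that agrees with $\psi$ near the boundary, so that $\phi$ and $\psi$ glue. The paper turns this boundary condition into a question about divisors in a closed manifold, as follows.
\begin{enumerate}
\item Collapse the characteristic circles on $\partial B_{T^*S^2}(\varepsilon_2)$ and on $\partial B_{2;1,1}(\varepsilon_2)$ by a symplectic cut. In both cases this gives $\C P^1(\varepsilon_2)\times\C P^1(\varepsilon_2)$, and the collapsed boundaries become symplectic $(+2)$-spheres $\Sigma_0$ and $\Sigma_1$.
\item By Gromov, such spheres are unique up to symplectic isotopy, hence up to Hamiltonian isotopy since $H^1(\C P^1\times\C P^1;\R)=0$. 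This yields a symplectomorphism $\phi'$ with $\phi'(\Sigma_0)=\Sigma_1$.
\item Because $\psi$ is fibred, it extends across the divisors to a symplectic identification $\iota_\psi$ of neighbourhoods of $\Sigma_0$ and $\Sigma_1$.
\item A Hamiltonian diffeomorphism $\rho$ supported near $\Sigma_0$ makes $\phi:=\phi'\circ\rho$ coincide with $\iota_\psi$ near $\Sigma_0$. After removing the divisors, $\phi$ glues to $\psi$.
\end{enumerate}
If you prefer to keep your filling-theoretic picture, you would need uniqueness of exact fillings of $\R P^3$ \emph{relative to a fixed collar}. The standard proofs of such results pass through a compactification of this kind.
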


\begin{proof}
    Pick $0 < \varepsilon_1 < \varepsilon_2 < \min(\alpha,\beta)$.
    Then, as shown in \cite[Chapter 2]{Ev23:book}, there exists a fibred symplectomorphism $$\psi:E_{T^*S^2}(\alpha,\beta) \setminus B_{T^*S^2}(\varepsilon_1) \to E_{2;1,1}(\alpha,\beta) \setminus B_{2;1,1}(\varepsilon_1).$$
    Our goal is now to show that $B_{T^*S^2}(\varepsilon_2)$ and $B_{2;1,1}(\varepsilon_2)$ are symplectomorphic via a symplectomorphism $\phi$ that agrees with $\psi$ near $\partial B_{T^*S^2}(\varepsilon_2)$, because then the symplectomorphism 
    $$
    \varphi: E_{T^*S^2}(\alpha,\beta) \to E_{2;1,1}(\alpha,\beta)
    $$
    obtained by gluing $\psi$ and $\phi$ proves the claim.
    Since $B_{T^*S^2}(\varepsilon_2)=D^*\left(S^2,\tau / \varepsilon_2 \lvert\cdot\rvert_{g_0^*} \right)$, as noted in \cref{rmk:BTS2}, we know that performing a symplectic cut along the boundary characteristics yields $\C P^1(\varepsilon_2) \times \C P^1(\varepsilon_2)$ and the thereby introduced symplectic divisor is an embedded symplectic $(+2)$-sphere $\Sigma_0$, as discussed in \cite{Aud07,HiPiWu16,OaUs16}.
    Similarly performing a symplectic cut on the boundary of $B_{2;1,1}(\varepsilon_2)$, yields $\C P^1(\varepsilon_2) \times \C P^1(\varepsilon_2)$ and introduces a $(+2)$-sphere $\Sigma_1$.
    Gromov's classical result in \cite[Section 2.4.E]{Gro85} shows that embedded symplectic $(+2)$-spheres in monotone $\C P^1 \times \C P^1$ are unique up to symplectic isotopies which immediately implies that they are unique up to Hamiltonian isotopy, because $H^1_\text{dR}(\C P^1 \times \C P^1;\R)$ vanishes.
    Therefore we find a symplectomorphism 
    $$
    \phi':(\C P^1(\varepsilon_2) \times \C P^1(\varepsilon_2),\Sigma_0) \to (\C P^1(\varepsilon_2) \times \C P^1(\varepsilon_2),\Sigma_1).
    $$
    Note that $\psi$ induces a symplectic embedding $\iota_\psi:\nu \Sigma_0 \setminus \Sigma_0 \hookrightarrow \nu \Sigma_1 \setminus \Sigma_1$, where $\nu \Sigma_i$ is a normal neighbourhood of $\Sigma_i$.
    Since both $\Sigma_0$ and $\Sigma_1$ are divisors that were introduced by a symplectic cut and $\psi$ is fibre preserving $\iota_\psi$ extends to $\Sigma_0$ and restricting to $\Sigma_0$ it yields a symplectomorphism $\iota_\psi|_{\Sigma_0}:\Sigma_0 \to \Sigma_1$.
    Then there exists a Hamiltonian diffeomorphism $\rho$ supported near $\Sigma_0$ such that $\phi:=\phi' \circ \rho$ is equal to $\iota_\psi$ in a possibly smaller neighbourhood $\nu \Sigma_0$.
    See \cite[Lemma 5.2.6]{ABEHS25} for details on how to construct this Hamiltonian diffeomorphism.
    In particular, this means that $\psi$ and $\phi$ glue to a symplectomorphism 
    $$
    \varphi: E_{T^*S^2}(\alpha,\beta) \to E_{2;1,1}(\alpha,\beta)
    $$
    as claimed.
\end{proof}

\begin{corollary}
\label{cor:211_ellipsoids}
    For $\alpha,\beta \in \R_{>0}$ the ellipsoid $E_{2;1,1}(\alpha,\beta)$ is symplectomorphic to the unit codisc bundle $D^*(S^2,F^*_{\alpha,\beta}):=\{F_{\alpha,\beta}^*\leq 1\}\subseteq T^*S^2$.
\end{corollary}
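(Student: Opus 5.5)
By \cref{prop:ETS2_E211} it suffices to show that $E_{T^*S^2}(\alpha,\beta)=D^*(S^2,F^*_{\alpha,\beta})$ as subsets of $T^*S^2$. Both sets contain the zero section: $0_{S^2}$ is added in the definition of $E_{T^*S^2}(\alpha,\beta)$, and $F^*_{\alpha,\beta}$ vanishes there. So it remains to compare the two sets on $T^*S^2\setminus 0_{S^2}$, where $\widetilde{\mu}_{S^2}=(H-J,H)$ is defined.

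Away from the zero section, $E_{T^*S^2}(\alpha,\beta)$ is the preimage under $\widetilde{\mu}_{S^2}$ of the triangle $\Delta_{2;1,1}(\alpha,\beta)=\text{Conv}\{(0,0),\alpha(2,1),\beta(0,1)\}$. We compare this triangle with the image of $\widetilde{\mu}_{S^2}$, which by \cref{rmk:moment_map_S2} is the wedge $\{x>0,\ \lvert y\rvert\le x\}$ transformed by $M$.

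\begin{itemize}
\item Since $H\ge\lvert J\rvert$, the image of $\widetilde{\mu}_{S^2}$ is the open-at-the-origin wedge $\{(u,v)\mid 0\le u\le 2v,\ v>0\}$, where $u=H-J$ and $v=H$.
\item Two of the sides of the triangle lie on the edges of this wedge: $u=0$ and $u=2v$. So, inside the wedge, the triangle is cut out by the single remaining inequality, namely lying on the origin side of the line through $(0,\beta)$ and $(2\alpha,\alpha)$.
\item That line is $2\alpha\beta = (\beta-\alpha)u+2\alpha v$. Check: at $(0,\beta)$ the right-hand side is $2\alpha\beta$; at $(2\alpha,\alpha)$ it is $2\alpha(\beta-\alpha)+2\alpha^2=2\alpha\beta$.
\item Substituting $u=H-J$ and $v=H$ gives $(\beta-\alpha)(H-J)+2\alpha H=(\alpha+\beta)H-(\beta-\alpha)J$. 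This matches the line labelled in \cref{fig:ATF_moment_map}(b).
\end{itemize}

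The inequality $(\alpha+\beta)H-(\beta-\alpha)J\le 2\alpha\beta$ is, after dividing by $2\alpha\beta$, exactly $F^*_{\alpha,\beta}\le 1$ by \eqref{eq:Randers_metric}. The origin side is the correct one, since at $(u,v)=(0,0)$ the left-hand side is $0<2\alpha\beta$.

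Hence $\widetilde{\mu}_{S^2}^{-1}(\Delta_{2;1,1}(\alpha,\beta))=\{F^*_{\alpha,\beta}\le1\}\setminus 0_{S^2}$, and adding back the zero section gives $E_{T^*S^2}(\alpha,\beta)=D^*(S^2,F^*_{\alpha,\beta})$. Combined with \cref{prop:ETS2_E211}, this proves the corollary.

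The only real care needed is the bookkeeping: checking that the wedge edges coincide with the triangle edges and that the orientation of the inequality is correct. Both follow from $H\ge\lvert J\rvert$ and evaluation at the origin.
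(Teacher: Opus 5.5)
Your proposal is correct and is the paper's proof. You identify $E_{T^*S^2}(\alpha,\beta)$ with $\{(\alpha+\beta)H-(\beta-\alpha)J\le 2\alpha\beta\}=\{F^*_{\alpha,\beta}\le 1\}$ by unwinding the definition, then invoke \cref{prop:ETS2_E211}; you just spell out the half-plane bookkeeping (wedge edges, third side, orientation, zero section) that the paper leaves to \cref{fig:ATF_moment_map}.
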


\begin{proof}
    Writing out the definition of $E_{T^*S^2}(\alpha,\beta)$ we find, as also shown in \cref{fig:ATF_moment_map},
    \begin{equation*}
        E_{T^*S^2}(\alpha,\beta)=\{(\beta+\alpha)H-(\beta-\alpha)J\leq 2\alpha\beta\}=\{F_{\alpha,\beta}^*\leq 1\}=D^*(S^2,F^*_{\alpha,\beta}).
    \end{equation*}
    Hence, \cref{prop:ETS2_E211} implies the claim.
\end{proof}

\begin{remark}
\label{rmk:volume_E211}
    In particular, \cref{cor:211_ellipsoids} proves that for $\lambda >0$ the $(2;1,1)$-ball $B_{2;1,1}(\lambda)$ is symplectomorphic to $D^*\left(S^2,\frac{\tau}{\lambda}\lvert\cdot\rvert_{g_0^*}\right)$, which means that 
    $$
    \vol(B_{2;1,1}(\lambda))=
    \vol\hspace{-2.4pt}\left(D^*\left(S^2,\tfrac{\tau}{\lambda}\lvert\cdot\rvert_{g_0^*}\right)\right)=
    \int_{D^*\left(S^2,\frac{\tau}{\lambda}\lvert\cdot\rvert_{g_0^*}\right)} \frac{\omega^2_\can}{2}= 
    \lambda^2.
    $$
    This formula holds more generally, i.e.\ for $\alpha,\beta \in \R_{>0}$ we have
    $$
    \vol(E_{2;1,1}(\alpha,\beta))=\text{Area}(\Delta_{2;1,1}(\alpha,\beta))=\alpha\beta,
    $$
    which follows from the usual facts about moment maps.
\end{remark}

\begin{remark}
    Another natural question is whether the Gromov width $c_{\mathrm{Gr}}$ of the ellipsoids $E_{2;1,1}(\alpha,\beta)$ and $E_{2,1}(\alpha,\beta)$ can be computed.
    In \cite{FerRam22}, Ferreira and Ramos compute the Gromov width in the round cases, i.e.\ for $B_{2;1,1}(\lambda)$ and $B_{2,1}(\lambda)$.
    This question will be addressed by Ramos, Schlenk and the second author in forthcoming work \cite{HaSch26}.
\end{remark}

It is well known that the interior of $D^*\left(S^2,\tau\lvert\cdot\rvert_{g_0^*}\right)$ compactifies to $\C P^1 (1) \times \C P^1(1)$.
See for example \cite{Aud07,OaUs16}.
Therefore, $E_{2;1,1}(\alpha,\beta)$ embeds into $\C P^1 (1) \times \C P^1(1)$ for small $\alpha,\beta$.
We will quantify for which $\alpha,\beta \in \R_{>0}$ this is the case, i.e.\ we will compute the set
\begin{equation}\label{eq:def_A211}
    \ca_{2;1,1}:=\left\{(\alpha,\beta) \in \R_{>0}^2 \,\middle|\, E_{2;1,1}(\alpha,\beta) \xhookrightarrow{\raisebox{-0.5ex}{\( \hspace{0.5em}\scriptstyle s \hspace{0.5em} \)}} \C P^1 
    (1) \times \C P^1(1)\right \}.
\end{equation}
Similarly, we define the corresponding set for the affine problem:
\begin{equation}\label{eq:def_E211}
    \ce_{2;1,1}:=\left\{(\alpha,\beta) \in \R_{>0}^2 \,\middle|\, E_{2;1,1}(\alpha,\beta)  \xhookrightarrow{\raisebox{-0.5ex}{\( \hspace{0.5em}\scriptstyle s \hspace{0.5em} \)}} B_{2;1,1}(1) \right \}.
\end{equation}

\begin{remark}
\label{rmk:Randers_interpretation_RP2}
    This discussion for the $(2;1,1)$-case generalises immediately to the $(2,1)$-case, because both $H$ and $J$, defined in \eqref{eq:H_and_J}, are invariant under the antipodal map.
    Only the period lattice has to be corrected, since the antipodal quotient divides the lengths of great circles by $2$.
    The analogue of \cref{cor:211_ellipsoids} gives that $E_{2,1}(\alpha,\beta)$ is symplectomorphic to $D^*(\R P^2, G^*_{\alpha,\beta}) \subseteq T^* \R P^2$, where $G_{\alpha,\beta}^*$ is defined analogously to $F_{\alpha,\beta}^*$, as in \eqref{eq:Randers_metric}, using the functions induced by $H/2$ and $J/2$ on the quotient.
    In particular, under the quotient map $G_{\alpha,\beta}^*$ pulls back to $1/2 F_{\alpha,\beta}^*=F_{2\alpha,2\beta}^*$.
    Since the interior of $B_{2,1}(1) \cong D^*(\R P^2, \tau/2 \lvert \cdot \rvert^*_{g_0})$ compactifies to $\C P^2(2)$, see again \cite{OaUs16,Aud07}, we define, in analogy to \eqref{eq:def_A211},
    \begin{equation}\label{eq:def_A21}
        \ca_{2,1}:=\left\{(\alpha,\beta) \in \R_{>0}^2 \,\middle|\, E_{2,1}(\alpha,\beta) \xhookrightarrow{\raisebox{-0.5ex}{\( \hspace{0.5em}\scriptstyle s \hspace{0.5em} \)}} \C P^2(2)\right\}.
    \end{equation}
\end{remark}

\section{\texorpdfstring{$\ca_{2;1,1}$}{A211} has a symplectic staircase structure}
\label{sec:A211_staircase}

In this section we repeat the main arguments in \cite{ABEHS25} for the case $X := \C P^1(1) \times \C P^1(1)$.
We do this in order to show that the methods used there for the target $\C P^2$ extend to this case.
However, we will not repeat the definitions and the whole setup of \cite{ABEHS25}, as they are verbatim the same here.
We only formulate a slight generalisation of the results in \cite{ABEHS25} and we will comment on the generalisation at the appropriate point.

Assume in the following that $\iota: E_{2;1,1}(\alpha,\beta) \hookrightarrow X$ is a symplectic embedding for some positive real numbers $\alpha,\beta \in \R_{>0}$.
By definition this means that there is a symplectic embedding 
$$
\hat{\iota}:E_{2;1,1}(\alpha+\varepsilon,\beta+\varepsilon) \hookrightarrow X
$$
that extends $\iota$, i.e.\ $\hat{\iota}|_{E_{2;1,1}(\alpha,\beta)}=\iota$.
Denote $U:=\iota(E_{2;1,1}(\alpha,\beta))$, $V:=X\setminus U$ and pick a Delzant pavilion $(\bm{\rho},\bm{\lambda})$ that is completely contained in $\Delta_{2;1,1}(\alpha+\varepsilon,\beta+\varepsilon)\setminus \Delta_{2;1,1}(\alpha,\beta)$, as shown in \cref{fig:pavillion_choice}.
Moreover, denote the pavilion blow-up of $X$ along $\hat{\iota}$ by $\widetilde{X}=\text{Pav}_{\bm{\rho},\bm{\lambda}}^{\hat{\iota}}(X)$, equipped with a symplectic form $\widetilde{\omega}$ and a compatible almost complex structure $\widetilde{J}$, that makes the exceptional curves $C_1,\ldots,C_m$ of the pavilion $\widetilde{J}$-holomorphic.

\begin{figure}[htb]
  \begin{center}   
    \begin{tikzpicture}
    \begin{scope}[shift={(-6,0)}]
        \node at (-0.8,0.2) {(a)};
        \filldraw[lightgray,opacity=0.75] (0,2.7) -- (0,0) -- (1.8,0.9) -- cycle;
        \filldraw[lightgray,opacity=0.4] (0,4.2) -- (0,0) -- (2.8,1.4) -- cycle;
        \draw[thick] (0,2.7) -- (0,0) -- (1.8,0.9);
        \node at (0,2.7) [left] {\footnotesize $(0,9/10)$};
        \node at (1.8,0.9) [below right, xshift=-4pt, yshift=2pt] {\footnotesize $(6/10,3/10)$};
        \draw[dashed] (0.8,0.8) node[cross] {} -- (0.4,0.4) node[cross] {} --(0,0);
        \draw[thick,dash pattern=on 7pt off 3pt] (0,2.7) -- (1.8,0.9);
    \end{scope}

    \begin{scope}[shift={(1,0)}]
        \node at (-0.8,0.2) {(b)};
        \filldraw[lightgray,opacity=0.75] (0,2.7) -- (0,0) -- (1.8,0.9) -- cycle;
        \filldraw[lightgray,opacity=0.4] (0,4.2) -- (0,0) -- (2.8,1.4) -- cycle;
        \draw[thick] (0,4.2) -- (0,0) -- (2.8,1.4);
        \draw[dashed] (0.8,0.8) node[cross] {} -- (0.4,0.4) node[cross] {} --(0,0);
        \draw[thick] (0,3.05) -- (1.9,1.15) -- (2.1,1.15) -- (2.7,1.35);
        \draw[->,thick] (0,3.05) -- (1,2.05) node [above] {\(u_1\)};
        \draw[->,thick] (2,1.15) -- (2.05,1.15) node [above, xshift=-2pt] {\(u_2\)};
        \draw[thick,dash pattern=on 7pt off 3pt] (0,4.2) -- (2.8,1.4);
        \draw[->,thick] (2.1,1.15) -- (2.4,1.25) node [above] {\(u_3\)};
    \end{scope}
    
    \end{tikzpicture}
    \caption{(a) The almost toric base diagram $\ATF_{2;1,1}(3/10,9/10)$ of $E_{2;1,1}(3/10,9/10)$. (b) A choice of pavilion $(\bm{\rho},\bm{\lambda})$ for $\Delta_{2;1,1}(3/10+\varepsilon,9/10+\varepsilon)$.}
    \label{fig:pavillion_choice}
  \end{center}
\end{figure}

Recall that the resolution of the $A_1$-surface singularity defined by the pavilion $(\bm{\rho},\bm{\lambda})$ covers the minimal resolution of the $A_1$-surface singularity and therefore there is a distinguished index $i_\sect$ such that $C_{i_\sect}$ is the proper transform of the unique sphere in the minimal resolution.

To make the exposition more uniform we assume that there always exist $C_{i_\sect-1}$ and $C_{i_\sect+1}$, i.e.\ exceptional curves to the "left" and the "right" of $C_{i_\sect}$, see again \cref{fig:pavillion_choice}. 
In this setup we then have the following theorem.

\begin{theorem}[\normalfont{\cite[Theorem 5.1.1]{ABEHS25}}]
\label{thm:existence_ruling}
    Assuming that $\widetilde{J}$ is chosen generically on $V$ there exists an embedded square zero $\widetilde{J}$-holomorphic rational curve $\widetilde{C}\subseteq \widetilde{X}$ that intersects $C_{i_\sect}$ once and is disjoint from all the other exceptional curves of the pavilion.
\end{theorem}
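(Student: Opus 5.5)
Since the symplectic form on $X=\C P^1(1)\times\C P^1(1)$ does not enter the existence of the curve except through genericity, the plan is to transport the argument of \cite[Theorem 5.1.1]{ABEHS25} from $\C P^2$ to $X$. The key change is the homology class we follow: a fibre class $F=[\mathrm{pt}\times\C P^1]$ of the ruling, with $F\cdot F=0$ and $c_1(F)=2$, instead of a line. First, in $\widetilde X$ we fix a candidate class $A=F-\sum_j a_j E_j$. Here $E_j$ are the classes of the exceptional curves $C_j$ of the pavilion (more precisely the classes they acquire after resolving the $A_1$-singularity via the pavilion), and the $a_j$ are chosen such that $A\cdot C_{i_\sect}=1$ and $A\cdot C_j=0$ for $j\neq i_\sect$. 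The coefficients are determined by the linear system given by the intersection matrix of the chain of spheres. We then check $A\cdot A=0$ and $c_1(A)=2$, so that the index is $2c_1(A)-2=2$, corresponding to curves through one point, and the adjunction formula gives genus $0$. A natural local model is the rational curve in $B_{2;1,1}$ living over the cut $\{H-J=\text{const}\}$, i.e.\ the proper transform of a fibre passing through the collapsed zero section. This gives the expected class, and in the model toric/almost toric picture one obtains an explicit embedded sphere.

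Second, we show that $A$ has a nonzero Gromov--Witten invariant with one point constraint. We compute the invariant in a model situation, namely the standard inclusion of a small $E_{2;1,1}(\varepsilon,\varepsilon)$ into $X$ for which the pavilion blow-up is toric. There the count is $1$: the fibres of the ruling through generic points meet the collapsed $(-2)$-sphere once. By deformation invariance of GW invariants in the family of pavilion blow-ups (the symplectic forms $\widetilde\omega$ are deformation equivalent as the embedding varies, using isotopy of the embeddings or the $(+2)$-sphere uniqueness argument of \cref{prop:ETS2_E211} and Gromov), the invariant is the same for $\hat\iota$. Consequently, for every $\widetilde J$, and in particular for $\widetilde J$ generic on $V$, there is a $\widetilde J$-holomorphic stable map in class $A$ through a generic point of $V$.

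Third, and this is the main obstacle, we rule out bubbling, so that the representative is an embedded smooth sphere. Suppose it degenerates into components $\sum B_k$. Components that are exceptional curves $C_j$ must be handled using positivity of intersections and the area constraints coming from the pavilion: their areas are small, fixed by $\bm\lambda$. The remaining components pass through $V$, and by genericity of $\widetilde J$ there they have nonnegative index. Here I would follow the argument in \cite{ABEHS25} via the negative-definite chain and an index/area count. Any nontrivial splitting forces some component to have negative index or to have intersection with some $C_j$ violating $A\cdot C_j\ge 0$. This step uses that the configuration of chains on $X$ is governed by the adjunction and $c_1$-computations. Once irreducibility is established, adjunction ($A^2-c_1(A)+2=0$) gives embeddedness. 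Finally, positivity of intersections with the $\widetilde J$-holomorphic curves $C_j$ together with $A\cdot C_j=\delta_{j,i_\sect}$ shows that $\widetilde C$ meets $C_{i_\sect}$ exactly once and is disjoint from the other exceptional curves.
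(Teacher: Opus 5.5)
Your route is different from the paper's, and the difference opens a gap in your second step. The paper never uses an invariant of $\widetilde X$. It stretches the neck along $\partial U$ for curves in the fibre class $A=[\C P^1\times\{\mathrm{pt}\}]$ of $X$, which exist through every point for every tame almost complex structure. It then compactifies the top-level limit in the pavilion blow-up and writes $\widetilde C=d_0(A+B)+\sum_j d_jC_j$. So existence comes for free, $0<d_0\le\tfrac12$ is inherited from the fibre-class curves, and all remaining work is numerical. You instead need the one-point invariant of your class (call it $\widetilde A$, to avoid a clash with $A$) to be nonzero for an \emph{arbitrary} $\hat\iota$. You justify this by isotopy of embeddings or by the $(+2)$-sphere argument of \cref{prop:ETS2_E211}, and both are circular here. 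Uniqueness of embeddings is deduced from \cref{thm:structure_regulation}, and a $(+2)$-sphere in $V$ is \cref{thm:quadric_complement}, whose proof starts from that regulation. You would need independent input, e.g.\ shrinking $\hat\iota$ to a Weinstein neighbourhood of the Lagrangian sphere $\hat\iota(0_{S^2})$ and using Hind's uniqueness of Lagrangian spheres in monotone $S^2\times S^2$, or simply the paper's neck-stretching. Also, $F-\sum_j a_jE_j$ is not meaningful as written. The fibre class pairs to $\pm1$ with the Lagrangian core of $U$, so it has no representative in $V$. The correct class is $\tfrac12(A+B)+\sum_j d_jC_j$ with $M_\cc\bm d=\bm e_{i_\sect}$.

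The more serious gap is your third step, which is the actual content of the theorem and which you leave to a reference. The mechanism you propose does not work: you claim a nontrivial splitting forces a component meeting $V$ to have negative index or negative intersection with some $C_j$, but this fails because $c_1(C_j)=2+C_j^2\le 0$. Let $\Sigma=\sum_j c_jC_j$ be the total transform of the $(-2)$-curve of the minimal resolution, and set $Z:=\widetilde A-n\Sigma$ with $n\ge 1$.
\begin{itemize}
\item $c_1(Z)=2$, so $Z$ has index $2$ and can pass through the generic point.
\item $Z\cdot C_j=(1+2n)\delta_{j,i_\sect}\ge 0$ for all $j$.
\item $\omega(Z)>0$ whenever $n\,\omega(\Sigma)<\omega(\widetilde A)$.
\end{itemize}
The remaining components of $Z+n\Sigma$ are the $\widetilde J$-holomorphic $C_j$ themselves, whose negative index is no contradiction. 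This splitting is excluded only by adjunction: $Z\cdot Z=-2n(n+1)$, so $\tfrac12(Z\cdot Z-c_1(Z))+1=-n(n+1)<0$.

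Supplying this adjunction argument is exactly what the paper does. For the relevant component one has $0<d_0\le\tfrac12$. In your setting this would follow because each component $Z_k\not\subseteq\cc$ has $d_0(Z_k)>0$, by positivity of area and $-M_\cc^{-1}\ge 0$ entrywise, and $\sum_k w_k d_0(Z_k)=\tfrac12$ for the multiplicities $w_k$. The iterated blow-down formulas of \cref{lma:intmatrix_blow} and \cref{lma:invar_expression} then give $\widetilde C^2=2d_0^2-\chi^2/2-\sum_\nu\mu_\nu^2$ and $K_\cc\cdot\widetilde C=-4d_0+\sum_\nu\mu_\nu$. Adjunction yields embeddedness, and once $\widetilde C^2=0$ it forces $\chi=1$, all $\mu_\nu=0$ and $d_0=\tfrac12$, i.e.\ $\bm\xi=\bm e_{i_\sect}$. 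Pointing to \cite{ABEHS25} is not enough, since the argument there covers only minimal pavilions, and the non-minimal case is precisely what is new here.
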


\begin{remark}
    We will prove a slightly stronger version of the theorem than what is proven in \cite[Theorem 5.1.1]{ABEHS25}: in our version the pavilion is not assumed to be minimal.
    Recall that a minimal pavilion is a pavilion that corresponds to the minimal resolution. In our situation a minimal pavilion has a single entry.
\end{remark}

Our next theorem concerns the structure of the regulation defined by the ruling in \cref{thm:existence_ruling}.

\begin{theorem}[\normalfont{\cite[Theorem 5.1.4]{ABEHS25}}]
\label{thm:structure_regulation}
    Assuming that $\widetilde{J}$ is chosen generically on $V$, the curve $\widetilde{C}$ from \cref{thm:existence_ruling} defines a $\widetilde{J}$-holomorphic regulation of $\widetilde{X}$ such that $C_{i_\sect}$ is a section and such that there are two broken rulings:
    $$\cf_1=C_1\cup  \ldots \cup C_{i_\sect-1} \cup E_1 \quad\text{and}\quad \cf_2=C_{i_\sect+1}\cup \ldots \cup C_m \cup E_2$$
    where $E_1$ and $E_2$ are exceptional $(-1)$-curves which intersect $\cc=C_1 \cup \ldots \cup C_m$ in a unique point.
\end{theorem}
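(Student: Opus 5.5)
The plan is to follow the template of \cite[Theorem 5.1.4]{ABEHS25}, adapted to the target $X=\C P^1(1)\times\C P^1(1)$. By \cref{thm:existence_ruling}, for generic $\widetilde{J}$ on $V$ there is an embedded square-zero $\widetilde{J}$-rational curve $\widetilde{C}$ with $\widetilde{C}\cdot C_{i_\sect}=1$ and $\widetilde{C}\cdot C_j=0$ for $j\neq i_\sect$. Let $F=[\widetilde{C}]$. The standard theory of symplectic ruled surfaces (McDuff's results on curves of square zero in rational surfaces, with automatic regularity in dimension four) then shows that the moduli space of $\widetilde{J}$-curves in class $F$ through a generic point is nonempty. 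Compactness and positivity of intersections show that $\widetilde{X}$ is foliated by $\widetilde{J}$-holomorphic spheres in class $F$, except for finitely many nodal (broken) fibres. The result is a $\widetilde{J}$-holomorphic regulation. Since $F\cdot C_{i_\sect}=1$ and $C_{i_\sect}$ is $\widetilde{J}$-holomorphic, every fibre meets $C_{i_\sect}$ exactly once, transversally. So $C_{i_\sect}$ is a section.

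Next I would locate the remaining exceptional curves. Each $C_j$ with $j\neq i_\sect$ satisfies $C_j\cdot F=0$ and is $\widetilde{J}$-holomorphic. By positivity of intersections it must be contained in a fibre, and hence is a component of a broken fibre. The chain $\cc$ is connected and $C_{i_\sect}$ is a section, which meets each fibre once. Therefore the chain $C_1\cup\ldots\cup C_{i_\sect-1}$ lies in a single broken fibre $\cf_1$, and the chain $C_{i_\sect+1}\cup\ldots\cup C_m$ lies in a single broken fibre $\cf_2$. These fibres are distinct, since $C_{i_\sect-1}$ and $C_{i_\sect+1}$ both meet the section $C_{i_\sect}$, and a fibre meets the section only once.

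To identify the remaining components, I would count homology. Write $\widetilde{X}$ as $X$ blown up $m-1$ times, since a non-minimal pavilion adds blow-ups. Its Euler characteristic is $4+(m-1)$. A ruled surface over $S^2$ with $k$ extra components across broken fibres has Euler characteristic $4+k$, so the broken fibres contain exactly $m+1$ components beyond one per fibre. The $C_j$ with $j\neq i_\sect$ account for $m-1$ components. Since both $\cf_1$ and $\cf_2$ must contain at least one more component (the fibre class is not a combination of these negative curves alone), this forces exactly one extra component in each, $E_1$ and $E_2$, and no further broken fibres.

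Each broken fibre is a tree of spheres with total self-intersection obtained from $0$ by blow-ups. Every tree of this type contracts to a smooth fibre by iteratively blowing down $(-1)$-curves. The $C_j$ have self-intersection at most $-2$ for a pavilion resolving the singularity, so $E_i$ must be the unique $(-1)$-curve. It appears with multiplicity one, and by the tree structure it meets $\cc$ in a single point, at the end of the chain.

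The main obstacle I expect is this last step. One must check, using the explicit self-intersection numbers of the non-minimal pavilion chain, that $E_i$ attaches at a single point. It must also appear with multiplicity one, rather than with multiplicity or meeting the chain twice. I would first compute the fibre class as an explicit combination $F=\sum a_jC_j+E_i$ via the continued-fraction data of the pavilion. Then $F\cdot C_j=0$ should pin down where $E_i$ attaches.
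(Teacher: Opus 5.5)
The gap is in the identification of $E_i$ as a $(-1)$-curve. Up to that step your outline agrees with the paper's sketch: the ruling defined by $\widetilde{C}$, $C_{i_\sect}$ as a section, and the two tails forced into two distinct broken fibres $\cf_1,\cf_2$. Your Euler characteristic count is a good way to make "exactly one extra component in each" precise. It gives $m-1$ extra components in total, not $m+1$, but since $\cf_1\cup\cf_2$ already contains at least $(m-1)+2$ components, the count closes up as you intend.

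\textbf{Where the argument fails.} You claim that every $C_j$ has self-intersection at most $-2$. This is false exactly in the non-minimal case that this version of the theorem is meant to cover. Any non-minimal pavilion chain contains the last exceptional curve as a $(-1)$-curve; for instance, the paper's own pavilion in \cref{fig:pavillion_choice}(b) gives the chain $(-1,-4,-1)$.

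\textbf{Why the fallback does not help.} Solving $F\cdot C_j=0$ with the pavilion data cannot decide the question, because the numerics admit a non-$(-1)$ solution. Blow up once more the vertex where that pavilion meets the vertical edge. The chain becomes $(C_1,C_2,C_3,C_4)$ with squares $(-1,-2,-4,-1)$ and $i_\sect=3$. Two fibres then satisfy every intersection and Chern number constraint:
\begin{itemize}
\item $\cf_1=C_1+C_2+E_1$, with $E_1$ a $(-1)$-sphere meeting $C_2$;
\item $\cf_1=2C_1+C_2+E_1'$, with $E_1'$ a $(-2)$-sphere meeting $C_1$.
\end{itemize}
The second is what the integrable toric $\widetilde{J}$ actually produces. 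One sees this from the fan projection $(x,y)\mapsto x$: $E_1'$ is the sphere over the vertical edge of the moment polygon. The first configuration also shows that $E_i$ need not attach at an end of $\cc$, contrary to what you assert.

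\textbf{The missing ingredient.} You need the hypothesis that $\widetilde{J}$ is generic on $V$, which you only use through \cref{thm:existence_ruling}.
\begin{itemize}
\item Since $E_i$ is not one of the $C_j$, its underlying simple curve has injective points where $\widetilde{J}$ is generic. So it is regular and $2c_1(E_i)-2\ge 0$.
\item As a component of a reducible fibre, $E_i^2<0$, because $0=F\cdot E_i$ and $E_i$ meets another component positively.
\item The adjunction inequality $E_i^2-c_1(E_i)+2\ge 0$, with equality if and only if the curve is embedded, then forces $E_i^2=-1$, $c_1(E_i)=1$, and that $E_i$ is embedded.
\item That $E_i$ meets $\cc$ in a single point follows from the tree structure of the fibre. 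Also, $F\cdot C_{i_\sect}=1$ forces $E_i\cdot C_{i_\sect}=0$.
\end{itemize}
This is precisely what rules out the toric configuration above, where the $(-2)$-sphere $E_1'$ passes through $V$. Multiplicity one of $E_i$ is not part of the statement and need not be checked.
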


We give the proofs of \cref{thm:existence_ruling} and \cref{thm:structure_regulation} at the end of this section.
Define the configuration of symplectic spheres $\ct:=\cf_1 \cup C_{i_\sect} \cup \cf_2$.
With these theorems in place results of~\cite{ABEHS25} now imply that $(2;1,1)$-ellipsoid embeddings in $X$ are unique up to symplectomorphism. 

\begin{theorem}
    Given two symplectic embeddings $\iota,\iota':E_{2;1,1}(\alpha,\beta) \hookrightarrow X$ there exists a symplectomorphism $\Phi \in \symp(X)$ that intertwines $\iota$ and $\iota'$.
\end{theorem}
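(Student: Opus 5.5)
The approach is the one of \cite{ABEHS25}: reduce uniqueness of the embeddings to uniqueness of the configuration $\ct$ in the pavilion blow-up, and then blow back down.

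First I fix $\varepsilon>0$ so small that both $\iota$ and $\iota'$ extend to $\hat\iota,\hat\iota'\colon E_{2;1,1}(\alpha+\varepsilon,\beta+\varepsilon)\hookrightarrow X$. I choose one Delzant pavilion $(\bm{\rho},\bm{\lambda})$ in $\Delta_{2;1,1}(\alpha+\varepsilon,\beta+\varepsilon)\setminus\Delta_{2;1,1}(\alpha,\beta)$ and use it for both embeddings. Performing the pavilion blow-up along $\hat\iota$ and along $\hat\iota'$ gives $(\widetilde X,\widetilde\omega)$ and $(\widetilde X',\widetilde\omega')$. These are blow-ups of $X$ with the same combinatorics, so there is a diffeomorphism between them that identifies the classes $[C_i]$ and $[C_i']$ and pulls $[\widetilde\omega']$ back to $[\widetilde\omega]$. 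Applying \cref{thm:existence_ruling} and \cref{thm:structure_regulation} to both sides, with generic $\widetilde J$ and $\widetilde J'$ on the complements, produces configurations
\[
\ct=\cf_1\cup C_{i_\sect}\cup\cf_2 \quad\text{and}\quad \ct'=\cf_1'\cup C'_{i_\sect}\cup\cf_2'.
\]
The homological data determine the intersection graphs and areas of these configurations. In particular, the classes of $E_1,E_2$ are forced to be the ones that complete the broken fibres of the ruling in the class $[\widetilde C]$, and this must be matched with its counterpart for $\ct'$.

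Second, I show that $(\widetilde X,\ct)$ and $(\widetilde X',\ct')$ are symplectomorphic as pairs. Both regulations are ruled surfaces over $S^2$ with two broken fibres. Blowing down $E_1,E_2$ and the chains $\cf_1\setminus E_1$, $\cf_2\setminus E_2$ in the order dictated by the regulation (in the spirit of Lalonde--McDuff and McDuff's classification of rational ruled surfaces) reduces the question to uniqueness of a section together with two fibres in a Hirzebruch surface of fixed cohomology class. That uniqueness follows from Gromov's argument via $J$-holomorphic foliations, as already used in \cref{prop:ETS2_E211}. Blowing back up equivariantly, after an isotopy matching the configurations, gives a symplectomorphism $\widetilde\Phi\colon(\widetilde X,\ct)\to(\widetilde X',\ct')$. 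I then adjust $\widetilde\Phi$ near $\ct$ with a Hamiltonian correction, as in \cite[Lemma 5.2.6]{ABEHS25}, so that it agrees with the identification of standard neighbourhoods of $\ct$ and $\ct'$ coming from the pavilion models.

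Third, I blow down. The complement of a neighbourhood of $\ct$ is, up to the pavilion collar, $X\setminus \hat\iota(E_{2;1,1}(\alpha,\beta))$. The neighbourhood of $\ct$ is the pavilion model of the collar $\hat\iota(E_{2;1,1}(\alpha+\varepsilon,\beta+\varepsilon))\setminus\iota(E_{2;1,1}(\alpha,\beta))$ with the exceptional curves inserted. Hence $\widetilde\Phi$ descends to a symplectomorphism $X\setminus\iota(\Interior E_{2;1,1}(\alpha,\beta))\to X\setminus\iota'(\Interior E_{2;1,1}(\alpha,\beta))$ that matches the boundary identification $\iota'\circ\iota^{-1}$. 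Extending by $\iota'\circ\iota^{-1}$ on the image of $\iota$ gives $\Phi\in\symp(X)$ with $\Phi\circ\iota=\iota'$.

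I expect the main obstacle to be the second step. There I have to control the classes of $E_1,E_2$ and verify that the pavilion chain together with the ruling determines the pair up to symplectomorphism, now for a non-minimal pavilion and the target $\C P^1\times\C P^1$ rather than $\C P^2$. The ruling classes differ here, so the homological bookkeeping has to be redone. My first move would be to check that the adjunction formula and positivity of intersections with $\ct$ force the two broken fibres to be unique in their classes, and then to quote the corresponding argument of \cite{ABEHS25} verbatim.
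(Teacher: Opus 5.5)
Your proposal is the paper's argument. You extend both embeddings, blow up along one common pavilion, take $\ct,\ct'$ from \cref{thm:existence_ruling} and \cref{thm:structure_regulation}, identify the pairs as in \cite[Corollary 5.2.2]{ABEHS25} (the paper cites this step rather than sketching it via a Hirzebruch surface as you do), and glue back as in \cite[Corollary 5.2.5]{ABEHS25}. The only imprecision is that the pavilion model describes a neighbourhood of $\cc=C_1\cup\dots\cup C_m$, not of all of $\ct$, since $E_1,E_2$ are not part of it; the Hamiltonian normalisation and the blow-down in your last step should therefore be done near $\cc$ only.
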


\begin{proof}
    By definition of symplectic embedding there exist extensions to symplectic embeddings $\hat{\iota},\hat{\iota}':E_{2;1,1}(\alpha+\varepsilon,\beta+\varepsilon) \hookrightarrow X$. 
    Choose a pavilion $(\bm{\rho},\bm{\lambda})$ as shown in \cref{fig:pavillion_choice} and denote the respective pavilion blow-ups by $((\widetilde{X},\widetilde{\omega}),\ct)$ and $((\widetilde{X}',\widetilde{\omega}'),\ct')$, where $\ct$ and $\ct'$ are the configurations of symplectic spheres, as in \cref{thm:structure_regulation}.
    Then, exactly as proven in \cite[Corollary 5.2.2]{ABEHS25}, there exists a symplectomorphism 
    $$
    \Psi:\big((\widetilde{X},\widetilde{\omega}),\ct\big) \to \big((\widetilde{X}',\widetilde{\omega}'),\ct'\big)
    $$
    and, in analogy to \cite[Corollary 5.2.5]{ABEHS25}, this symplectomorphism $\Psi$ glues to a symplectomorphism $\Phi \in \text{Symp}(X)$ such that $\iota'=\Phi \circ \iota$.\footnote{See also the end of the proof of \cref{prop:ETS2_E211} where a similar gluing argument is used.}
\end{proof}

\begin{figure}[htb]
  \begin{center}   
    \begin{tikzpicture}
    \begin{scope}[shift={(0,0)}]
        \filldraw[lightgray,opacity=0.75] (0,1) -- (0,0) -- (2,1) -- cycle;
        \draw[thick] (0,1) -- (0,0) -- (2,1) -- (0,1);
        \draw[dashed] (0.6,0.6) node[cross] {} -- (0.3,0.3) node[cross] {} --(0,0);
        \draw [decorate,decoration={brace,amplitude=5pt,raise=1ex}] (0,0) -- (0,1) node[midway,xshift=-2.6ex,yshift=0ex]{\footnotesize $1$};
        \draw [decorate,decoration={brace,amplitude=5pt,raise=1ex}] (2,1) -- (0,0) node[midway,xshift=1.6ex,yshift=-2.8ex]{\footnotesize $1$};
    \end{scope}

    \begin{scope}[shift={(4,0)}]
        \filldraw[lightgray,opacity=0.75] (0,1/3) -- (0,0) -- (6,3) -- cycle;
        \draw[thick] (0,1/3) -- (0,0) -- (6,3) -- (0,1/3);
        \draw[dashed] (0.4,0.4) node[cross] {} -- (0.2,0.2) node[cross] {} --(0,0);
        \draw [decorate,decoration={brace,amplitude=5pt,raise=1ex}] (0,0) -- (0,1/3) node[midway,xshift=-2.6ex,yshift=0ex]{\footnotesize $\frac{1}{3}$};
        \draw [decorate,decoration={brace,amplitude=5pt,raise=1ex}] (6,3) -- (0,0) node[midway,xshift=1.6ex,yshift=-2.8ex]{\footnotesize $3$};
    \end{scope}

    \begin{scope}[shift={(-3,0)}]
        \filldraw[lightgray,opacity=0.75] (0,3) -- (0,0) -- (2/3,1/3) -- cycle;
        \draw[thick] (0,3) -- (0,0) -- (2/3,1/3) -- (0,3);
        \draw[dashed] (0.4,0.4) node[cross] {} -- (0.2,0.2) node[cross] {} --(0,0);
        \draw [decorate,decoration={brace,amplitude=5pt,raise=1ex}] (0,0) -- (0,3) node[midway,xshift=-2.6ex,yshift=0ex]{\footnotesize $3$};
        \draw [decorate,decoration={brace,amplitude=5pt,raise=1ex}]  (2/3,1/3) -- (0,0) node[midway,xshift=1.6ex,yshift=-2.8ex]{\footnotesize $\frac{1}{3}$};
    \end{scope}
    
    \end{tikzpicture}
    \caption{In the middle an almost toric base diagram for $X$, which yields embeddings of $E_{2;1,1}(1-\varepsilon,1-\varepsilon)$ into $X$ for all $0<\varepsilon<1$. On the left/right the almost toric base diagram obtained from the middle one by a mutation of the top left/right vertex.}
    \label{fig:first_mutations}
  \end{center}
\end{figure}

\begin{corollary}
\label{cor:obstruction_211_stair}
    For all $i \in \Z$ no symplectic embedding $E_{2;1,1}(\alpha,\beta) \hookrightarrow X$ exists if both $\alpha > \frac{m_{i+1}}{m_i}$ and $\beta > \frac{m_{i+1}}{m_{i+2}}$, where $\{m_i\}_{i \in \Z}$ is the sequence defined in \eqref{eq:211_seq}.
\end{corollary}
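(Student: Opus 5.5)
Since embeddings $E_{2;1,1}(\alpha,\beta)\hookrightarrow X$ are unique up to symplectomorphism (the preceding theorem), I would argue by contradiction and play off any given embedding against one explicit model embedding. The first step is to build, for each $i\in\Z$, an almost toric base diagram $\mathfrak{B}_i$ of $X$ from the middle diagram of \cref{fig:first_mutations} by iterated mutations of the top vertices. The two pictured mutations produce triangles with edge data $(3,1/3)$ and $(1/3,3)$. Inductively, I expect $\mathfrak{B}_i$ to contain the $(2;1,1)$-corner at the origin with sides $m_{i+1}/m_i$ along the $(2,1)$-ray and $m_i/m_{i+1}$ along the $(0,1)$-ray. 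The opposite vertex should be a T-singularity whose weight is governed by $m_{i+1}$ and $m_i$. The recursion $m_{i+2}=4m_{i+1}-m_i$ must be checked to come out of the mutation formula. I would verify this through the Markov-type equation satisfied by consecutive $m_i$, in the same way as the Pell branch is handled in \cite{ABEHS25}; the volume normalisation $\text{Area}=2$ of $X$ gives a consistency check.

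Each $\mathfrak{B}_i$ gives a standard embedding $\iota_i$ of $E_{2;1,1}(\alpha,\beta)$ for every $(\alpha,\beta)\in\square_i(2;1,1)$, since the base diagram of the ellipsoid sits inside $\mathfrak{B}_i$ at the origin corner. Now suppose, for contradiction, that $\iota\colon E_{2;1,1}(\alpha,\beta)\hookrightarrow X$ exists with $\alpha>m_{i+1}/m_i$ and $\beta>m_{i+1}/m_{i+2}$. Restricting $\iota$ to $E_{2;1,1}(\alpha',\beta')$ for suitable $(\alpha',\beta')\in\square_i\cap\square_{i+1}$ and applying uniqueness, I would obtain $\Phi\in\symp(X)$ with $\iota=\Phi\circ\iota_i$ on that smaller domain. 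Consequently, any $J$-holomorphic curve visible in $\mathfrak{B}_i$ that avoids the image of $\iota_i$ is carried by $\Phi$ to a symplectic curve in the complement of $\iota(E_{2;1,1}(\alpha',\beta'))$.

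The obstruction comes from a visible curve in the style of \cite{McDSie25}. Inside $\mathfrak{B}_i$, I would take the visible symplectic sphere (or orbifold curve) $S_i$ lying over the edge joining the $(2,1)$-ray vertex to the $(0,1)$-ray vertex at the step where the staircase corner $(m_{i+1}/m_i,\,m_{i+1}/m_{i+2})$ sits. I would compute its homology class and its area, or more precisely the areas of its two halves as seen from the ellipsoid corner. Then I would stretch the neck along $\partial\iota(E_{2;1,1}(\alpha,\beta))$, or equivalently pass to the pavilion blow-up and use the regulation of \cref{thm:structure_regulation}. Since the curve class persists and has positive intersection with the exceptional curves $C_{i_\sect\pm1}$, positivity of area forces the inequality $\alpha\le m_{i+1}/m_i$ or $\beta\le m_{i+1}/m_{i+2}$. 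I would then check that the class has non-vanishing Gromov--Witten type count, for example as an exceptional class in the blow-up, and carry out the area bookkeeping: area of the class minus the contributions $\alpha\cdot(\text{mult})+\beta\cdot(\text{mult})$ must be positive.

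I expect the main obstacle to be this last step. It requires identifying the correct class, namely one whose intersection multiplicities with the two broken rulings $\cf_1,\cf_2$ are $(m_i,m_{i+2})$ up to normalisation, and then proving it survives for a generic $\widetilde J$ and under neck-stretching. I would first try to realise it as the proper transform of the visible curve in $\mathfrak{B}_{i+1}$ and use index and positivity of intersections with $\ct$ to rule out breaking, exactly as in the $\C P^2$ case of \cite{ABEHS25}.
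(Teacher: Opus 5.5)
\paragraph{A gap at the obstruction step.} Your scaffolding matches what the paper defers to \cite{ABEHS25}: mutations produce diagrams $\mathfrak{B}_i$ realising $\square_i(2;1,1)$, the recursion comes from $m_i^2+m_{i+1}^2+2=4m_im_{i+1}$, and a curve gives the obstruction. The step that carries the content, however, fails.

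\emph{The curve $S_i$ cannot obstruct.} Your $S_i$ lies over the edge opposite the $A_1$-vertex, which is the divisor at infinity. That edge has affine length $2/(m_im_{i+1})$, so $S_i$ is a fraction of the class $A+B$.
\begin{itemize}
\item For $i\neq0$, one of its endpoints has weight $m^2>1$ and is nodally traded in the smooth $X$. So $S_i$ is not even a closed curve in $X$.
\item For $i=0$, it is the $(+2)$-sphere in class $A+B$. By \cref{thm:quadric_complement}, such a sphere sits in the complement of \emph{every} embedded $E_{2;1,1}(\alpha,\beta)$.
\end{itemize}
Lying away from the ellipsoid, it pairs trivially with all $C_j$, contrary to your claim about $C_{i_\sect\pm1}$. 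Hence neither after neck-stretching nor in the pavilion blow-up does its area involve $\alpha$ or $\beta$.

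\emph{The fallback target is inconsistent.} $\cf_1$ and $\cf_2$ are fibres of the same regulation, hence homologous. So no class meets them with multiplicities $(m_i,m_{i+2})$.

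\emph{A fixed linear bound cannot give the disjunction.} Area positivity of a single class with fixed intersection data is one linear inequality through the corner. It would also exclude points $(m_{i+1}/m_i+\delta,\,m_{i+1}/m_{i+2}-\eta)\in\square_{i+1}(2;1,1)$, which do embed. So it cannot produce ``$\alpha\le\cdot$ or $\beta\le\cdot$''.

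\paragraph{What works instead.} Use curves that meet $\cc$, for example the exceptional spheres $E_1,E_2$ of \cref{thm:structure_regulation}. Take a pavilion adapted to the direction $(2m_{i+2},m_{i+2}-m_i)$ of the outer edge of $\Delta_{2;1,1}(m_{i+1}/m_i,m_{i+1}/m_{i+2})$. By monotonicity it suffices to rule out $(m_{i+1}/m_i+\delta,\,m_{i+1}/m_{i+2}+\delta)$ for small $\delta>0$. The areas follow from $[\widetilde C]=\tfrac12(A+B)+\sum_j d_jC_j$ with $\bm d=M_\cc^{-1}\bm e_{i_\sect}$ and $\omega(A+B)=2$.

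For $i=0$, take the pavilion of \cref{fig:pavillion_choice}(b): edges $\ell_1(1,-1)$, $\ell_2(1,0)$, $\ell_3(3,1)$ running from $(0,y_0)$ to $t(2,1)$, with chain $(-1,-4,-1)$.
\begin{itemize}
\item Then $\bm d=-\tfrac12(1,1,1)$ and $\omega(\widetilde C)=1-\tfrac12(\ell_1+\ell_2+\ell_3)$.
\item Hence $\omega(E_2)=\omega(\widetilde C)-\ell_3=1-t$, which is negative since $t>\alpha>1$. This contradicts $E_2$ being $\widetilde J$-holomorphic.
\item When $t<1$, this $E_2$ is exactly the visible sphere over the segment of the $(2,1)$-ray of $\mathfrak{B}_0$ between the pavilion and the vertex $(2,1)$.
\end{itemize}
The geometry forces $t=3y_0-4\ell_1-\ell_2\le 3y_0$, i.e.\ essentially $\alpha\le3\beta$. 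This restriction on when the pavilion exists is what turns the linear bound $\alpha<1$ into the quadrant.

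This argument uses only \cref{thm:structure_regulation} for the given embedding, not a comparison with a model embedding. Minor point: $\vol(X)=1$, not $2$.
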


\begin{proof}
    This follows exactly by the same methods as \cite[Theorem 5.3.1]{ABEHS25}.
    The only difference is that we are only interested in the easiest branch of the weighted Markov tree, and hence the numerology is much easier. 
    See \cref{fig:first_mutations} for the first two relevant mutations.
    The obstruction comes from visible curves in the spirit of \cite{McDSie25}.
\end{proof}

\begin{corollary}
\label{cor:staircase_211}
    We have $\ca_{2;1,1} \cap (0,2+\sqrt{3})^2=\Stair(2;1,1)$.
\end{corollary}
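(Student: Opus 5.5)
The proof splits into the two inclusions $\Stair(2;1,1)\subseteq \ca_{2;1,1}$ (constructions) and $\ca_{2;1,1}\cap(0,2+\sqrt3)^2\subseteq \Stair(2;1,1)$ (obstructions). The obstruction half is essentially \cref{cor:obstruction_211_stair}. The construction half comes from almost toric mutations of the base diagram in \cref{fig:first_mutations}. Throughout we use that $\ca_{2;1,1}$ is closed under shrinking either parameter, since $E_{2;1,1}(\alpha',\beta')\subseteq E_{2;1,1}(\alpha,\beta)$ whenever $\alpha'\le\alpha$ and $\beta'\le\beta$. We also use the symmetry $(\alpha,\beta)\mapsto(\beta,\alpha)$: reflecting the diagram exchanges the two edges of $\Delta_{2;1,1}$, so this symmetry preserves $\ca_{2;1,1}$ and maps $\square_i$ to $\square_{-i}$, because $m_{-i}=m_{i+1}$.

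\emph{Constructions.} Starting from the middle diagram of \cref{fig:first_mutations}, I will show by induction on $i\ge0$ that $X$ admits an almost toric base diagram containing, at a vertex of the form $\Delta_{2;1,1}$, a triangle with edge lengths $m_{i+1}/m_i$ along the $(2,1)$-edge and $m_i/m_{i+1}$ along the $(0,1)$-edge. Removing an $\varepsilon$-neighbourhood of the opposite side then gives embeddings $E_{2;1,1}(\alpha,\beta)\hookrightarrow X$ for all $(\alpha,\beta)\in\square_i(2;1,1)$. The inductive step mutates at the vertex not at the singular corner, as in the passage to the right-hand diagram. The total area stays $2=\vol(X)$, so the new triangle has area $2$ and side lengths $(m_{i+2}/m_{i+1},\,m_{i+1}/m_{i+2})$. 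This requires checking that the mutation acts on the pair of lengths by the recursion $m_{i+2}=4m_{i+1}-m_i$. That is a computation with the affine monodromy of a node of the appropriate charge, analogous to the Markov-type mutation in \cite{ABEHS25} but with the number $4$ in place of $6$. Negative $i$ follow from the symmetry above.

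\emph{Obstructions.} Let $(\alpha,\beta)\in\ca_{2;1,1}\cap(0,2+\sqrt3)^2$, and suppose, possibly after applying the symmetry, that $\alpha\ge\beta$. Since $m_{i+1}/m_i$ increases to $2+\sqrt3$ for $i\ge0$, there is a unique $i\ge0$ with $m_{i+1}/m_i\le\alpha<m_{i+2}/m_{i+1}$ (for $\alpha<1$ the point already lies in $\square_0$). If $\alpha=m_{i+1}/m_i$ exactly, openness of the embedding condition (extensions exist by the convention on embeddings) lets us perturb $\alpha$ slightly upward, so we may assume $\alpha>m_{i+1}/m_i$. \cref{cor:obstruction_211_stair} then forces $\beta\le m_{i+1}/m_{i+2}$, and a small perturbation makes the inequality strict. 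Hence $(\alpha,\beta)\in\square_{i+1}$. Points with $\beta\ge\alpha$ are handled in the same way with the roles of the parameters exchanged, or directly by the symmetry.

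The main obstacle is the construction step: identifying the correct sequence of mutations in $X=\C P^1(1)\times\C P^1(1)$, with its two nodes on the branch cut, and verifying that each mutation is a genuine almost toric operation whose effect on the relevant vertex lengths is exactly the recursion for $m_i$. The first step is to compute the two mutations in \cref{fig:first_mutations} explicitly and read off the induced linear map on the edge lengths.
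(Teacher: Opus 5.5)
Your proposal is correct and follows the same route as the paper. The paper's proof simply cites \cref{cor:obstruction_211_stair} for the obstructions and the almost toric mutations and visible embeddings of \cite[Subsection 1.3]{ABEHS25} for the constructions. Your monotonicity, symmetry ($m_{-i}=m_{i+1}$) and openness arguments correctly fill in the bookkeeping the paper leaves implicit. The mutation at the vertex on the $(2,1)$-ray does act by $\alpha\mapsto 4-1/\alpha$, giving $(1,1)\mapsto(3,\tfrac13)\mapsto(\tfrac{11}{3},\tfrac{3}{11})\mapsto\cdots$.

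One small slip: in the paper's normalisation $\vol(X)=1$ and $\Delta_{2;1,1}(\alpha,\beta)$ has area $\alpha\beta$, so the triangles have area $1$, not $2$. This is only a consistency check and does not affect the argument.
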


\begin{proof}
    The obstructive part follows from \cref{cor:obstruction_211_stair}, whereas the constructive part follows from the usual almost toric methods, in particular visible embeddings.
    See for example \cite[Subsection 1.3]{ABEHS25}.
\end{proof}

Having proven the main results of this section let us come to the proofs of \cref{thm:existence_ruling} and \cref{thm:structure_regulation}. 
It will be important for us to prove both theorems with non-minimal choices of pavilions.
Therefore, we first investigate how the intersection matrix $M_\cc:=(C_i \cdot C_j)_{ij}$ and its inverse $(M_\cc)^{-1}$ change under blow-downs/ups.

\begin{lemma}
\label{lma:intmatrix_blow}
    Assume that $M_\cd$ is the intersection matrix of the chain $\cd=(D_1,\ldots,D_m)$ and $\cc$ is the chain obtained from $\cd$ by blowing up the node $D_r \cap D_{r+1}$, i.e.\ 
    $$
    \cc:=(D_1,\ldots,D_r-E,E,D_{r+1}-E,D_{r+2},\ldots,D_m)=(C_1,\ldots,C_{m+1}),
    $$
    then the entries of the inverse of the intersection matrix $M_{\cc}$ can be computed from $(M_{\cd})^{-1}$ via:
    $$
    (M_\cc)^{-1}_{ij}=\bm{u}_i^T (M_{\cd})^{-1} \bm{u}_j - \delta_{i,r+1}\delta_{r+1,j},
    $$
    where $\delta_{i,j}$ is the Kronecker delta and
    \begin{equation}
    \label{eq:U_collapsing_vectors}
    \bm{u}_i=
        \begin{cases}
            \bm{e}'_i & i\leq r \\
            \bm{e}'_r + \bm{e}'_{r+1}  & i=r+1\\
            \bm{e}'_{i-1} & i\geq r+2 
        \end{cases}
    .
    \end{equation}
    Here $\bm{e}'_i$ denotes the $i$th unit vector of length $m$.
    In matrix form this means $$(M_\cc)^{-1}=U^T(M_\cd)^{-1}U-\bm{e}_{r+1}\bm{e}_{r+1}^T,$$
    where $U=(\bm{u}_1 \, \cdots \, \bm{u}_{m+1})$ and $\bm{e}_i$ denotes the $i$th unit vector of length $m+1$.
\end{lemma}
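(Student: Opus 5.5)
The plan is to realise the blow-up as a linear change of basis in homology and then invert the resulting block matrix.

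Let $H_\cd$ be the span of the classes $D_1,\ldots,D_m$. After the blow-up, the classes $D_1,\ldots,D_m,E$ (pulled back to the blow-up) are orthogonal with respect to the intersection form, apart from the original pairings among the $D_i$. Since $E\cdot D_i=0$ and $E\cdot E=-1$, the intersection matrix on the basis $(D_1,\ldots,D_m,E)$ is block diagonal:
$$
N=\pmat{M_\cd & 0\\ 0 & -1}.
$$

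Next I write the new curves in this basis. Let $A$ be the $(m+1)\times(m+1)$ matrix whose $j$th column expresses $C_j$ in the basis $(D_1,\ldots,D_m,E)$. Reading off the definition of $\cc$:
\begin{itemize}
\item $C_i=D_i$ for $i<r$ and $C_i=D_{i-1}$ for $i>r+2$;
\item $C_r=D_r-E$, $C_{r+1}=E$ and $C_{r+2}=D_{r+1}-E$.
\end{itemize}
Then $M_\cc=A^TNA$, so
$$
M_\cc^{-1}=A^{-1}N^{-1}A^{-T},
$$
where $N^{-1}$ has blocks $M_\cd^{-1}$ and $-1$.

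It remains to compute $A^{-1}$, that is, to express each $D_i$ and $E$ in terms of the $C_j$. This inversion is elementary:
\begin{itemize}
\item $E=C_{r+1}$;
\item $D_r=C_r+C_{r+1}$ and $D_{r+1}=C_{r+1}+C_{r+2}$;
\item all other $D_i$ equal the corresponding $C_j$.
\end{itemize}

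Let $W:=A^{-T}$. This matrix sends $\bm{e}_j$ to the coordinate vector recording the coefficient of $C_j$ in each of $D_1,\ldots,D_m,E$. Concretely, the column $W\bm{e}_j$ splits as $\bm{u}_j\oplus\delta_{j,r+1}$. Indeed, $C_{r+1}$ appears in $D_r$, $D_{r+1}$ and $E$, which gives $\bm{u}_{r+1}=\bm{e}'_r+\bm{e}'_{r+1}$ together with a $1$ in the $E$-slot. Every other $C_j$ appears in exactly one $D_i$, which gives the shifted unit vectors in \eqref{eq:U_collapsing_vectors}.

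Substituting, the entry is
$$
(M_\cc^{-1})_{ij}=(W\bm{e}_i)^T N^{-1}(W\bm{e}_j)=\bm{u}_i^TM_\cd^{-1}\bm{u}_j-\delta_{i,r+1}\delta_{j,r+1},
$$
which is the claimed formula. The matrix form follows by stacking the columns into $U$.

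There is no real obstacle; the computation is purely linear algebra. The only delicate point is the indexing. Following the displayed formula for $\cc$, the curves $D_r-E$ and $D_{r+1}-E$ sit at positions $r$ and $r+2$, and $E$ at position $r+1$, so all indices $i\geq r+2$ shift down by one in \eqref{eq:U_collapsing_vectors}.

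I will also note that $E\cdot D_i=0$ holds because the $D_i$ here denote total transforms. This justifies the block-diagonal form of $N$, and it is exactly what produces the correction term $-\bm{e}_{r+1}\bm{e}_{r+1}^T$.
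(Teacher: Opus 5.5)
Your proof is correct and follows the paper's argument, which is the one-line base change from $(D_1,\ldots,D_m,E)$, with Gram matrix $\mathrm{diag}(M_\cd,-1)$, to $(C_1,\ldots,C_{m+1})$. You carry out that change of basis explicitly, and your identification of the columns of $A^{-T}$ as $\bm{u}_j\oplus\delta_{j,r+1}$ is right, so the correction term $-\bm{e}_{r+1}\bm{e}_{r+1}^T$ comes out as claimed.
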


\begin{proof}
    The formula is just the base change from $(D_1,\ldots,D_m,E)$ to $(C_1,\ldots,C_{m+1})$.
\end{proof}

\begin{remark}
    In essence \cref{lma:intmatrix_blow} says that $(M_\cc)^{-1}$ can be computed from $(M_\cd)^{-1}$ by inserting a row/column that is the sum of the two adjacent rows/columns and the diagonal entry is the sum of the adjacent $2 \times 2$ block minus $1$.
\end{remark}

Assume for the moment that $\cc$ is obtained from $\cd$ via a single blow-up, i.e.\ that $\cc$ is as in the setup of \cref{lma:intmatrix_blow}.
We denote by $\widetilde{C}$ the holomorphic curve in $\widetilde{X}$ that is obtained by stretching curves in the class $A:=[\C P^1 \times \{\text{pt}\}] \in H_2(X;\Z)$, as explained in \cite[Section 5.4]{ABEHS25}.
Set $B:=[\{\text{pt}\} \times  \C P^1]$.
We then write
\begin{equation}
\label{eq:expressions_C}
    \widetilde{C}=d_0 (A+B) + \sum_{j=1}^{m+1} d_j C_j \qquad\text{and}\qquad K_\cc = -2 (A+B) + \sum_{j=1}^{m+1} l_j C_j.
\end{equation}
Note that the class $A+B$ can be represented in $V$.
Define $\xi_i=\widetilde{C}\cdot C_i$, which means $\bm{\xi}=M_\cc \bm{d}$, where $\bm{d}=(d_1, \ldots, d_{m+1})^T$.
Also define $\mu:=\xi_{r+1}$, the intersection multiplicity of $\widetilde{C}$ with the exceptional sphere $E$.
It is not hard to see that 
$$
\bm{l}=(k_1,\ldots,k_r,k_r+k_{r+1}+1,k_{r+1}, \ldots,k_m)^T,
$$
where $\bm{k}$ is the discrepancy vector of $\cd$.
Define 
\begin{equation}
\label{eq:formal_int}
    \bm{\chi}:=(\xi_1,\ldots,\xi_{r-1},\xi_r+\mu,\xi_{r+2}+\mu,\xi_{r+3},\ldots,\xi_{m+1})^T,
\end{equation}
the vector of formal intersection multiplicities of the curve $\widetilde{C}$ with $\cd$ after blowing down $E$.
In particular, we have $U\bm{\xi}=\bm{\chi}$, where $U$ is the matrix introduced in \cref{lma:intmatrix_blow}.

\begin{lemma}
\label{lma:invar_expression}
    In this setup we have
    $$
    \bm{\xi}^T (M_\cc)^{-1} \bm{\xi} + \bm{l}^T\bm{\xi} = \bm{\chi}^T (M_\cd)^{-1} \bm{\chi} + \bm{k}^T\bm{\chi} - \mu(\mu-1).
    $$
\end{lemma}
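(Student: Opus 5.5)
The identity splits into a quadratic part and a linear part, and I will verify each separately. The quadratic part comes from \cref{lma:intmatrix_blow}; the linear part comes from the explicit form of $\bm{l}$ in terms of $\bm{k}$.

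\textbf{Quadratic term.} By \cref{lma:intmatrix_blow} we have $(M_\cc)^{-1}=U^T(M_\cd)^{-1}U-\bm{e}_{r+1}\bm{e}_{r+1}^T$. Using $U\bm{\xi}=\bm{\chi}$ (noted just before the statement) and $\bm{e}_{r+1}^T\bm{\xi}=\xi_{r+1}=\mu$, this gives
$$
\bm{\xi}^T(M_\cc)^{-1}\bm{\xi}=(U\bm{\xi})^T(M_\cd)^{-1}(U\bm{\xi})-\mu^2=\bm{\chi}^T(M_\cd)^{-1}\bm{\chi}-\mu^2 .
$$
Before using this I will confirm the identity $U\bm{\xi}=\bm{\chi}$ directly from \eqref{eq:U_collapsing_vectors}. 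The matrix $U$ has columns $\bm{u}_i$, so $(U\bm{\xi})_s=\sum_i (\bm{u}_i)_s\xi_i$. For $s=r$ the contributing columns are $\bm{u}_r$ and $\bm{u}_{r+1}$, giving $\xi_r+\mu$. For $s=r+1$ the contributing columns are $\bm{u}_{r+1}$ and $\bm{u}_{r+2}$, giving $\mu+\xi_{r+2}$. All other entries simply shift indices. This matches \eqref{eq:formal_int}.

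\textbf{Linear term.} I will show $\bm{l}=U^T\bm{k}+\bm{e}_{r+1}$. The entries of $U^T\bm{k}$ are $\bm{u}_i^T\bm{k}$, which equal $k_i$ for $i\le r$, $k_r+k_{r+1}$ for $i=r+1$, and $k_{i-1}$ for $i\ge r+2$. Adding $\bm{e}_{r+1}$ reproduces the stated $\bm{l}$. Hence
$$
\bm{l}^T\bm{\xi}=\bm{k}^TU\bm{\xi}+\xi_{r+1}=\bm{k}^T\bm{\chi}+\mu .
$$

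\textbf{Combining.} Adding the two parts yields
$$
\bm{\chi}^T(M_\cd)^{-1}\bm{\chi}+\bm{k}^T\bm{\chi}-\mu^2+\mu ,
$$
and $-\mu^2+\mu=-\mu(\mu-1)$, which is the claim.

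The only step that needs care is the formula for $\bm{l}$, which the paper asserts without proof. It follows from the adjunction formula for the blow-up. We have $K_{\widetilde X}=\pi^*K+E$, and the curve classes transform as $D_r=C_r+E$, $D_{r+1}=C_{r+1}+E$. Rewriting $\sum k_jD_j+E$ in the basis $(C_1,\ldots,C_{m+1})$ therefore gives coefficient $k_r+k_{r+1}+1$ on $E=C_{r+1}$, and leaves all other coefficients unchanged.
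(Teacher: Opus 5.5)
Your proof is correct and matches the paper's: the quadratic term comes from \cref{lma:intmatrix_blow} together with $U\bm{\xi}=\bm{\chi}$, and your identity $\bm{l}=U^T\bm{k}+\bm{e}_{r+1}$ is the matrix form of the paper's termwise expansion of $\bm{l}^T\bm{\xi}$. There is one small index slip in your justification of $\bm{l}$: since $C_{r+1}=E$, the second relation should read $D_{r+1}=C_{r+2}+E$, and this does not affect the conclusion.
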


\begin{proof}
    This is just a computation, which we do termwise.
    By \cref{lma:intmatrix_blow} we have 
    $$
    \bm{\xi}^T (M_\cc)^{-1} \bm{\xi}
    =\bm{\xi}^T(U^T(M_\cd)^{-1}U-\bm{e}_{r+1}\bm{e}_{r+1}^T)\bm{\xi}
    =(U\bm{\xi})^T(M_\cd)^{-1}(U\bm{\xi})-\xi_{r+1}^2
    =\bm{\chi}^T(M_\cd)^{-1}\bm{\chi}-\mu^2
    $$
    and
    $$
    \bm{l}^T\bm{\xi}=\sum_{i=1}^{r-1}k_i\xi_i+k_r(\xi_r+\mu)
    +k_{r+1}(\xi_{r+2}+\mu)+\sum_{i=r+2}^m k_i \xi_{i+1}+\mu=\bm{k}^T\bm{\chi}+\mu,
    $$
    which completes the proof.
\end{proof}

\begin{remark}
    We chose to focus on the case of node blow-ups after \cref{lma:intmatrix_blow}, since the case of blowing up a smooth point on one of the chain components is easier.
    Importantly, the formula given in \cref{lma:invar_expression} continues to hold.
    For a blow-up at a smooth point of $D_r$, the new exceptional column, in the notation of \cref{lma:intmatrix_blow}, is $\bm u_{r+1}=\bm e'_r$, its discrepancy is $k_r+1$, and the formal blow-down replaces $\xi_r$ by $\xi_r+\mu$ while leaving all other surviving entries unchanged.
\end{remark}

Recall that in the general case $\cc$, the chain of spheres that is defined by the choice of pavilion, is obtained from the minimal resolution of the $A_1$-surface singularity, i.e.\ the resolution that introduces a single exceptional $(-2)$-curve $\Sigma_-$, by a sequence of blow-ups.
We denote the formal intersection multiplicity of $\widetilde{C}$ with $\Sigma_-$ by $\chi \in \N$.
This number is obtained by iterating \eqref{eq:formal_int}.
From the expressions in \eqref{eq:expressions_C} and the computations in the proof of \cref{lma:invar_expression} we obtain
\begin{equation}
\label{eq:Csquare_KC}
    \widetilde{C}^2=2d_0^2 + \bm{\xi}^T (M_\cc)^{-1} \bm{\xi} = 2d_0^2 -\frac{\chi^2}{2} - \sum\nolimits_\nu \mu_\nu^2; \quad K_\cc \cdot \widetilde{C} = -4 d_0 + \bm{l}^T \bm{\xi} = -4d_0 + \sum\nolimits_\nu \mu_\nu,
\end{equation}
where the $\mu_\nu \in \N$ are iteratively defined through the sequence of blow-downs, as in the discussion after equation \eqref{eq:expressions_C}.
Recall that the $A_1$-surface singularity is crepant and so its discrepancy vanishes.
Hence, the adjunction formula for $\widetilde{C}$ reads
\begin{equation}
\label{eq:adj_C}
    \sum_{x \in \text{Sing}(\widetilde{C})} \delta_x = \frac{\widetilde{C}^2 + K_\cc \cdot \widetilde{C}}{2}+1
    = d_0^2 - 2d_0 - \frac{1}{2}\left(\frac{\chi^2}{2} + \sum\nolimits_\nu \mu_\nu(\mu_\nu-1)\right)+1,
\end{equation}
using \eqref{eq:Csquare_KC}.
Because $0 < d_0 \leq 1/2$, as can be shown completely analogous to \cite[Lemma 5.4.2]{ABEHS25}, we have that $-3/4\leq d_0^2 - 2d_0<0$ and therefore $\text{Sing}(\widetilde{C}) = \emptyset$, i.e.\ $\widetilde{C}$ is embedded, because the term in the bracket on the right hand side of \eqref{eq:adj_C} is non-negative and the $\delta_x$ are non-negative integers.
This argument is completely analogous to \cite[Proposition~5.4.3]{ABEHS25}, and arguing as in \cite[Proposition~5.4.5]{ABEHS25} we can assume that $\widetilde{C}$ is a square $0$ curve.
In particular, we have that the adjunction formula \eqref{eq:adj_C} for $\widetilde{C}$ becomes 
\begin{equation}
\label{eq:adj_C_updated}
    1 = 2d_0 - d_0^2 + \frac{\chi^2}{4} + \frac{1}{2}\sum\nolimits_\nu \mu_\nu(\mu_\nu-1).
\end{equation}

\begin{lemma}
    We have $\chi=1$.
\end{lemma}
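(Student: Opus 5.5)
We work directly from the updated adjunction formula \eqref{eq:adj_C_updated}, $1 = 2d_0 - d_0^2 + \chi^2/4 + \tfrac12\sum_\nu \mu_\nu(\mu_\nu-1)$, together with the bound $0<d_0\le 1/2$ (proved as in \cite[Lemma 5.4.2]{ABEHS25}) and integrality. Since $\chi,\mu_\nu\in\N$, every term $\mu_\nu(\mu_\nu-1)$ is a non-negative even integer. Hence $\tfrac12\sum_\nu\mu_\nu(\mu_\nu-1)$ is a non-negative integer. On $(0,1/2]$ the function $d_0\mapsto 2d_0-d_0^2$ is increasing and takes values in $(0,3/4]$.

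The first step is to rule out large $\chi$. If $\chi\ge 2$, then $\chi^2/4\ge 1$. The right-hand side of \eqref{eq:adj_C_updated} is then strictly greater than $1$, because $2d_0-d_0^2>0$, which is a contradiction. Therefore $\chi\in\{0,1\}$.

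The second step, which I expect to be the main obstacle, is to exclude $\chi=0$. With $\chi=0$ the formula reads $1-(2d_0-d_0^2)=\tfrac12\sum_\nu\mu_\nu(\mu_\nu-1)$. The left-hand side lies in $[1/4,1)$, while the right-hand side is an integer, so it must vanish. This forces $2d_0-d_0^2=1$, i.e.\ $d_0=1$, contradicting $d_0\le 1/2$. So arithmetic alone excludes $\chi=0$, with no geometric input needed.

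Should the bound on $d_0$ be less clean, I would instead exclude $\chi=0$ geometrically. In that case $\widetilde{C}$ would be formally disjoint from $\Sigma_-$, so $\widetilde{C}$ would be disjoint from the whole chain $\cc$ after blow-down. Then $\widetilde C\cdot(A+B)$-type positivity, or \cref{thm:existence_ruling} (intersection with $C_{i_\sect}$ equal to one), would give the contradiction.

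With $\chi=1$ the formula becomes $3/4=2d_0-d_0^2+\tfrac12\sum_\nu\mu_\nu(\mu_\nu-1)$. This forces $d_0=1/2$ and all $\mu_\nu\in\{0,1\}$, which is consistent, and the lemma follows.
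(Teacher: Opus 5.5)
Your argument is correct and is essentially the paper's proof. It uses the same case split, with $\chi\geq 2$ making the right-hand side of \eqref{eq:adj_C_updated} exceed $1$, and $\chi=0$ forcing the non-negative integer $\tfrac12\sum_\nu\mu_\nu(\mu_\nu-1)$ into $[1/4,1)$, relying only on $2d_0-d_0^2\in(0,3/4]$. The geometric fallback is unnecessary, and invoking \cref{thm:existence_ruling} there would be circular, since this lemma is a step in that theorem's proof.
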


\begin{proof}
    If $\chi=0$, then the adjunction formula \eqref{eq:adj_C_updated} together with $2d_0-d_0^2 \in (0,3/4]$ gives
    $$
    \frac12\sum_\nu\mu_\nu(\mu_\nu-1)
    =1-2d_0+d_0^2\in[1/4,1),
    $$
    contradicting the integrality of the left-hand side.
    If $\chi \geq 2$ then
    $$
    1=2d_0-d_0^2 + \frac{\chi^2}{4} + \frac{1}{2}\sum\nolimits_\nu \mu_\nu(\mu_\nu-1) \geq 2d_0-d_0^2 + 1 > 1,
    $$
    which is also a contradiction and hence we conclude $\chi=1$.
\end{proof}

\begin{corollary}
    We have $\bm{\xi}=\bm{e}_{i_\sect}$, i.e.\ $\widetilde{C}$ intersects $\cc$ exactly once and the intersection is with the proper transform of $\Sigma_-$, which is $C_{i_\sect}$.
\end{corollary}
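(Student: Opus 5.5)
Plug $\chi=1$ into the adjunction formula \eqref{eq:adj_C_updated}. It becomes
$$
\tfrac34 = 2d_0-d_0^2+\tfrac12\sum\nolimits_\nu \mu_\nu(\mu_\nu-1).
$$
Since $0<d_0\le 1/2$, we have $2d_0-d_0^2\in(0,3/4]$. Each summand $\mu_\nu(\mu_\nu-1)/2$ is a non-negative integer, so the sum is an integer in $[0,3/4)$. Hence every $\mu_\nu\in\{0,1\}$ and $d_0=1/2$.

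We then show that $\bm\xi=\bm e_{i_\sect}$ by induction along the sequence of blow-ups that produces $\cc$ from the minimal resolution. We run the induction backwards from $\cc$, applying the formal blow-down \eqref{eq:formal_int} one step at a time. Positivity of intersections for the $\widetilde J$-holomorphic curve $\widetilde C$ with the $\widetilde J$-holomorphic exceptional curves $C_i$ gives $\xi_i\ge 0$ for all $i$. At each step the formal vector $\bm\chi=U\bm\xi$ again has non-negative entries, and every entry of $\bm\xi$ contributes to some entry of $\bm\chi$: the new exceptional entry $\mu$ is added to one or two neighbours, and all other entries are carried over. Iterating down to the minimal resolution, whose chain is the single curve $\Sigma_-$, we get
$$
\chi=\sum_i c_i\xi_i
$$
with integer coefficients $c_i\ge 1$. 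This holds because each column $\bm u_i$ in \eqref{eq:U_collapsing_vectors} is a non-zero $0/1$ vector, and a composite of such non-negative matrices with no zero columns has no zero columns. With $\chi=1$ and all $\xi_i\ge 0$, exactly one $\xi_{i_0}$ equals $1$, the others vanish, and $c_{i_0}=1$.

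The remaining step, which I expect to be the main obstacle, is to identify $i_0=i_\sect$. The plan is to track the coefficients $c_i$. The entry of the proper transform of $\Sigma_-$ is carried through every blow-down with coefficient $1$. Any curve $C_i$ created by a blow-up is either an exceptional entry itself or, after later blow-ups, a proper transform of one; its intersection passes to $\mu$ and is then added into its neighbours. So it should enter $\chi$ with coefficient $c_i\ge 2$, or contribute a $\mu_\nu\ge 1$ that must be reconciled with the bound $\mu_\nu\le 1$.

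The difficulty is that the coefficient along a chain of blow-ups that reaches $\Sigma_-$ only through one side can be exactly $1$; for example, a first blow-up at a smooth point of $\Sigma_-$ gives $c=1$. If that happens, I will switch arguments. The chains here are those of Delzant pavilions covering the $A_1$ resolution, which are blow-ups at nodes, with an adjacent curve always present by assumption. I will then compare the area of $\widetilde C$, namely $\widetilde\omega(\widetilde C)=\omega(A)$, against the areas of the curves $C_i$ and the class constraint $d_0=1/2$. Writing $\widetilde C=\tfrac12(A+B)+\sum_j d_jC_j$ with $\bm d=M_\cc^{-1}\bm e_{i_0}$, I will check, using \cref{lma:intmatrix_blow} inductively, that this class has integral pairing with $H_2$ of the complement $V$ (equivalently, that it is a genuine integral class) only when $i_0=i_\sect$. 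The half-integral coefficient $\tfrac12$ must be cancelled by the $\tfrac12$ in $(M_\cc)^{-1}_{i_\sect i_\sect}$ coming from the $(-2)$-curve, whereas the entries of $M_\cc^{-1}$ in the column of any other $i_0$ fail to produce this cancellation.
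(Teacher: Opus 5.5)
Your first step is correct. With $\chi=1$, \eqref{eq:adj_C_updated} forces $d_0=\tfrac12$ and $\mu_\nu\in\{0,1\}$. Your positivity and column argument also correctly yields $\bm{\xi}=\bm{e}_{i_0}$ for a single index $i_0$ with $c_{i_0}=1$.

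\textbf{The gap.} The identification $i_0=i_\sect$ is left open, and the integrality route you propose cannot close it. Iterating \cref{lma:intmatrix_blow} (and its smooth-point analogue) from $(-2)^{-1}=-\tfrac12$ gives $(M_\cc)^{-1}_{ij}\equiv-\tfrac12 c_ic_j \pmod{\Z}$. So whenever $c_{i_0}=1=c_{i_\sect}$, the $i_0$-th and $i_\sect$-th columns of $(M_\cc)^{-1}$ differ by an integral vector. In other words, the two candidate classes differ by an integral combination of the $C_j$, and integrality cannot tell them apart.

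\textbf{A concrete counterexample to the integrality route.} Take the chain $(C_1,C_2,C_3)$ obtained by blowing up two distinct smooth points of $\Sigma_-$. The first blow-ups from the one-curve chain are necessarily at smooth points, so the pavilion structure does not exclude this. Here $C_2=C_{i_\sect}$ has self-intersection $-4$, $C_1$ and $C_3$ are $(-1)$-curves, and $(M_\cc)^{-1}=-\tfrac12\bm{c}\bm{c}^T-\bm{e}_1\bm{e}_1^T-\bm{e}_3\bm{e}_3^T$ with $\bm{c}=(1,1,1)^T$.
\begin{itemize}
\item $\bm{\xi}=\bm{e}_2$ gives $\bm{d}=-\tfrac12(1,1,1)^T$.
\item $\bm{\xi}=\bm{e}_1$ gives $\bm{d}=(-\tfrac32,-\tfrac12,-\tfrac12)^T$.
\end{itemize}
The two classes differ by $-C_1$, so both are integral or neither is. The second choice has $\mu$-values $0$ and $1$, $d_0=\tfrac12$ and $\chi=1$, and it satisfies \eqref{eq:adj_C_updated}. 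Nothing you have used excludes it. The area comparison you mention is only sketched and does not fill this in.

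\textbf{The missing input} is that $\widetilde{C}$ has square zero. This is established before \eqref{eq:adj_C_updated}, and you never use it. With $\chi=1$, the first identity in \eqref{eq:Csquare_KC} reads $0=2d_0^2-\tfrac12-\sum_\nu\mu_\nu^2$. Since $0<d_0\le\tfrac12$, both $2d_0^2-\tfrac12$ and $-\sum_\nu\mu_\nu^2$ are non-positive. Hence $d_0=\tfrac12$ and $\mu_\nu=0$ for every $\nu$. In the example above, $\bm{\xi}=\bm{e}_1$ gives $\widetilde{C}^2=\tfrac12-\tfrac32=-1$, so it is ruled out.

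\textbf{Finishing from $\mu_\nu=0$.} Each formal blow-down \eqref{eq:formal_int} now just deletes a zero entry and leaves the others unchanged. Every $C_j$ with $j\neq i_\sect$ is the contracted curve at some stage, and at that stage its entry is still its original $\xi_j$. Hence $\xi_j=0$ for all $j\neq i_\sect$, and $\xi_{i_\sect}=\chi=1$. This is the paper's argument, and it makes the coefficients $c_i$ and any area or integrality considerations unnecessary.
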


\begin{proof}
    Since $\widetilde{C}$ is a square zero curve and $\chi=1$ we have by \eqref{eq:Csquare_KC} that
    $$
    0=2d_0^2 - \frac{1}{2} - \sum\nolimits_\nu \mu_\nu^2.
    $$
    As $0<d_0\leq 1/2$, both terms
    $2d_0^2-1/2$ and $-\sum_\nu\mu_\nu^2$ are nonpositive.
    This implies $d_0=1/2$ and $\mu_\nu=0$ for every $\nu$.
    Since every $\mu_\nu$ vanishes, the formal blow-down rule
    at each step simply deletes the zero entry corresponding
    to the contracted component and leaves all other entries
    unchanged. After all blow-downs, only the entry corresponding
    to $C_{i_\sect}$ remains, and its value is $\chi=1$.
    Hence $\xi_{i_\sect}=1$ and $\xi_j=0$ for every
    $j\neq i_\sect$.
\end{proof}

This proves \cref{thm:existence_ruling}, and \cref{thm:structure_regulation} is an immediate consequence: the two "tails" of $$\cc=(C_1,\ldots,C_{i_\sect-1},C_{i_\sect},C_{i_\sect+1}, \ldots, C_m),
$$
i.e.\ $(C_1,\ldots,C_{i_\sect-1})$ and $(C_{i_\sect+1}, \ldots, C_m)$, have to appear as irreducible components of a broken ruling.
Moreover, both of the tails are derived from a single exceptional $(-1)$-sphere.
Therefore, the existence of the exceptional $(-1)$-spheres, attached to a unique sphere of the tails, is forced.

\section{Proving \texorpdfstring{$\ca_{2;1,1}=\ce_{2;1,1}$}{A211=E211}}
\label{sec:A211=E211}

As in the previous section, denote $X := \C P^1(1) \times \C P^1(1)$, $U:=\iota(E_{2;1,1}(\alpha,\beta))$ and $V:=X\setminus U$.
The equality $\ca_{2;1,1}=\ce_{2;1,1}$ is a consequence of the following theorem, whose proof is the main content of this section.

\begin{theorem}
\label{thm:quadric_complement}
    Assume that $\iota:E_{2;1,1}(\alpha,\beta) \hookrightarrow X$ is a symplectic embedding. 
    Then there exists an embedded symplectic $(+2)$-sphere $S$ in $V$.
\end{theorem}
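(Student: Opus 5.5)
We plan to obtain $S$ from the regulation of \cref{thm:structure_regulation} by neck-stretching, as the paper indicates. Choose a pavilion $(\bm{\rho},\bm{\lambda})$ inside $\Delta_{2;1,1}(\alpha+\varepsilon,\beta+\varepsilon)\setminus\Delta_{2;1,1}(\alpha,\beta)$ and form the pavilion blow-up $\widetilde{X}$ with $\widetilde{J}$ generic on $V$. By \cref{thm:existence_ruling} and \cref{thm:structure_regulation}, $\widetilde{X}$ carries a $\widetilde{J}$-holomorphic regulation by square-zero spheres in the class of $\widetilde{C}$. Here $C_{i_\sect}$ is a section, and the two broken rulings $\cf_1,\cf_2$ consist of the tails of $\cc$ together with exceptional spheres $E_1,E_2$. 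The sphere we seek should be a section of this regulation that is disjoint from $\cc$ and lies in the class
$$
\widetilde{S}=A+B-\sum_j d_j' C_j ,
$$
which is the class of $\Sigma_0$ "at infinity", i.e.\ the class represented in $V$. Since $A+B$ is represented in $V$, it pairs to zero with every $C_j$. It therefore pairs to one with $\widetilde{C}$ (because $d_0=1/2$ and $\widetilde{C}\cdot(A+B)=2d_0$), and has square $2$. This class also satisfies $K\cdot\widetilde{S}=-4$, which matches the adjunction formula for an embedded $(+2)$-sphere.

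The construction of $S$ proceeds in three steps.

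\textbf{Step 1: find the curve.} Before stretching the neck, curves in the class $A+B$ through three generic points of $X$ exist, because the Gromov--Witten invariant of $A+B$ with three point constraints is $1$. We place the three points in $V$ and stretch along $\partial U$, or equivalently work directly in $\widetilde{X}$ with $\widetilde{J}$ generic on $V$ and the curves $C_j$ holomorphic. The limit gives a $\widetilde{J}$-holomorphic representative of some class $A+B+\sum_j e_jC_j$ through three generic points of $V$, possibly with components in the $C_j$ or in the fibres of the regulation.

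\textbf{Step 2: control the limit.} Using positivity of intersections with the holomorphic curves $C_j$, $E_1$, $E_2$ and with the fibres of the regulation, we show the limit is irreducible and disjoint from $\cc$. The intersection number with a fibre is $1$, so exactly one component is a section and all other components lie in fibres. A fibre component cannot pass through the generic points without contradicting the dimension count: a section through three points, with its index computed via $c_1$, leaves no room. Fibre components inside broken rulings could only be $C_j$ or $E_i$. We rule these out by computing the intersection of the section component with $C_j$, modelled on \cref{lma:invar_expression} and the argument for $\chi=1$. The section component has square at most $2$ and passes through three generic points, so its index forces square exactly $2$ and zero intersection with every $C_j$.

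\textbf{Step 3: embeddedness and conclusion.} Adjunction, as in \eqref{eq:adj_C}, then shows the resulting curve is an embedded $(+2)$-sphere. Since it is disjoint from the pavilion configuration, it lives in $V$ after blowing down, which gives $S$.

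The main obstacle is Step 2: excluding limits that contain multiply covered pieces or components of $\cc$ and the $E_i$. This requires a careful positivity and index computation with the inverse intersection matrix, extended iteratively through the blow-ups via \cref{lma:intmatrix_blow}. My fallback is to build $S$ by hand from the regulation instead. One would take the section $C_{i_\sect}$ and the broken rulings, and use the symplectic structure of a ruled surface with two broken fibres ($\widetilde{X}$ is $\C P^1\times\C P^1$ blown up) to produce a disjoint section of square $+2$ whose area equals $\omega(A+B)$ minus the appropriate contributions. This would be done by a symplectic inflation-type or explicit fibrewise argument, checking via the area computation that the resulting sphere avoids the region corresponding to $U$.
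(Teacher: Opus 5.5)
Your proposal is correct in outline, but it does not follow the paper's route. The paper does no further curve analysis after \cref{thm:structure_regulation}. It blows down the tails of $\ct$ to $(F_1,\Sigma_-,F_2)$, where $D=F_1+\Sigma_-+F_2$ has $D\cdot F_j=1$, $D\cdot\Sigma_-=0$ and $D^2=2$. It pulls $D$ back via \cref{lma:num_criterion_under_blow} to a divisor $\sum_j a_jC_j+b_1E_1+b_2E_2$ on $\ct$ that pairs to $0$ with every $C_j$ and to $1$ with $E_1,E_2$. The Mu\~noz--Rojo smoothing theorem (\cref{thm:neigh_divisor_smoothing}) then gives a symplectic surface in $\nu(\ct)\setminus\cc\subseteq V$. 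This surface is connected by \cref{rmk:neigh_divisor_smoothing_connectedness} and is a sphere by adjunction.

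That pulled-back class is exactly yours. It is orthogonal to all $C_j$ and has square $2$, so it equals $A+B$; your terms $d_j'$ vanish because $M_\cc$ is negative definite. So the paper smooths a nodal representative of $A+B$ lying on $\ct$, while you look for a $\widetilde{J}$-holomorphic one through three points. Yours is the ``classical neck-stretching'' proof mentioned in the paper's remark after the proof.

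What each route buys: the paper's needs no compactness or transversality beyond \cref{thm:structure_regulation} and does not care whether $\ct$ is chain-shaped. In exchange it imports an external theorem and must extract connectedness from that theorem's proof. Yours stays inside $J$-holomorphic curve theory and gives connectedness and genus $0$ for free. Your fallback is not needed, and as stated it is not yet an argument.

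Two things must be supplied for your route.

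First, Step~1 is cleanest directly in $\widetilde{X}$, rather than claiming it is ``equivalent'' to stretching in $X$. Blowing down as in the paper turns $\widetilde{X}$ into an $S^2\times S^2$ in which $A+B$ is the pull-back of the $(1,1)$-class $\Sigma_-+F_1+F_2$. The blow-up formula then gives $GW^{\widetilde{X}}_{A+B}(\mathrm{pt},\mathrm{pt},\mathrm{pt})=1$, and Gromov compactness gives a $\widetilde{J}$-stable map through the three points.

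Second, Step~2 as you phrase it is circular. You cannot first rule out fibre components through the points by an index count and only afterwards use ``square at most $2$''. Other components, such as $C_j$ with $C_j^2\le -3$, have negative $c_1$. Instead, write the stable map as $\sigma+n[\widetilde{C}]+N$. Here $\sigma$ is the unique section component, $n$ is the total multiplicity of smooth-fibre components, and $N=\sum_{j\ne i_\sect}n_jC_j+n_1'E_1+n_2'E_2$.

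If $\sigma=C_{i_\sect}$, pairing with $A+B$ gives $n+n_1'+n_2'=2$. Each generic point needs its own smooth fibre, so at most two of the three points are hit, a contradiction.

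Otherwise, use $(A+B)\cdot\widetilde{C}=(A+B)\cdot E_i=1$ and Zariski's lemma on the two broken fibres to get
\[
  \sigma^2=2-2n-2(n_1'+n_2')+N^2\le 2-2n .
\]
On the other hand, $\sigma$ is an embedded section through $k\ge 3-n$ generic points. Regularity for $\widetilde{J}$ generic on $V$ then gives $\sigma^2=c_1(\sigma)-2\ge k-1\ge 2-n$.

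Hence $n=0$, $n_1'=n_2'=0$ and $N^2=0$. A square-zero class supported on a broken fibre is a multiple of $[\widetilde{C}]$, in which $E_i$ has coefficient $1$; since $N$ has no $E_i$-part, $N=0$. So the limit is a single embedded section in class $A+B$. It is disjoint from $\cc$ by positivity of intersections, and therefore lies in $\widetilde{X}\setminus\cc\subseteq V$.
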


Before proving \cref{thm:quadric_complement} let us formulate the main corollary and prove it.

\begin{corollary}
    We have that $\ca_{2;1,1}= \ce_{2;1,1}$.
\end{corollary}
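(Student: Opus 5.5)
We prove the two inclusions separately, using \cref{thm:quadric_complement} for the nontrivial one.

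For $\ce_{2;1,1}\subseteq\ca_{2;1,1}$, let $(\alpha,\beta)\in\ce_{2;1,1}$, with embedding $E_{2;1,1}(\alpha,\beta)\hookrightarrow B_{2;1,1}(1)$. By the footnote convention it extends to an open neighbourhood, so its image lies in the interior of $B_{2;1,1}(1)$. By \cref{cor:211_ellipsoids} and \cref{rmk:BTS2}, this interior is the open codisc bundle of $\tau\lvert\cdot\rvert_{g_0^*}$. That open bundle embeds into $X=\C P^1(1)\times\C P^1(1)$ as the complement of the divisor created by the symplectic cut. Composing gives $(\alpha,\beta)\in\ca_{2;1,1}$.

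For $\ca_{2;1,1}\subseteq\ce_{2;1,1}$, let $\iota\colon E_{2;1,1}(\alpha,\beta)\hookrightarrow X$, extended to $\hat\iota$ on $E_{2;1,1}(\alpha+\varepsilon,\beta+\varepsilon)$. Apply \cref{thm:quadric_complement} to $\hat\iota$. This gives an embedded symplectic $(+2)$-sphere $S$ disjoint from $\hat\iota(E_{2;1,1}(\alpha+\varepsilon,\beta+\varepsilon))$, and hence at positive distance from the compact set $U=\iota(E_{2;1,1}(\alpha,\beta))$.

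Let $\Sigma_0\subseteq X$ be the $(+2)$-sphere coming from the symplectic cut of $B_{2;1,1}(1)$. Then $X\setminus\Sigma_0$ is symplectomorphic to the interior of $B_{2;1,1}(1)$. By Gromov's uniqueness result \cite[Section 2.4.E]{Gro85}, as used in the proof of \cref{prop:ETS2_E211}, $S$ and $\Sigma_0$ are symplectically isotopic. Since $H^1_{\mathrm{dR}}(X;\R)=0$, they are in fact Hamiltonian isotopic. We therefore obtain $\Phi\in\ham(X)$ with $\Phi(S)=\Sigma_0$.

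Then $\Phi\circ\hat\iota$ restricted to a neighbourhood of $E_{2;1,1}(\alpha,\beta)$ lands in $X\setminus\Sigma_0\cong\Interior B_{2;1,1}(1)$. This is a symplectic embedding in the sense of the footnote convention, so $(\alpha,\beta)\in\ce_{2;1,1}$.

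The main obstacle is that one needs $S$ and $\Sigma_0$ to be homologous, or at least to lie in the same orbit under $\symp(X)$, for Gromov's uniqueness to apply. An embedded symplectic sphere of square $+2$ in monotone $\C P^1\times\C P^1$ must have class $\pm(A+B)$ or another class of square $2$. By adjunction, an embedded sphere satisfies $c_1(S)=S^2+2=4$. The only classes with square $2$ and $c_1=4$ are $A+B$ itself. The positivity of $\omega$ on $S$ then fixes the sign. So the classes match, and this confirms the identification. If \cref{thm:quadric_complement} already produces $S$ in class $A+B$, for instance via the ruling curves $\widetilde C$ from \cref{thm:existence_ruling} with $d_0=1/2$, this step is immediate.
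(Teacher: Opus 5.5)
Your proposal is correct and follows the paper's proof for $\ca_{2;1,1}\subseteq\ce_{2;1,1}$: you use \cref{thm:quadric_complement}, Gromov's uniqueness of $(+2)$-spheres up to Hamiltonian isotopy, and the identification of the complement of the cut divisor with $\Interior B_{2;1,1}(1)$, exactly as the paper does. Your adjunction check that $[S]=A+B$ fills in a detail the paper leaves implicit. For $\ce_{2;1,1}\subseteq\ca_{2;1,1}$, the paper pushes the image into the interior by Liouville rescaling into $B_{2;1,1}(\lambda)$ with $\lambda<1$. You instead get this directly from the open-neighbourhood convention: an equidimensional symplectic embedding of an open set has open image, so it misses $\partial B_{2;1,1}(1)$. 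This is equally valid.
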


\begin{proof}
    Assume that $\iota:E_{2;1,1}(\alpha,\beta) \hookrightarrow X$ is a symplectic embedding, i.e.\ that $(\alpha,\beta) \in \ca_{2;1,1}$.
    By \cref{thm:quadric_complement} there exists an embedded symplectic $(+2)$-sphere $S \subset V$.
    It is well known that such spheres are unique up to Hamiltonian isotopy. 
    See for example \cite[Section 2.4.E]{Gro85}, as in the proof of \cref{prop:ETS2_E211}.
    Therefore, there exists a Hamiltonian diffeomorphism $\Phi \in \text{Ham}(X)$ such that $\Phi(S)=\Delta$, where $\Delta$ is the diagonal sphere.
    $X\setminus\Delta$ is symplectomorphic to $\Interior\left(D^*\left(S^2,\tau \lvert\cdot \rvert_{g_0^*}\right)\right)$, as shown, for example, in \cite{Aud07,OaUs16}, which, in turn, is symplectomorphic to $\Interior(B_{2;1,1}(1))$ by \cref{cor:211_ellipsoids}.
    This means that $(\alpha,\beta) \in \ce_{2;1,1}$, i.e.\ $\ca_{2;1,1}\subseteq \ce_{2;1,1}$.
    
    The opposite inclusion $\ca_{2;1,1}\supseteq \ce_{2;1,1}$ follows by a scaling argument, where scaling means scaling by the Liouville flow.
    Assume that $E_{2;1,1}(\alpha,\beta) \hookrightarrow B_{2;1,1}(1)$.
    Then there exists a small $\varepsilon >0$ such that $E_{2;1,1}(\alpha+\varepsilon,\beta+\varepsilon) \hookrightarrow B_{2;1,1}(1)$.
    Hence, for $0<\lambda<1$ we obtain an embedding 
    $$E_{2;1,1}(\lambda(\alpha+\varepsilon),\lambda(\beta+\varepsilon)) \hookrightarrow B_{2;1,1}(\lambda) \subseteq B_{2;1,1}(1)\setminus \partial B_{2;1,1}(1)$$
    and after compactifying this means in particular $E_{2;1,1}(\alpha,\beta) \hookrightarrow X$.
    This finishes the proof.
\end{proof}

For proving \cref{thm:quadric_complement} we have to construct a symplectic divisor in the complement of the embedded $(2;1,1)$-ellipsoid $U$.
If the configuration $\ct$ provided by \cref{thm:structure_regulation} consisting of the two broken rulings and the proper transform of the section is chain-shaped, then the $(+2)$-sphere in the complement can easily be constructed by toric methods, as shown in \cref{fig:quadric_complement}.
If not, we need other techniques to construct the $(+2)$-sphere.

\begin{figure}[htb]
  \begin{center}   
    \begin{tikzpicture}
        \begin{scope}
            \clip (0,4.5) -- (0,3.05) -- (1.9,1.15) -- (2.1,1.15) -- (2.7,1.35) -- (9,4.5) -- (0,4.5);
            \draw[lightgray, opacity=0.75, line width=30pt, line cap=round]
                (0,4.5) -- (0,3.05) -- (1.9,1.15) -- (2.1,1.15) -- (2.7,1.35) -- (9,4.5);
        \end{scope}
        \draw[thick] (15pt,4.5) -- (0,4.5) -- (0,3.05) -- (1.9,1.15) -- (2.1,1.15) -- (2.7,1.35) -- (9,4.5) -- (7.8,4.5);
        \draw[thick, blue]
            (0,4.5)
            .. controls (0.45,4.50) and (0.22,3.55) .. (0.55,3.02)
            .. controls (0.85,2.55) and (1.35,1.85) .. (1.92,1.45)
            .. controls (2.08,1.34) and (2.28,1.36) .. (2.52,1.48)
            .. controls (3.35,1.90) and (5.35,2.95) .. (6.85,3.70)
            .. controls (7.85,4.20) and (8.55,4.50) .. (9,4.50);
    \end{tikzpicture}
    \caption{A toric fibration on a normal neighbourhood $\co(\ct)$ of the chain of symplectic spheres $\ct=(E_1,C_1,C_\sect,C_3,E_2)$. In blue an embedded symplectic $(+2)$-sphere in the neighbourhood $\co(\ct)$, as a visible surface. The figure uses the pavilion shown in \cref{fig:pavillion_choice}.}
    \label{fig:quadric_complement}
  \end{center}
\end{figure}

The following theorem is due to Mu{\~n}oz and Rojo.

\begin{theorem}[\normalfont{\cite[Theorem 5.4]{MuRo25}}]
\label{thm:neigh_divisor_smoothing}
    Let $(X,\omega)$ be a symplectic $4$-manifold, and suppose that $C_j \subseteq (X,\omega)$ are symplectic surfaces that intersect transversely and positively.
    Assume that $D=\sum_j a_j C_j$ is a symplectic divisor, i.e.\ a finite formal sum with $a_j \in \N$, that satisfies $a_j > 0$ and $[D]\cdot [C_j] \geq 0$ for all $j$.
    
    Define $\cc:=\cup_j C_j$.
    Then, given a tubular neighbourhood $\nu(\cc)$, there exists a symplectic surface $\Sigma \subseteq \nu(\cc)$ that represents $[D]$.
    Moreover, $\Sigma$ intersects each $C_j$ transversely and positively, which means in particular that $\Sigma$ is disjoint from $C_j$ if $[D]\cdot[C_j]=0$.
\end{theorem}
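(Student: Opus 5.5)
The plan is to build $\Sigma$ as the zero set of a perturbed section of a line bundle, inside a standard local model of the neighbourhood $\nu(\cc)$.

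\textbf{Step 1: the model.} Since the $C_j$ are symplectic and meet transversely and positively, we first isotope them so that they meet $\omega$-orthogonally. Then a symplectic neighbourhood theorem for plumbings (Gompf, McLean--McRae) identifies a neighbourhood of $\cc$ with a plumbing of symplectic disc bundles $N_j \to C_j$. Near each node $C_j\cap C_k$ we fix Darboux coordinates $(z,w)$ with $C_j=\{w=0\}$ and $C_k=\{z=0\}$, in which $\omega$ is standard Kähler. Along $C_j$ away from the nodes we use a fibre coordinate $w$ of $N_j$, with a Hermitian connection, so that the model is Kähler to first order near the zero section. Shrinking is harmless, since the statement only asks for $\Sigma\subseteq\nu(\cc)$ for a given neighbourhood.

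\textbf{Step 2: the line bundle and section.} In the model we let $L=\co(D)$ be the complex line bundle with the tautological section $s_D$ that vanishes to order $a_j$ along $C_j$. Locally $s_D=w^{a_j}$ along $C_j$ and $s_D=z^{a_k}w^{a_j}$ near a node. The restriction $L|_{C_j}$ has degree
$$[D]\cdot[C_j]=a_jC_j^2+\sum_{k\neq j}a_k\,C_j\cdot C_k,$$
and by hypothesis this is $\geq 0$. So on each $C_j$ we can choose a smooth section $\sigma_j$ of $L|_{C_j}$ with the following properties. It has exactly $[D]\cdot[C_j]$ zeros, all transverse, positive, and away from the nodes. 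It is nowhere zero if the degree is $0$. Near each node it equals the constant $1$ in the trivialisation of Step 1. Extending $\sigma_j$ to the disc bundle by pullback and patching with cut-offs gives a section $\sigma$ of $L$. We then set
$$\Sigma:=\{s_D=\varepsilon\sigma\}$$
for small $\varepsilon>0$. This is locally $w^{a_j}=\varepsilon\sigma_j(z)$ along $C_j$, and $z^{a_k}w^{a_j}=\varepsilon$ near nodes, which is a smoothing of the node.

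\textbf{Step 3: verifications.} There are four things to check.
\begin{itemize}
\item \emph{Smoothness and location.} $\Sigma$ lies in $\nu(\cc)$ for $\varepsilon$ small, since $|w|\lesssim\varepsilon^{1/a_j}$.
\item \emph{Homology class.} $\Sigma$ is homologous to $s_D^{-1}(0)=D$, because it is the zero set of $s_D-\varepsilon\sigma$, which is homotopic to $s_D$ through sections transverse at infinity.
\item \emph{Intersections with $C_j$.} $\Sigma\cap C_j$ is $\{\sigma_j=0\}$, with intersections transverse and positive by the choice of $\sigma_j$. In particular $\Sigma\cap C_j=\emptyset$ when $[D]\cdot[C_j]=0$.
\item \emph{Symplecticity.} Near the nodes $\Sigma$ is a complex curve in a Kähler chart, hence symplectic. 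Along $C_j$ it is an $a_j$-fold branched multisection $w=(\varepsilon\sigma_j)^{1/a_j}$, branched exactly at the zeros of $\sigma_j$.
\end{itemize}

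\textbf{Main obstacle.} The hard step is showing that $\Sigma$ is symplectic near the branch points and in the cut-off regions, where the model is only approximately Kähler. There I would first make $\sigma_j$ holomorphic in suitable local coordinates near each of its zeros, so that $\Sigma$ is locally the complex curve $w^{a_j}=\varepsilon z$. Elsewhere I would use the rescaling $w\mapsto\varepsilon^{-1/a_j}w$. After rescaling, $\Sigma$ is a fixed multisection whose tangent planes are $C^0$-close to the horizontal distribution as $\varepsilon\to 0$, and horizontal planes are symplectic. To control the cut-off error I would use the fact that the derivative terms scale with positive powers of $\varepsilon$.
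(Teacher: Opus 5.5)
Your construction is the one the paper attributes to Mu{\~n}oz--Rojo in \cref{rmk:neigh_divisor_smoothing_connectedness}, so the approach is essentially the same. Both perturb the canonical section of $\co(D)$ on $\nu(\cc)$ by $\varepsilon$ times a section that vanishes transversely and positively at $[D]\cdot[C_j]$ points of each $C_j$ and is constant near the nodes. This gives the local models $w^{a_j}=\varepsilon z$ at branch points and $z^{a_k}w^{a_j}=\varepsilon$ at nodes, with symplecticity elsewhere coming from $C^1$-closeness to the $C_j$.

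One point to tighten is Step~1. Isotoping the $C_j$ to meet $\omega$-orthogonally changes their tangent planes at the nodes, so your \emph{moreover} clause is then proved only for the isotoped surfaces: the neck $z^{a_k}w^{a_j}=\varepsilon$ can meet an original $C_j$ near a node in extra, non-positive points. It is cleaner to keep the $C_j$ fixed and use node charts in which they are the coordinate axes and $\omega$ merely tames $i$. Such charts exist because the intersections are positive, and in them the neck is still a symplectic complex curve.
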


\begin{remark}
    \cref{thm:neigh_divisor_smoothing} can be viewed as a local symplectic analogue of Bertini's theorem in algebraic geometry. 
\end{remark}

\begin{remark}
\label{rmk:neigh_divisor_smoothing_connectedness}
    \cref{thm:neigh_divisor_smoothing} does not address the connectedness of the symplectic hypersurface that is constructed.
    However, investigating the proof of \cref{thm:neigh_divisor_smoothing} it is easy to deduce that it constructs a connected symplectic hypersurface, if $\cc$ is connected and $[D]^2>0$.
    
    The reason is the following: if $[D]^2>0$, then
    $$
    [D]^2 = [D] \cdot \Big(\sum\nolimits_j a_j [C_j] \Big) = \sum\nolimits_j a_j ([D]\cdot C_j),
    $$
    which means that $n_j:=[D]\cdot C_j >0$ for at least one $j$.
    Mu{\~n}oz and Rojo construct $\Sigma$ by first localising $X:=\nu(\cc)$ and then taking the canonical section $\sigma$ of the line bundle $L:=\co_X(D)$, whose zero set is the divisor $D$ and then perturbing it to $\sigma-\varepsilon s$.
    The section $s$ is chosen so that $s|_{C_j}$ vanishes at $n_j$ points on $C_j$ and such that $s$ is constant near the nodes.
    Near a zero of $s|_{C_j}$ the local perturbation looks like $w^{a_j}=\varepsilon z$ in local coordinates, where $z$ is the base coordinate and $w$ is the vertical coordinate in the tubular neighbourhood $X$.
    This connects the $a_j$ local sheets over $C_j$.
    Hence, for at least one component $C_j$ the multiple local sheets are connected together. 
    A similar argument then yields that over a node $C_i \cap C_k$ the sheets over $C_i$ are connected with those over $C_k$, and because $\cc$ is connected the connectedness property propagates from $C_j$.
\end{remark}

\begin{lemma}\label{lma:num_criterion_under_blow}
    Assume that $\cc = \cup_j C_j \subseteq (X,\omega)$ is a symplectic normal crossing divisor as in \cref{thm:neigh_divisor_smoothing} and that $D=\sum_j a_j C_j$ satisfies the numerical criterion of \cref{thm:neigh_divisor_smoothing}.
    If we assume that $\cc'=\cup_j C'_j$ is obtained from $\cc$ by blowing up a smooth point or a node, then there is a divisor $D'=\sum_j a'_j C_j'$ that still satisfies the numerical criterion of \cref{thm:neigh_divisor_smoothing} and has the same numerical invariants, i.e.\ Chern pairing and self-intersection number, as $D$.
\end{lemma}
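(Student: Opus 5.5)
The plan is to take $D'$ to be the total transform of $D$ and check the numerical criterion directly. Let $\pi\colon X'\to X$ be the blow-up, with exceptional sphere $E$. I will treat the two cases in parallel.

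\emph{Smooth point.} Suppose we blow up a smooth point of $C_r$. The proper transforms are $C'_j=\pi^*C_j$ for $j\neq r$ and $C'_r=\pi^*C_r-E$, and $E$ is added as a new component of $\cc'$. Set
\[
D':=\pi^*D=\sum_j a_j C'_j+a_rE .
\]

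\emph{Node.} Suppose we blow up the node $C_r\cap C_{r+1}$. Then $C'_r=\pi^*C_r-E$ and $C'_{r+1}=\pi^*C_{r+1}-E$. Set
\[
D':=\pi^*D=\sum_j a_jC'_j+(a_r+a_{r+1})E .
\]

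In both cases every coefficient is a positive integer, since $a_j>0$. The components of $\cc'$ are again symplectic spheres or surfaces meeting transversely and positively, because the blow-up can be performed symplectically in a toric or Darboux chart adapted to the normal crossing.

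\textbf{Numerical invariants.} Because $\pi^*$ preserves the intersection form, $[D']^2=[D]^2$. For the Chern pairing, $K_{X'}=\pi^*K_X+E$ and $E\cdot\pi^*D=0$. Hence $K_{X'}\cdot D'=K_X\cdot D$, and so $c_1\cdot D'$ is unchanged.

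\textbf{Criterion $[D']\cdot[C'_j]\ge 0$.}
For a component $C'_j=\pi^*C_j$ not touched by the blow-up, $D'\cdot C'_j=D\cdot C_j\ge 0$.
For the exceptional curve, $D'\cdot E=\pi^*D\cdot E=0$.
For a proper transform $C'_r=\pi^*C_r-E$ through the blown-up point,
\[
D'\cdot C'_r=D\cdot C_r-\pi^*D\cdot E=D\cdot C_r\ge 0 ,
\]
and the same computation applies to $C'_{r+1}$ in the node case.

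So the criterion holds with exactly the same numbers, plus the value $0$ on $E$. This is consistent with \cref{thm:neigh_divisor_smoothing}: the smoothing $\Sigma$ will then avoid $E$.

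\textbf{Main obstacle.} The step needing the most care is bookkeeping rather than mathematics. One must fix the convention that $D'$ is allowed to include $E$ as a component, which the statement permits since the $a'_j$ index the components of $\cc'$. One must also confirm that the symplectic blow-up keeps $\cc'$ a normal crossing configuration, so that \cref{thm:neigh_divisor_smoothing} applies. If we also want connectedness as in \cref{rmk:neigh_divisor_smoothing_connectedness}, note that $\cc'$ remains connected and $[D']^2=[D]^2>0$, so that property is preserved as well.
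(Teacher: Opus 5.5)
Your proposal is correct and follows essentially the same route as the paper: take $D'=\pi^*D$, with coefficient $a_r$ on $E$ for a smooth-point blow-up and $a_r+a_{r+1}$ for a node blow-up. The numerical criterion, $[D']^2=[D]^2$ and the Chern pairing then all follow from the total-transform identities $\pi^*A\cdot\pi^*B=A\cdot B$, $\pi^*A\cdot E=0$, $E^2=-1$, and $K_{X'}=\pi^*K_X+E$.
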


\begin{proof}
    This follows from the usual facts about blow-ups.
    Denote the blow-up by $\pi:\widetilde{X} \to X$, introducing the exceptional sphere $E$.
    Then we have 
    $$
    H_2(\widetilde{X};\Z) \cong \pi^* H_2(X;\Z) \oplus \Z[E],
    $$
    where $\pi^*A \cdot \pi^*B= A \cdot B$, $\pi^*A \cdot E= 0$ and $E^2=-1$ are the intersection identities for the total transform.
    Then $D':=\pi^*D$ is the divisor that satisfies the claim.
    
    Let us do the calculation by hand for blowing up a smooth point, i.e.\ we assume that $\cc'$ is obtained from $\cc$ by blowing up a smooth point on $C_i$. 
    Then 
    $$
    D'= \pi^*D= \sum\nolimits_{j} a_j \pi^*C_j=a_i E + \sum\nolimits_{j} a_j C'_j.
    $$
    Then the numerical criterion is immediate from the intersection identities, as well as $[D']^2=[D]^2$ and $c_1(\widetilde{X})\cdot D'=c_1(X)\cdot D$.
    If the blow-up is at a node $C_i \cap C_k$, then 
    $$
    D'= \pi^*D= \sum\nolimits_{j} a_j \pi^*C_j=(a_i + a_k) E + \sum\nolimits_{j} a_j C'_j.
    $$
    The rest of the proof is analogous.
\end{proof}

\begin{remark}\label{rmk:intersection_invariant}
    In the following it will also be important for us how the intersection profile of a divisor $D=\sum_j a_j C_j$ with a normal crossing divisor $\cc=\cup_j C_j$ changes under blow-ups. 
    Define the intersection profile to be the vector $\bm{\xi}$ with entries $\xi_j:= D \cdot C_j$, or equivalently $\bm{\xi}=M_\cc \bm{a}$, where $M_\cc :=(C_i\cdot C_j)_{i,j}$ is the intersection matrix of $\cc$.
    If we denote by $\cc'$ the normal crossing divisor obtained by one of the two kinds of blow-ups and insert the new exceptional divisor at the end, as above, the new intersection profile will be $\bm{\xi}'=M_{\cc'} \bm{a}'$, where $\bm{\xi}'=(\xi_1,\ldots,\xi_m,0)$.
\end{remark}

\begin{proof}[Proof of \cref{thm:quadric_complement}.]
    By \cref{thm:structure_regulation} there exists a configuration of symplectic spheres 
    $$
    \ct=(\cf_1,C_{i_\sect},\cf_2) \subseteq \widetilde{X},
    $$
    where $\cc=(C_1,\ldots,C_m) \subseteq \ct$ represents the interface between $U$ and $X\setminus U$.
    Moreover, the broken rulings $\cf_j$ blow down to two smooth fibres and this blow-down procedure can be arranged in such a way that the resulting configuration is $\cd=(F_1,\Sigma_-,F_2) \subseteq \C P^1 \times \C P^1$, where the $F_j$ are smooth fibres and $\Sigma_-$ is a section that squares to $-2$.
    The divisor $D=F_1 + \Sigma_- + F_2$ obviously satisfies the numerical criterion of \cref{thm:neigh_divisor_smoothing}:
    $$
    D \cdot F_j = 1 \quad\text{and}\quad D \cdot \Sigma_-=0.
    $$
    
    As an aside: we therefore obtain a positive section $\Sigma_+$ in a neighbourhood of the chain $\cd$.\footnote{Note that in this case one could have also constructed the $(+2)$-sphere "explicitly" as shown in a similar situation in \cref{fig:quadric_complement}.}
    However, it is unclear if this positive section survives the sequence of symplectic blow-ups.
    
    By \cref{lma:num_criterion_under_blow} and \cref{rmk:intersection_invariant} there exists a divisor, again denoted by $D$, in $\widetilde{X}$
    $$
    D = \sum\nolimits_j a_j C_j + b_1 E_1 + b_2 E_2
    $$
    that satisfies the numerical criterion of \cref{thm:neigh_divisor_smoothing} and satisfies $[D]^2=+2$.
    Therefore, \cref{thm:neigh_divisor_smoothing} constructs a symplectic hypersurface $S \subseteq \nu(\ct)$ that is disjoint from $\cc$ and intersects $E_j$ positively once.
    Moreover, \cref{rmk:neigh_divisor_smoothing_connectedness} implies that $S$ is connected, because $D^2=2$ and $\ct$ is connected, and because $S^2=+2$ and $c_1(\widetilde{X})\cdot S=+4$ the adjunction formula implies that $S$ is an embedded sphere.
    Hence, $S \subseteq \nu(\ct)\setminus \cc \subseteq X \setminus U$ is the embedded symplectic $(+2)$-sphere that we were looking for.
\end{proof}

\begin{remark}
    \cref{thm:quadric_complement} can also be proven with a more "classical" neck stretching approach.
    Note that the analogous theorem to \cref{thm:quadric_complement} in the case of regular ellipsoids is the theorem that shows that embedding ellipsoids into $\C P^2(1)$ is the same as embedding ellipsoids into $B^4(1)$.
\end{remark}

\section{Proving \texorpdfstring{$\ce_{2;1,1}=\ce^\sing_{2;1,1}$}{E211=Esing211}}
\label{sec:E211=E211sing}

We will show that $\ce_{2;1,1}=\ce^\sing_{2;1,1}$ by showing that embeddings of singular ellipsoids and $(2;1,1)$-ellipsoids can be normalised along their respective cores. 
In the case of singular ellipsoids this was done in \cref{lma:normalise_sing_emb} and in the case of $(2;1,1)$-ellipsoids this is a consequence of the nearby Lagrangian conjecture for $S^2$.
This means that we can, for the purpose of embedding problems, pass freely between the $A_1$-surface singularity and its smoothing $T^*S^2$.

\begin{lemma}\label{lma:normalise_cotangent}
    Assume that $f:E_{2;1,1}(\alpha,\beta) \hookrightarrow B_{2;1,1}(1)$ is a symplectic embedding.
    Then, after pre-composing with a symplectomorphism of the domain, there exists a Hamiltonian diffeomorphism $\psi \in \ham(B_{2;1,1}(1),\omega_\can)$ such that $\psi \circ f|_{B_{2;1,1}(\varepsilon)}$ agrees with the visible embedding $\iota_\vis$ for $\varepsilon >0$ sufficiently small.
\end{lemma}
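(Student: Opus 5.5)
Via the symplectomorphism of \cref{cor:211_ellipsoids} and \cref{rmk:BTS2}, I will view $f$ as a symplectic embedding $D^*(S^2,F^*_{\alpha,\beta})\hookrightarrow D^*(S^2,\tau\lvert\cdot\rvert_{g_0^*})$. Under these identifications the visible embedding $\iota_\vis$ restricted to $B_{2;1,1}(\varepsilon)$ becomes the inclusion of a small round codisc bundle $D^*(S^2,\tfrac{\tau}{\varepsilon}\lvert\cdot\rvert_{g_0^*})$ around the zero section, which lies inside both codisc bundles for $\varepsilon$ small. The core is the zero section $0_{S^2}$. Its image $L:=f(0_{S^2})$ is an exact Lagrangian sphere in $\Interior B_{2;1,1}(1)$: $f^*\lambda_\can-\lambda_\can$ is closed on the simply connected domain, hence exact, so $\lambda_\can|_L$ is exact.

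\textbf{Step 1 (move the core).} By the nearby Lagrangian conjecture for $S^2$ (Hind's theorem that exact Lagrangian spheres in $T^*S^2$ are Hamiltonian isotopic to the zero section), there is a Hamiltonian isotopy of $T^*S^2$ taking $L$ to $0_{S^2}$. The isotopy must stay inside $B_{2;1,1}(1)$. I will first use the Liouville flow to shrink everything into a slightly smaller codisc bundle, or run Hind's argument in the compactification $X$ using the $(+2)$-sphere at infinity, and then cut off the generating Hamiltonian near $\partial B_{2;1,1}(1)$. This produces $\psi_1\in\ham(B_{2;1,1}(1))$ with $\psi_1\circ f(0_{S^2})=0_{S^2}$.

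\textbf{Step 2 (fix the map on the core).} Now $g:=(\psi_1\circ f)|_{0_{S^2}}$ is a diffeomorphism of $S^2$. Every diffeomorphism of $S^2$ is isotopic either to the identity or to a reflection. The cotangent lift of a reflection is an isometry of the round metric, so it preserves the round codisc bundles. This is where the freedom to precompose with a domain symplectomorphism is used; if that lift does not preserve $D^*(S^2,F^*_{\alpha,\beta})$, I will compose it with the fibrewise antipodal map, which swaps $\alpha$ and $\beta$ appropriately, or apply the reflection on the target side. After this we may assume $g$ is isotopic to the identity, via Smale's theorem $\mathrm{Diff}^+(S^2)\simeq SO(3)$. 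Its cotangent lift then gives a Hamiltonian isotopy preserving each round codisc bundle (again cut off near the boundary), after which $\psi_2\circ\psi_1\circ f$ is the identity on $0_{S^2}$.

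\textbf{Step 3 (fix a neighbourhood).} Two symplectic embeddings agreeing on a Lagrangian are, by Weinstein's neighbourhood theorem, related near it by a symplectomorphism whose linearisation along $0_{S^2}$ is $(q,p)\mapsto(q,p+A_q p)$ with $A$ symmetric. I will linearly interpolate this to the identity. Since $f$ is exact, the associated closed $1$-form is exact, so the interpolation is generated by a compactly supported Hamiltonian. An Alexander-type rescaling by the Liouville flow then makes the composite agree with the inclusion on $B_{2;1,1}(\varepsilon)$. I expect the main obstacle to be Step 1, namely keeping Hind's Hamiltonian isotopy inside the bounded domain $B_{2;1,1}(1)$ rather than merely inside $T^*S^2$, together with the exactness bookkeeping needed so that every isotopy is Hamiltonian and not just symplectic.
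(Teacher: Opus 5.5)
Your proposal is correct and follows the paper's route. You move the core $f(0_{S^2})$ to the zero section by uniqueness of Lagrangian spheres up to Hamiltonian isotopy, precompose with a domain symplectomorphism to fix the orientation of the core, and finish with a relative Weinstein--Moser argument; the paper simply cites Hind and Li--Wu directly for $D^*(S^2,\tau\lvert\cdot\rvert_{g_0^*})$, whereas your Liouville-rescaling argument is a valid way to keep the isotopy inside the bounded domain. Two details need correcting: the domain symplectomorphism should be the cotangent lift of the equatorial reflection $z\mapsto -z$, which commutes with the rotation and therefore preserves both $H$ and $J$, hence $D^*(S^2,F^*_{\alpha,\beta})$ (your fallback via the fibrewise antipodal map is anti-symplectic, and a reflection on the target side would not be Hamiltonian), and the linearisation along $0_{S^2}$ of a symplectic map fixing it pointwise is a shear $(v,p)\mapsto(v+B_qp,p)$ with $B_q$ symmetric, not $p\mapsto p+A_qp$, though it still interpolates linearly to the identity through symplectic maps as you intend.
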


\begin{proof}
    Choose $\delta>0$ small enough that $B_{2;1,1}(\delta) \subseteq E_{2;1,1}(\alpha,\beta)$.
    Then we can view $f$ as a symplectic embedding $f:D^*(S^2,\frac{\tau}{\delta}\lvert \cdot \rvert_{g_0^*}) \hookrightarrow D^*(S^2,\tau\lvert \cdot \rvert_{g^*_0})$.
    Lagrangian spheres in $D^*(S^2,\tau \lvert \cdot \rvert_{g^*_0})$ are unique up to Hamiltonian isotopy by \cite{Hi12,LiWu12} and therefore there exists a Hamiltonian diffeomorphism $\phi$ such that $\phi \circ f$ is the identity on $0_{S^2}$, after possibly pre-composing with a symplectomorphism of the domain. 
    Now, by the relative Weinstein--Moser trick we can find a Hamiltonian diffeomorphism $\varphi$, supported in a small neighbourhood of $0_{S^2}$, such that $g:=\varphi \circ \phi \circ f$ is the identity on $D^*(S^2,\frac{\tau}{\varepsilon}\lvert \cdot \rvert_{g^*_0})$ for $\varepsilon >0$ small enough and hence $\psi:=\varphi \circ \phi$ is the Hamiltonian diffeomorphism we were looking for.
    In particular, this means that $g$ coincides with the visible embedding $\iota_\vis$ on $B_{2;1,1}(\varepsilon)$.
\end{proof}

\begin{corollary}
    We have that $\ce_{2;1,1}=\ce^\sing_{2;1,1}$.
\end{corollary}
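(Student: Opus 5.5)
The plan is to prove both inclusions by exchanging the ``cores'' of normalised embeddings. Both embedding problems share the same outer region: by \cref{rmk:moment_image_singular} and the definition of $E_{T^*S^2}$, the complements $E_{2;1,1}(\alpha,\beta)\setminus B_{2;1,1}(\varepsilon)$ and $E^\sing_{2;1,1}(\alpha,\beta)\setminus B^\sing_{2;1,1}(\varepsilon)$ are fibred over the same region $\Delta_{2;1,1}(\alpha,\beta)\setminus\Delta_{2;1,1}(\varepsilon,\varepsilon)$ in the same integral affine structure. I will take the branch cut and nodes inside $\Delta_{2;1,1}(\varepsilon,\varepsilon)$ after a nodal slide. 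Hence there is a fibred symplectomorphism
\[
\Psi_{\alpha,\beta}\colon E_{2;1,1}(\alpha,\beta)\setminus B_{2;1,1}(\varepsilon)\to E^\sing_{2;1,1}(\alpha,\beta)\setminus B^\sing_{2;1,1}(\varepsilon),
\]
compatible with the inclusions for different parameters, in particular with the target $(1,1)$. This is the same input as in the proof of \cref{prop:ETS2_E211}, via \cite[Chapter 2]{Ev23:book}.

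For $\ce_{2;1,1}\subseteq\ce^\sing_{2;1,1}$, I start from $f\colon E_{2;1,1}(\alpha,\beta)\hookrightarrow B_{2;1,1}(1)$. I first use the Liouville scaling and the $\varepsilon$-room built into the definition to shrink the domain slightly; this is harmless since both sets are open conditions up to the usual closure convention. By \cref{lma:normalise_cotangent} I replace $f$ by $\psi\circ f$ (precomposed with a symplectomorphism of the domain), which equals $\iota_\vis$ on $B_{2;1,1}(\varepsilon)$. Then I define
\[
g:=\Psi_{1,1}\circ f\circ\Psi_{\alpha,\beta}^{-1}\quad\text{outside }B^\sing_{2;1,1}(\varepsilon),\qquad g:=\text{inclusion}\quad\text{on }B^\sing_{2;1,1}(\varepsilon).
\]
Near $\partial B^\sing_{2;1,1}(\varepsilon)$ the first formula is $\Psi_{1,1}\circ\Psi_{\alpha,\beta}^{-1}$, which is the inclusion by compatibility, so the two pieces glue smoothly. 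The map $g$ is an injective symplectic map on the regular locus that is the identity near the orbifold point, so it is a symplectic orbifold embedding.

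The reverse inclusion runs symmetrically. I use \cref{lma:normalise_sing_emb} to get an orbifold embedding that is the standard inclusion on $B^\sing_{2;1,1}(\varepsilon)$, conjugate the outer part by the $\Psi$'s, and glue in the identity on $B_{2;1,1}(\varepsilon)\subseteq T^*S^2$.

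The main obstacle is the precomposition with a domain symplectomorphism in \cref{lma:normalise_cotangent}. That symplectomorphism must preserve $E_{2;1,1}(\alpha,\beta)$, or at least allow the normalised map to remain an embedding of the full domain. It arises from matching the preimage Lagrangian sphere $f^{-1}(0_{S^2})$ with the zero section. I would first try to arrange this by choosing $\delta$ small and applying the nearby Lagrangian uniqueness in the domain itself, with a Hamiltonian isotopy compactly supported in $\Interior E_{2;1,1}(\alpha,\beta)$, which works because the domain is also a codisc bundle of $S^2$. A secondary point is making sure $\psi$ keeps the image inside $B_{2;1,1}(1)$. For this I shrink the target slightly and cut off the Hamiltonian, which is legitimate by the openness of the embedding condition.
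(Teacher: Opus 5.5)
Your proposal is correct and is essentially the paper's proof: you normalise near the core with \cref{lma:normalise_cotangent} and \cref{lma:normalise_sing_emb}, then exchange cores using the fibred symplectomorphisms between the complements of $B_{2;1,1}(\varepsilon)$ and $B^\sing_{2;1,1}(\varepsilon)$, which exist because the moment maps agree there. The obstacle you flag is already handled by \cref{lma:normalise_cotangent} as stated, since the precomposition there is by a symplectomorphism of $E_{2;1,1}(\alpha,\beta)$ itself and it does not match $f^{-1}(0_{S^2})$ with the zero section; it only corrects the parametrisation of the zero section (when this is orientation-reversing, e.g.\ by the cotangent lift of the equatorial reflection $\theta\mapsto 1/2-\theta$, which preserves $H$ and $J$).
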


\begin{proof}
    This is a direct consequence of \cref{lma:normalise_sing_emb} and \cref{lma:normalise_cotangent}.
    Assume that
    $$
    f:E^\sing_{2;1,1}(\alpha,\beta) \xhookrightarrow{s} B^\sing_{2;1,1}(1)
    $$
    is an embedding.
    By \cref{lma:normalise_sing_emb} we can assume that it coincides with the inclusion on $B^\sing_{2;1,1}(\varepsilon)$ for $\varepsilon >0$ sufficiently small.
    This means we obtain an embedding 
    $$
    \widehat{f}:E_{2;1,1}(\alpha,\beta) \xhookrightarrow{s} B_{2;1,1}(1)
    $$
    by gluing $\iota_\vis$ on $B_{2;1,1}(\varepsilon)$ with $f$.
    This is done by using the symplectomorphisms:
    $$
    E^\sing_{2;1,1}(\alpha,\beta)\setminus B^\sing_{2;1,1}(\varepsilon) \to E_{2;1,1}(\alpha,\beta)\setminus B_{2;1,1}(\varepsilon)
    $$
    and the analogous one on the target.
    These symplectomorphisms exist because the moment maps on these regions coincide. See for example \cite[Chapter 2]{Ev23:book}.
    The other direction follows analogously.
\end{proof}

\begin{remark}
    It is clear from the method of proof that the results of this section can be extended to the general $(p,q)$-case by using the results in \cite{AdaBaHau26}.
\end{remark}

\section{Some comments on the \texorpdfstring{$\R P^2$}{RP2} case}
\label{sec:RP2_case}

As mentioned in the introduction, the staircase for the $(2,1)$-embedding problem, i.e.\ the computation of the set $\ca_{2,1}$, introduced in \eqref{eq:def_A21}, was proven in \cite[Theorem 1.5.2]{ABEHS25}, which corresponds to \cref{sec:A211_staircase} here.
The interpretation of the $E_{2,1}(\alpha,\beta)$ in terms of codisc bundles with respect to Randers metrics was discussed in \cref{rmk:Randers_interpretation_RP2}.
Proving that $\ca_{2,1}=\ce_{2,1}$ follows by arguments analogous to the ones given in \cref{sec:A211=E211}.
The difference is that the open unit codisc bundle of $\R P^2$ with respect to the round metric compactifies to $\C P^2$, see for example \cite{Ada25}, and so the analogue of \cref{thm:quadric_complement} is that in the complement of an embedded $E_{2,1}(\alpha,\beta)$ in $\C P^2(2)$ there is always an embedded symplectic $(+4)$-sphere.
Uniqueness of such spheres in $\C P^2$ then proves $\ca_{2,1}=\ce_{2,1}$.
Finally, the arguments in \cref{sec:E211=E211sing} again follow by the same techniques but one has to appeal to the uniqueness of Lagrangian $\R P^2$s in $T^*\R P^2$, which is proven in \cite{HiPiWu16,AdaBaHau26,Ada25}.
This then shows $\ce_{2,1}=\ce^\sing_{2,1}=\ce^\eq_{2,1}$, where the last equality follows from \cref{lma:eq=sing}.

\emergencystretch=2.5em
\printbibliography

@article{AdHa25,
author = {Adaloglou, N. and Hauber, J.},
title = {Pinwheels in symplectic rational and ruled surfaces and non-squeezing of rational homology balls},
journal = {Journal of Topology},
volume = {19},
number = {3},
pages = {e70094},
doi = {https://doi.org/10.1112/topo.70094},
url = {https://londmathsoc.onlinelibrary.wiley.com/doi/abs/10.1112/topo.70094},
year = {2026}
}

@misc{ABEHS25,
    title={Markov staircases}, 
    author={N. Adaloglou and J. Brendel and J. Evans and J. Hauber and F. Schlenk},
    year={2025},
    eprint={2509.03224},
    archivePrefix={arXiv},
    primaryClass={math.SG},
    url={https://arxiv.org/abs/2509.03224}, 
}

@article{Ra04,
    author = {Rademacher, H.-B.},
    title = {A sphere theorem for non-reversible {Finsler} metrics},
    fjournal = {Mathematische Annalen},
    journal = {Math. Ann.},
    issn = {0025-5831},
    volume = {328},
    number = {3},
    pages = {373--387},
    year = {2004},
    language = {English},
    doi = {10.1007/s00208-003-0485-y},
    zbMATH = {2078020},
    Zbl = {1050.53063}
}

@article{Ka74,
    author = {Katok, A. B.},
    title = {Ergodic perturbations of degenerate integrable {Hamiltonian} systems},
    fjournal = {Mathematics of the USSR. Izvestiya},
    journal = {Math. USSR, Izv.},
    issn = {0025-5726},
    volume = {7},
    pages = {535--571},
    year = {1974},
    language = {English},
    doi = {10.1070/IM1973v007n03ABEH001958},
    zbMATH = {3495405},
    Zbl = {0316.58010}
}

@article{Zi83,
    author = {Ziller, W.},
    title = {Geometry of the {Katok} examples},
    fjournal = {Ergodic Theory and Dynamical Systems},
    journal = {Ergodic Theory Dyn. Syst.},
    issn = {0143-3857},
    volume = {3},
    pages = {135--157},
    year = {1983},
    language = {English},
    doi = {10.1017/S0143385700001851},
    zbMATH = {3892261},
    Zbl = {0559.58027}
}

@article{Sh02,
    author = {Shen, Z.},
    title = {Two-dimensional {Finsler} metrics with constant flag curvature},
    fjournal = {Manuscripta Mathematica},
    journal = {Manuscr. Math.},
    issn = {0025-2611},
    volume = {109},
    number = {3},
    pages = {349--366},
    year = {2002},
    language = {English},
    doi = {10.1007/s00229-002-0311-y},
    zbMATH = {1889165},
    Zbl = {1027.53093}
}

@article{HaPa08,
    author = {Harris, A. and Paternain, G.},
    title = {Dynamically convex {Finsler} metrics and {{\(J\)}}-holomorphic embedding of asymptotic cylinders},
    fjournal = {Annals of Global Analysis and Geometry},
    journal = {Ann. Global Anal. Geom.},
    issn = {0232-704X},
    volume = {34},
    number = {2},
    pages = {115--134},
    year = {2008},
    language = {English},
    doi = {10.1007/s10455-008-9111-2},
    zbMATH = {5324305},
    Zbl = {1149.53045}
}

@article{BaRoSh04,
    author = {Bao, D. and Robles, C. and Shen, Z.},
    title = {Zermelo navigation on {Riemannian} manifolds},
    fjournal = {Journal of Differential Geometry},
    journal = {J. Differ. Geom.},
    issn = {0022-040X},
    volume = {66},
    number = {3},
    pages = {377--435},
    year = {2004},
    language = {English},
    doi = {10.4310/jdg/1098137838},
    zbMATH = {2143427},
    Zbl = {1078.53073}
}

@book{Ev23:book,
    author = {Evans, J.},
    title = {Lectures on {Lagrangian} torus fibrations},
    fseries = {London Mathematical Society Student Texts},
    series = {Lond. Math. Soc. Stud. Texts},
    issn = {0963-1631},
    volume = {105},
    isbn = {978-1-00-937263-3},
    year = {2023},
    publisher = {Cambridge: Cambridge University Press},
    language = {English},
    doi = {10.1017/9781009372671},
    zbMATH = {7689805},
    Zbl = {1528.53001}
}

@incollection{Sym03:four_two,
    author = {Symington, M.},
    title = {Four dimensions from two in symplectic topology},
    booktitle = {Topology and geometry of manifolds. Proceedings of the 2001 Georgia topology conference, University of Georgia, Athens, GA, USA, May 21--June 2, 2001},
    isbn = {0-8218-3507-6},
    pages = {153--208},
    year = {2003},
    publisher = {Providence, RI: American Mathematical Society (AMS)},
    language = {English},
    zbMATH = {2065294},
    Zbl = {1049.57016}
}

@article {Gro85,
    AUTHOR = {Gromov, M.},
    TITLE = {Pseudo holomorphic curves in symplectic manifolds},
    JOURNAL = {Invent. Math.},
    FJOURNAL = {Inventiones Mathematicae},
    VOLUME = {82},
    YEAR = {1985},
    NUMBER = {2},
    PAGES = {307--347},
    ISSN = {0020-9910},
    MRCLASS = {53C15 (32F25 53C57 57R15)},
    MRNUMBER = {809718},
    MRREVIEWER = {Yakov Eliashberg},
    DOI = {10.1007/BF01388806},
    URL = {https://doi.org/10.1007/BF01388806},
}

@article{Aud07,
     author = {Audin, M.},
     title = {Lagrangian skeletons, periodic geodesic flows and symplectic cuttings},
     fjournal = {Manuscripta Mathematica},
     journal = {Manuscr. Math.},
     issn = {0025-2611},
     volume = {124},
     number = {4},
     pages = {533--550},
     year = {2007},
     language = {English},
     doi = {10.1007/s00229-007-0134-y},
     zbMATH = {5248331},
     Zbl = {1132.53042}
}

@article{OaUs16,
    author = {Oakley, J. and Usher, M.},
    title = {On certain {Lagrangian} submanifolds of {{\(S^2\times S^2\)}} and {{\(\mathbb{C}\operatorname{P}^n\)}}},
    fjournal = {Algebraic \& Geometric Topology},
    journal = {Algebr. Geom. Topol.},
    issn = {1472-2747},
    volume = {16},
    number = {1},
    pages = {149--209},
    year = {2016},
    language = {English},
    doi = {10.2140/agt.2016.16.149},
    zbMATH = {6553357},
    Zbl = {1335.53105}
}

@misc{MuRo25,
    author = {Mu{\~n}oz, V. and Rojo, J.},
    title = {Constructions of symplectic surfaces in symplectic 4-manifolds with transversal intersections},
    year = {2025},
    howpublished = {Preprint, {arXiv}:2503.12428 [math.{SG}] (2025)},
    url = {https://arxiv.org/abs/2503.12428},
    arXiv = {arXiv:2503.12428}
}

@misc{Ev24:KIAS,
    author = {Evans, J.},
    title = {{KIAS} {Lectures} on {Symplectic} {Aspects} of {Degenerations}},
    year = {2024},
    howpublished = {Preprint, {arXiv}:2403.03519 [math.{SG}]},
    url = {https://arxiv.org/abs/2403.03519},
    arXiv = {arXiv:2403.03519}
}

@article{HiPiWu16,
    author = {R. Hind and M. Pinsonnault and W. Wu},
    title = {Symplectormophism groups of non-compact manifolds, orbifold balls, and a space of {Lagrangians}},
    fjournal = {The Journal of Symplectic Geometry},
    journal = {J. Symplectic Geom.},
    issn = {1527-5256},
    volume = {14},
    number = {1},
    pages = {203--226},
    year = {2016},
    language = {English},
    doi = {10.4310/JSG.2016.v14.n1.a8},
    zbMATH = {6623433},
    Zbl = {1355.57026}
}

@book{BaChSh00,
    author = {Bao, D. and Chern, S.-S. and Shen, Z.},
    title = {An introduction to {Riemann}-{Finsler} geometry},
    fseries = {Graduate Texts in Mathematics},
    series = {Grad. Texts Math.},
    issn = {0072-5285},
    volume = {200},
    isbn = {0-387-98948-X},
    year = {2000},
    publisher = {New York, NY: Springer},
    language = {English},
    zbMATH = {1001612},
    Zbl = {0954.53001}
}

@article{AbSaSch23,
    author = {Abbondandolo, A. and Alves, Marcelo R. R. and Sa{\u{g}}lam, M. and Schlenk, F.},
    title = {Entropy collapse versus entropy rigidity for {Reeb} and {Finsler} flows},
    fjournal = {Selecta Mathematica. New Series},
    journal = {Sel. Math., New Ser.},
    issn = {1022-1824},
    volume = {29},
    number = {5},
    pages = {99},
    note = {Id/No 67},
    year = {2023},
    language = {English},
    doi = {10.1007/s00029-023-00865-8},
    zbMATH = {7733768},
    Zbl = {1527.37037}
}

@article{Ch96,
    author = {Chern, S.-S.},
    title = {Finsler geometry is just {Riemannian} geometry without the quadratic restriction.},
    fjournal = {Notices of the American Mathematical Society},
    journal = {Notices Am. Math. Soc.},
    issn = {0002-9920},
    volume = {43},
    number = {9},
    pages = {959--963},
    year = {1996},
    language = {English},
    zbMATH = {1700702},
    Zbl = {1044.53512}
}

@book{Ber03,
    author = {Berger, M.},
    title = {A panoramic view of {Riemannian} geometry},
    isbn = {3-540-65317-1},
    year = {2003},
    publisher = {Berlin: Springer},
    language = {English},
    doi = {10.1007/978-3-642-18245-7},
    zbMATH = {1973376},
    Zbl = {1038.53002}
}

@article{McDSie25,
    author = {McDuff, D. and Siegel, K.},
    title = {Singular algebraic curves and infinite symplectic staircases},
    fjournal = {Inventiones Mathematicae},
    journal = {Invent. Math.},
    issn = {0020-9910},
    volume = {242},
    number = {2},
    pages = {387--459},
    year = {2025},
    language = {English},
    doi = {10.1007/s00222-025-01359-4},
    zbMATH = {8103859}
}

@article{Hi12,
    author = {Hind, R.},
    title = {Lagrangian unknottedness in {Stein} surfaces},
    fjournal = {The Asian Journal of Mathematics},
    journal = {Asian J. Math.},
    issn = {1093-6106},
    volume = {16},
    number = {1},
    pages = {1--36},
    year = {2012},
    language = {English},
    doi = {10.4310/AJM.2012.v16.n1.a1},
    zbMATH = {6050962},
    Zbl = {1262.53073},
}

@article{LiWu12,
    author = {Li, T.-J. and Wu, W.},
    title = {Lagrangian spheres, symplectic surfaces and the symplectic mapping class group},
    fjournal = {Geometry \& Topology},
    journal = {Geom. Topol.},
    issn = {1465-3060},
    volume = {16},
    number = {2},
    pages = {1121--1169},
    year = {2012},
    language = {English},
    doi = {10.2140/gt.2012.16.1121},
    zbMATH = {6068622},
    Zbl = {1253.53073},
}

@misc{AdaBaHau26,
    author = {Adaloglou, N. and Bargall{\'o} i G{\'o}mez, G. and Hauber, J.},
    title = {The nearby {Lagrangian} conjecture for pinwheels},
    year = {2026},
    howpublished = {Preprint, {arXiv}:2605.22473 [math.{SG}] (2026)},
    url = {https://arxiv.org/abs/2605.22473},
    arXiv = {arXiv:2605.22473}
}

@article{McDSch12,
    author = {McDuff, D. and Schlenk, F.},
    title = {The embedding capacity of 4-dimensional symplectic ellipsoids},
    fjournal = {Annals of Mathematics. Second Series},
    journal = {Ann. Math. (2)},
    issn = {0003-486X},
    volume = {175},
    number = {3},
    pages = {1191--1282},
    year = {2012},
    language = {English},
    doi = {10.4007/annals.2012.175.3.5},
    zbMATH = {6051270},
    Zbl = {1254.53111}
}

@article{Sch18,
    author = {Schlenk, F.},
    title = {Symplectic embedding problems, old and new},
    fjournal = {Bulletin of the American Mathematical Society. New Series},
    journal = {Bull. Am. Math. Soc., New Ser.},
    issn = {0273-0979},
    volume = {55},
    number = {2},
    pages = {139--182},
    year = {2018},
    language = {English},
    doi = {10.1090/bull/1587},
    zbMATH = {7175361},
    Zbl = {1464.53106}
}

@article{Ada25,
    author = {Adaloglou, N.},
    title = {Uniqueness of {Lagrangians} in {{\(T^*{\mathbb{R}}P^2\)}}},
    fjournal = {Annales Math{\'e}matiques du Qu{\'e}bec},
    journal = {Ann. Math. Qu{\'e}.},
    issn = {2195-4755},
    volume = {49},
    number = {1},
    pages = {215--222},
    year = {2025},
    language = {English},
    doi = {10.1007/s40316-024-00238-3},
    zbMATH = {8030618},
    Zbl = {1564.53074}
}

@misc{Car19,
    author = {F. C. Caramello},
    title = {Introduction to orbifolds},
    year = {2019},
    howpublished = {Preprint, {arXiv}:1909.08699 [math.{DG}] (2019)},
    url = {https://arxiv.org/abs/1909.08699},
    arXiv = {arXiv:1909.08699}
}

@book{AdLeRu07,
    author = {Adem, A. and Leida, J. and Ruan, Y.},
    title = {Orbifolds and stringy topology},
    fseries = {Cambridge Tracts in Mathematics},
    series = {Camb. Tracts Math.},
    issn = {0950-6284},
    volume = {171},
    isbn = {978-0-521-87004-7},
    year = {2007},
    publisher = {Cambridge: Cambridge University Press},
    language = {English},
    zbMATH = {5234797},
    Zbl = {1157.57001}
}

@article{Fer24,
    author = {Ferreira, B.},
    title = {Elliptic {Reeb} orbit on some real projective three-spaces via {ECH}},
    fjournal = {Revista Matem{\'a}tica Iberoamericana},
    journal = {Rev. Mat. Iberoam.},
    issn = {0213-2230},
    volume = {40},
    number = {5},
    pages = {1833--1862},
    year = {2024},
    language = {English},
    doi = {10.4171/RMI/1480},
    zbMATH = {7927755},
    Zbl = {1555.53131}
}

@article{FerRam22,
    author = {Ferreira, B. and Ramos, V. G. B.},
    title = {Symplectic embeddings into disk cotangent bundles},
    fjournal = {Journal of Fixed Point Theory and Applications},
    journal = {J. Fixed Point Theory Appl.},
    issn = {1661-7738},
    volume = {24},
    number = {3},
    pages = {31},
    note = {Id/No 62},
    year = {2022},
    language = {English},
    doi = {10.1007/s11784-022-00979-0},
    zbMATH = {7591740},
    Zbl = {1529.53081}
}

@unpublished{HaSch26,
    author = {Hauber, J. and Schlenk, F. and Ramos, V. G. B.},
    title  = {The Gromov width of Katok ellipsoids},
    note   = {Forthcoming}
}

@book{McDSal16,
    author = {McDuff, D. and Salamon, D.},
    title = {Introduction to symplectic topology},
    edition = {3rd edition},
    fseries = {Oxford Graduate Texts in Mathematics},
    series = {Oxf. Grad. Texts Math.},
    volume = {27},
    isbn = {978-0-19-879489-9},
    year = {2016},
    publisher = {Oxford: Oxford University Press},
    language = {English},
    zbMATH = {6638013},
    Zbl = {1380.53003}
}

\end{document}